\newcommand{\HP}{\mathbb{H}}
\newcommand{\R}{\mathbb{R}}
\newcommand{\Z}{\mathbb{Z}}
\newcommand{\Card}{\mathrm{Card\,}}
\newcommand{\Cay}{\mathrm{Cay}}
\newcommand{\codim}{\mathrm{codim}}
\newcommand{\Max}{\mathrm{Max}}
\newcommand{\PSL}{{\mathrm {PSL}}}
\newcommand{\Sys}{{\mathrm {Sys}}\,}
\newcommand{\Tess}{{\mathrm {Tess}}}
\newcommand{\Tr}{{\mathrm {Tr}}}
\newcommand{\arcosh}{\mathrm{arcosh}}
\newcommand{\Curv}{\mathrm{Curv}}
\newcommand{\Syst}{\mathrm{Syst}}
\renewcommand{\d}{\mathrm{d}}
\newtheorem{thm}{Theorem}
\newtheorem{cor}[thm]{Corollary}
\newtheorem{lemma}[thm]{Lemma}
\newtheorem*{thm*}{Theorem}
\newtheorem*{citethm}{Theorem}
\newtheorem*{Main1}{Theorem 1}
\newtheorem*{crit*}{Criterion}
\newcommand{\ch}{\operatorname{ch}}
\newcommand{\param}{{\mathchoice{\mkern1mu\mbox{\raise2.2pt\hbox{$
\centerdot$}}
\mkern1mu}{\mkern1mu\mbox{\raise2.2pt\hbox{$\centerdot$}}\mkern1mu}{
\mkern1.5mu\centerdot\mkern1.5mu}{\mkern1.5mu\centerdot\mkern1.5mu}}}
\renewcommand{\setminus}{{\smallsetminus}}
\begin{document}
\title{Estimating the dimension of the Thurston Spine}
\author{Olivier Mathieu}

\address{SUSTech International Center for Mathematics\\
Southern University of Science and Technology\\Shenzhen, China
}
\address{Institut  Camille Jordan du CNRS\\
UCBL, 69622 Villeurbanne Cedex, France
}

\email{mathieu@math.univ-lyon1.fr}

\today

\begin{abstract}
For $g\geq 2$, the Thurston spine $\mathcal{P}_{g}$ is the subspace of Teichm\"uller space $\mathcal{T}_{g}$, consisting of the marked surfaces for which the set of shortest curves, the systoles, cuts the surface into polygons. Our main result is the existence of an
infinite set $A$ of
integers $g\geq 2$ such that

\centerline{$\codim\, \mathcal{P}_g\in o(g/\sqrt{\log g})$,}

\noindent when $g\in A$ goes to $\infty$. This proves  the recent conjecture of M. Fortier Bourque.
\end{abstract}

\maketitle

{\footnotesize
\tableofcontents
}

\section*{Introduction}
\label{sect:intro}


\noindent{\it 0.1 General Introduction}

\noindent  Let $g\geq 2$.
The Teichm\"uller space $\mathcal{T}_g$ is the space of all marked closed hyperbolic surfaces of genus $g$.
(Precise definitions used in the introduction can be found in Section \ref{sub:defns}.)
It is a smooth variety homeomorphic to $\R^{6g-6}$, see e.g. \cite{Bu}\cite{FM}, on which the mapping class group $\Gamma_g$ acts properly. 
 By Harer's Theorem
\cite{H}, $\Gamma_g$ has virtual cohomological dimension
$4g-5$. This leads to the question, raised in
\cite{BV}-can we find an equivariant deformation retraction
of $\mathcal{T}_g$ onto a subcomplex  of dimension
$4g-5$, or equivalently, of codimension $2g-1$?

In a remarkable note \cite{T85}, Thurston considered the subspace
${\mathcal P}_g\subset \mathcal{T}_g$ consisting of
marked surfaces for which the systoles
{\it fill} the surface, i.e. the systoles cut the surface into polygons.  In {\it loc. cit.}, he proved
\footnote{In \cite{LiJi},
some doubts have been raised about Thurston's proof. The results stated below were clearly motivated by his note \cite{T85}, but their proofs are independent of {\it loc. cit.}.  So,  we will not discuss here if the main result of \cite{T85} is proved or not.} that
$\mathcal{P}_g$ is an equivariant deformation retract of $\mathcal{T}_g$. Since, $\mathcal{P}_g$ is called the Thurston spine. It follows from
\cite{T85} that $\dim \mathcal{P}_g\geq 4g-5$, or,
equivalently, that
$\codim \mathcal{P}_g\leq 2g-1$.
 
It was shown in Theorem 44 of \cite{SS} and verified by a Sage computation in
\cite{Calculation} that $\dim \mathcal{P}_{2}=3$, which is the virtual cohomological dimension of $\Gamma_2$. Therefore, one could have expected that
$\mathcal{P}_g$ has  codimension
$2g-1$ for all $g$. In that direction, P. Schmutz
Schaller provided examples of surfaces of genus $g$
which are cut by a minimal set of $2g$ systoles. 
(We could expect that  ${\mathcal P}_g$ has locally codimension $2g-1$, as it will be explained in Subsection 0.2.) 
A year ago, the breakthrough  paper \cite{FB23} showed that $\codim \mathcal{P}_g<2g-1$,  for infinitely many $g$. Nevertheless,  I. Irmer proved that $\mathcal{P}_g$ admits a $\Gamma_g$-equivariant deformation retract into a subcomplex of minimal dimension $4g-5$, see \cite{Me}. \\

More precisely, M. Fortier Bourque proved  in \cite{FB23}  that 
 
 \centerline{$\liminf_{g\to\infty} {\codim\, P_g/ g}\leq 1$.}
 
\noindent
Moreover  he had conjectured earlier \cite{FB20}  that

\centerline{$\liminf_{g\to\infty}\codim \mathcal{P}_g/g=0$.} 

\noindent Our  paper provides a proof of his conjecture with, in addition, some explicit bound.

\begin{thm} 
\label{boundthm}
There exists an infinite set $A$ of integers $g\geq 2$ such that

\centerline{$\codim\, \mathcal{P}_g <
{38\over \sqrt{\ln\ln\ln g}}\,\,
{g\over \sqrt{ \ln g}}$,}

\noindent for any $g\in A$.
\end{thm}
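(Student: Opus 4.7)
The plan is to exhibit, for infinitely many $g$, a marked surface $X\in\mathcal{T}_g$ whose systoles $\gamma_1,\ldots,\gamma_N$ fill $X$ and whose length differentials $d\ell_{\gamma_i}(X)$ span a subspace of $T^*_X\mathcal{T}_g$ of dimension less than the claimed bound. The reduction to this linear-algebraic task is standard: by a theorem of Akrout, if the $\gamma_i$ form a filling set of systoles at $X$, then the local codimension of $\mathcal{P}_g$ at $X$ equals the dimension of $\mathrm{span}_{\R}\{d\ell_{\gamma_i}(X)\}$ inside $T^*_X\mathcal{T}_g\cong Q(X)$, the space of holomorphic quadratic differentials on $X$. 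So the task becomes to find surfaces carrying many filling systoles whose length differentials exhibit many linear dependencies.

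The construction I propose is an equivariant one: take finite normal covers $\pi:X\to Y$ of a carefully chosen hyperbolic base surface $Y$ of small genus, where $Y$ carries a short simple closed geodesic $\gamma_0$ whose preimages in $X$ are the prospective systoles. The deck group $\Gamma$ acts by isometries on $X$ and permutes the lifts of $\gamma_0$ transitively, so the span $V:=\mathrm{span}\{d\ell_{\gamma_i}\}$ is a $\Gamma$-invariant subspace of $Q(X)$. Its dimension is then controlled by identifying which $\Gamma$-isotypic components of $Q(X)$ can receive a nonzero contribution from the length differentials, and by bounding the dimensions of those isotypic components; character theory of $\Gamma$, together with an equivariant Riemann--Roch or Chevalley--Weil calculation of $Q(X)$ as a $\Gamma$-module, provides the upper bound on $\dim V$.

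The sharp asymptotic $38g/\sqrt{\ln g\,\ln\ln\ln g}$ should arise by optimizing the parameters of the family of covers. The genus $g$ grows linearly with the degree, and so does the number $N$ of systoles; the dimension of $V$ is bounded by (number of relevant $\Gamma$-characters)$\,\times\,$(maximal multiplicity), and this product must be compared to $g$. The factor $1/\sqrt{\ln\ln\ln g}$ is characteristic of a sieve or Chebotarev-type count, suggesting that the degree parameter is taken to range over primes (or smooth integers) below a threshold depending on $g$, and that the extremal member of the family is chosen by a pigeonhole/counting argument over this range. The explicit constant $38$ will emerge from concrete geometric constants---the Collar Lemma width, the $2\ln(1+\sqrt{2})$ threshold, and the injectivity-radius estimate on $Y$.

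The heart of the proof will consist in reconciling three tensions within the single construction. First, one must verify that the lifts of $\gamma_0$ really are the systoles of $X$, i.e.\ that no shorter closed geodesic exists; this is typically obtained by combining the Collar Lemma with a count of closed geodesics of length $\leq \ell(\gamma_0)$ on $X$, which forces a lower bound on $\ell(\gamma_0)$ relative to the degree. Second, one must verify filling: the $\Gamma$-orbit of $\gamma_0$ must cut $X$ into simply connected pieces, a combinatorial condition on the monodromy that imposes strong constraints on the allowed covers. Third, one must keep $V$ small via representation theory, which pulls toward covers with few and low-dimensional irreducible $\Gamma$-representations. Exhibiting an infinite family of covers meeting all three requirements at once, with the extremal behaviour captured by the stated asymptotic, is where I expect the main technical difficulty to lie.
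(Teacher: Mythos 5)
Your reduction in the first paragraph is where the argument breaks. You invoke ``a theorem of Akrout'' to the effect that, at a point $X$ where the systoles fill, $\codim_X\mathcal{P}_g$ equals $\dim\mathrm{span}\{d\ell_{\gamma_i}(X)\}$. No such theorem exists: Akrout's result is that the systole function is a topological Morse function, which says nothing of this kind. Worse, the asserted equality is simply false. If the length differentials are linearly dependent, the locus $\{L(\gamma_1)=\cdots=L(\gamma_N)\}$ need not be large; low rank of a smooth map does \emph{not} imply that a preimage is high-dimensional (compare $\phi(x,y,z)=(x,\,x+y^2+z^2)$, rank $1$ at the origin, but the preimage of the diagonal is the $x$-axis, of codimension $2$). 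So the strategy of hunting for filling systoles with \emph{many} linear dependencies among the $d\ell_{\gamma_i}$ gives you no usable upper bound on $\codim\mathcal{P}_g$. The paper explicitly flags this obstruction: it notes that at the tesselated surface $S_g$ the differentials are often dependent (following an argument in Theorem 36 of \cite{SS}), and that consequently ``the cardinality of $\mathcal{C}$ does not determine the local codimension of $\Sys(\mathcal{C})$.'' This is precisely the opposite of what you take to be standard.

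The paper's actual route goes the other way. Starting from Theorem 25 of \cite{IM1}, one has surfaces $S_g$ tesselated by right-angled hexagons whose curves are exactly the systoles, with $\Card\,\Syst(S_g)\leq 57\,g/\sqrt{\ln g\,\ln\ln\ln g}$; the filling subset $\mathcal{C}$ of curves of index $3,4,5,6$ has $\Card\,\mathcal{C}=\tfrac23\,\Card\,\Syst(S_g)$, which is the source of the constant $38\approx\tfrac23\cdot 57$ (not Collar-Lemma or $2\ln(1+\sqrt2)$ constants). One then deforms $S_g$ along the Sanki path $\sigma_\tau(\epsilon)$ obtained by replacing the right angles of the tiles by alternating angles $\epsilon$, $\pi-\epsilon$. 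The key Theorem \ref{sanki} shows, via an asymptotic analysis of Wolpert's Poisson-bracket formula against ``dual'' length functions $L(B^*)$, that the differentials $\{dL(C):C\in\Curv(\tau)\}$ become linearly \emph{independent} at $\sigma_\tau(\epsilon)$ for all but finitely many $\epsilon$. With independence in hand the submersion theorem shows $\Sys(\mathcal{C})$ is nonempty near $\sigma_\tau(\epsilon)$ and smooth of codimension exactly $\Card\,\mathcal{C}-1$, giving $\codim\mathcal{P}_g<\Card\,\mathcal{C}$ (Corollaries \ref{codim} and \ref{cor2}). So the crucial step is to create independence by an analytic perturbation, not to exploit dependence; your representation-theoretic scheme for producing dependencies via covers is aimed at the wrong target, and also does not connect to the specific arithmetic input from \cite{IM1} that produces the infinite family $A$ and the $\sqrt{\ln\ln\ln g}$ factor.
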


This leads to the concrete question -which is the smallest $g$ for which $\codim\, \mathcal{P}_g <2g-1$?
In the last section, we will see that, for $g=17$, we have $\codim\, \mathcal{P}_{17} <32$. However, we do not know if $g=17$ is the smallest $g$ 
answering the question.

\bigskip
\noindent
{\it 0.2 Organization of the paper and the main idea of the proof}

\noindent  
The starting point of the proof is based on the main result of \cite{IM1}, that we now recall.

A regular right-angled hexagon $H$ of the Poincar\'e half plane $\HP$ is called 
{\it decorated} if it is oriented and its sides are cyclically indexed by $\Z/6\Z$. Up to  direct isometries, there are two such hexagons 
$\mathcal{H}$ and $\overline {\mathcal{H}}$, with opposide orientations.

A tesselation of a closed oriented hyperbolic surface $S$ is called a {\it standard  tesselation}
if each  tile is isometric to   $\mathcal{H}$ or $\overline {\mathcal{H}}$. Of course, it is presumed that the tiles are glued along edges with the same index, therefore a tile isometric to $\mathcal{H}$ is surrounded by six tiles isometric to $\overline {\mathcal{H}}$ and conversely.
  A vertex of a standard tesselation is an intersection point of two perpendicular geodesics.
Therefore, the $1$-skeleton of a standard tesselation consists of a finite family of closed geodesics, called the {\it curves} of the tesselation.

 For   a  hyperbolic surface $S$, denote by $\Syst(S)$ the set of systoles of $S$.\\

\begin{citethm}[Theorem 25 of \cite{IM1}]
There exists an infinite set $A$ of integers $g\geq 2$, and, for any $g\in A$ a closed oriented hyperbolic surface $S_g$ of genus $g$ endowed with a standard  tesselation $\tau_g$ such that

\begin{itemize}
\item[(1)] the  systoles of $S_g$ are exactly the curves of 
$\tau_g$, and
\item[(2)] we have 
$$\Card\,\Syst(S_g)\leq {57\over \sqrt{\ln\ln\ln g}}\,\,
{g\over \sqrt{ \ln g}} .$$
\end{itemize}
\end{citethm}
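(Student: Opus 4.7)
The plan is to build $S_g$ as a hyperbolic quotient of a symmetric base configuration whose combinatorial parameters can be explicitly controlled. First I would set up a combinatorial dictionary: a closed oriented hyperbolic surface with a standard tesselation is equivalent to a finite bipartite $6$-regular graph $\mathcal{G}$ on $N$ vertices (the hexagons, bipartitioned according to whether they are isometric to $\mathcal{H}$ or $\overline{\mathcal{H}}$) whose edges carry $\Z/6\Z$-labels agreeing on the two incident hexagons. An Euler characteristic count gives $g = 1 + N/4$. The curves of $\tau_g$ correspond to closed \emph{straight walks} in $\mathcal{G}$: cyclic sequences of edges in which the two edges incident at each traversed vertex carry labels differing by $3\pmod 6$. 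For each pair $P=\{i,i+3\}\subset\Z/6\Z$, the edges with labels in $P$ form a $2$-regular spanning subgraph $\mathcal{G}_P$, and a straight walk of type $P$ is exactly a cycle in $\mathcal{G}_P$; thus the total number of curves is $\sum_P N/k_P$ where $k_P$ is the mean cycle length in $\mathcal{G}_P$. To secure condition (2) it then suffices to produce $\mathcal{G}$ with all $k_P$ bounded below by a suitable constant times $\sqrt{\ln g \cdot \ln\ln\ln g}$.

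Second, I would realize $\mathcal{G}$ through congruence covers of an arithmetic Fuchsian group $\Gamma_0$ acting on $\HP$ with quotient a two-hexagon orbifold, derived from an order $\mathcal{O}$ in an indefinite quaternion algebra $B/\Q$ ramified appropriately so that the standard tesselation descends. The principal congruence subgroups $\Gamma(\mathfrak{q})\triangleleft\Gamma_0$ then produce covers $S_\mathfrak{q}=\HP/\Gamma(\mathfrak{q})$ inheriting standard tesselations, with $N\sim[\Gamma_0:\Gamma(\mathfrak{q})]$ hexagons. The straight-walk cycle lengths in $\mathcal{G}_P$ coincide with orders of specific elements in the finite quotient $\Gamma_0/\Gamma(\mathfrak{q})\simeq\PSL_2(\F_\mathfrak{q})$, and classical Lubotzky--Phillips--Sarnak-type arguments (spectral gap, Chebotarev over split primes of $B$) yield a uniform lower bound $\sim \ln|\Gamma_0/\Gamma(\mathfrak{q})|$ on those orders. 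The extra $\sqrt{\ln\ln\ln g}$ factor would be extracted by a probabilistic selection among the admissible primes: along a positive-density subfamily the cycle-length distribution concentrates strictly above the generic bound by $\sqrt{\ln\ln\ln g_\mathfrak{q}}$, and the infinite set $A$ is defined as the image of this subfamily under $\mathfrak{q}\mapsto g_\mathfrak{q}$.

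The principal obstacle is condition (1): verifying that no closed geodesic of $S_g$ is shorter than the shortest tesselation curve. A closed geodesic $\gamma$ not contained in the $1$-skeleton traverses a sequence of hexagons and hence defines a closed walk $w$ in $\mathcal{G}$; by right-angled hexagonal trigonometry each transverse crossing contributes length at least a universal constant $\delta>0$, so $\mathrm{length}(\gamma)\geq\delta|w|$. A lower bound for $|w|$ would come either from the combinatorial girth of $\mathcal{G}$ or, decisively, from a Buser--Sarnak-type diophantine inequality on $\tr(\gamma)$ for $\gamma\in\Gamma(\mathfrak{q})$, which forces $\mathrm{length}(\gamma)$ above the tesselation systole length $k_{\min}\ell_0$ once $\mathfrak{q}$ is large, where $\ell_0=\arcosh(\varphi)$. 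Geodesics partly tangent to and partly transverse to the $1$-skeleton reduce to the same bound by a concavity argument, since any excursion away from the skeleton strictly lengthens the path. Completing this systole verification is the key technical step; once it is in hand, the cardinality bound (2) follows from the combinatorial and arithmetic control established above.
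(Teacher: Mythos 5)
This statement is not proved in the paper you were given: it is quoted verbatim as Theorem~25 of \cite{IM1}, and the only in-paper material related to its proof is the framework sketched in Section~7.1 (surfaces $\HP/H$ for suitable finite-index subgroups $H$ of the right-angled-hexagon reflection group $W$, with the combinatorial length of the tesselation curves governed by orders of translation-type elements in $W^+/H$, and the systole property checked through Criterion~18 of \cite{IM1}) together with the worked genus-$17$ example. Your strategy of taking finite (congruence) quotients of an arithmetic lattice commensurable with this reflection group is in the same general family, but as written it has genuine gaps at exactly the points where the real work lies.

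First, the combinatorial dictionary is wrong: a curve of a standard tesselation lies in the $1$-skeleton and consists of edges sharing a common index, whereas your ``straight walks whose consecutive labels differ by $3$'' are dual walks crossing hexagons through opposite sides; these encode geodesics transverse to the tesselation (compare the words $s_is_{i+3}s_is_{i+3}$ of length $4\,\arcosh 3$ in the genus-$17$ example), so your count $\sum_P N/k_P$ counts the wrong objects. Second, the length bound $\mathrm{length}(\gamma)\geq\delta\,|w|$ with a universal $\delta$ is false: a geodesic can clip a corner and cross two edges in an arbitrarily short arc, and the ``concavity argument'' for geodesics partly running in the skeleton is not an argument. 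More seriously, what condition (1) requires is not a constant lower bound but $\mathrm{length}(\gamma)> kL$ with $kL\sim\sqrt{\ln g\,\ln\ln\ln g}$ growing, so the issue is a race between two growing quantities: the order $k$ of the distinguished translations modulo $\mathfrak{q}$ (the tesselation-curve length) versus the shortest length in all other classes. Your spectral-gap/Chebotarev input makes both of order $\ln g$ with unspecified constants and gives no reason the tesselation curves come out strictly shortest; in congruence covers the shortest geodesics are generically the smallest-trace elements, not a prescribed family. Finally, the $\sqrt{\ln\ln\ln g}$ factor --- the genuinely delicate number-theoretic content of Theorem~25 --- is obtained by an unspecified ``probabilistic selection'' and ``concentration'' claim with no mechanism, and the defining of $A$ depends entirely on it. (Minor: the side length is $\ell_0=\arcosh 2$, from $\cosh L=1+1/\sin\epsilon$ at $\epsilon=\pi/2$, not $\arcosh\varphi$; the genus count $g=1+N/4$ is correct.)
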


The {\it index} of a curve of the tesselation $\tau_g$ is the common index of its edges.
It is clear that the subset
 ${\mathcal C}$  of curves of index
$\neq 1$ or $2$ fills the surface and
$\Card\,{\mathcal C}=2/3\, \Card \Syst(S_g)$. 

Let $\Sys({\mathcal C})$ be the set of marked hyperbolic surfaces $S$ of genus $g$ such that 
$\Syst(S)={\mathcal C}$. For any curve, or free homotopy class, $C$ of $S_g$, let $L(C)$ be its length,  viewed as a function on the Teichm\"uller space $\mathcal{T}_{g}$. 
Set ${\mathcal C}=\{C_1, C_2\dots\}$.
The subspace $\Sys({\mathcal C})$ is defined
by the $\Card\,{\mathcal C}-1$ equations

\centerline{$L(C_1)=L(C_2)=\dots$}

\noindent together with some inequalities.
Intuitively, our result should follow from the
following  two facts 

\begin{itemize}
\item[(1)] In ${\mathcal T}_g$, the point $S_g$
is adjacent to $\Sys({\mathcal C})$, and

\item[(2)] for any point $x\in S({\mathcal C})$ closed to $S_g$, we have
$\codim_x\, S({\mathcal C})=\Card\,{\mathcal C}-1$
\end{itemize}

If we assume that
the differentials $\{\d L(C)\mid c\in\Syst(S_g)\}$, are linearly independent at the point $S_g$, the 
previous two facts would follow from 
the submersion theorem.
However, an argument  in Theorem 36 of \cite{SS} shows that these differentials  are often linearly dependent at $S_{g}$. 
For this reason, the cardinality of ${\mathcal C}$ does not determine  the local codimension of $\Sys({\mathcal C})$. 

Following an idea of Sanki \cite{S}, we can deform the angles of the tiles, by alternately replacing the right angles by angles of value $\epsilon$ and $\pi- \epsilon$, for any $\epsilon\in]0,\pi[$.  In this way we obtain a path 
$\sigma:]0,\pi[\to \mathcal{T}_g$, such that $\sigma(\pi/2)$ is the hyperbolic surface $S_g$. We will called it the
{\it Sanki} path of the tesselation $\tau_g$. The main idea of the proof is the following

\begin{thm} [see Section \ref{Gloop}] 
The set of differentials

\centerline
{$\{\d L(C)\ | C\in \Syst(S_g)\}$}

\noindent is linearly independent at $\sigma(\epsilon)$, except for finitely many values of $\epsilon$. 
\end{thm}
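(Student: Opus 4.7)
The strategy is to exploit real-analyticity: since both the Sanki path $\sigma$ and the length functions $L(C)$ are real-analytic, the rank of the family of differentials $\{\d L(C_i)|_{\sigma(\epsilon)}\}$ is maximal away from a closed real-analytic subset of $(0,\pi)$, and one reduces the theorem to exhibiting a single value of $\epsilon$ at which the differentials are linearly independent.

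First, I would verify the analyticity of the ingredients. The length function $L(C) : \mathcal{T}_g \to \R$ is real-analytic (for instance via the trace of the $\PSL(2,\R)$ holonomy), and $\sigma$ is built from an explicit hyperbolic-trigonometric construction of hexagons with alternating interior angles $\epsilon,\,\pi-\epsilon$, hence real-analytic in $\epsilon$ on $(0,\pi)$ and extending real-analytically to a neighborhood of $[0,\pi]$. Fix real-analytic local coordinates on $\mathcal{T}_g$ (say, Fenchel-Nielsen coordinates) along the image of $\sigma$, and let $M(\epsilon)$ denote the $n\times N$ matrix, with $n = \Card\,\Syst(S_g)$ and $N = 6g-6$, whose $i$-th row is $\d L(C_i)|_{\sigma(\epsilon)}$ expressed in these coordinates. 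Then $\epsilon \mapsto M(\epsilon)$ is a real-analytic matrix-valued function. Linear dependence at $\sigma(\epsilon)$ is equivalent to the simultaneous vanishing of all $n\times n$ minors of $M(\epsilon)$; each such minor is a real-analytic function of $\epsilon$, so the bad set is either all of $(0,\pi)$ or a closed analytic subset of a neighborhood of $[0,\pi]$, which by compactness is finite.

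The decisive step is therefore to show that at least one $n\times n$ minor is not identically zero, equivalently to exhibit $\epsilon^*\in(0,\pi)$ with $\rk M(\epsilon^*)=n$. One cannot simply take $\epsilon^* = \pi/2$ (the original right-angled surface $S_g$): at that symmetric value the argument of Theorem 36 of \cite{SS} produces genuine linear relations among the $\d L(C_i)$. The key point is that those relations are a consequence of the symmetries of the right-angled tessellation, and the Sanki deformation breaks the relevant symmetries. Concretely, I would compute the first-order variation $\partial_\epsilon M(\pi/2)$ from the hyperbolic trigonometry of the deformed hexagons and verify that it does not preserve the Schmutz-Schaller null space, so that any covector witnessing a relation at $\pi/2$ pairs non-trivially with $\partial_\epsilon M(\pi/2)$. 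This forces some minor to be non-zero on a punctured neighborhood of $\pi/2$, and real-analyticity on $[0,\pi]$ then upgrades this to the claimed finiteness.

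The hard part is precisely this symmetry-breaking computation: one must describe explicitly enough how each systole length varies under the alternating-angle deformation (using the trigonometry of the deformed hexagons together with Wolpert-type derivative formulas for $\partial L(C)$) to rule out any accidental cancellation that would re-create a Schmutz-Schaller dependency for $\epsilon$ near but different from $\pi/2$. Once this infinitesimal analysis is in place, the rest of the argument is formal analyticity.
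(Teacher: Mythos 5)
Your overall shell—reduce by real-analyticity to exhibiting one $\epsilon^*$ with maximal rank—is the same as the paper's, and the observation that $\epsilon^*=\pi/2$ cannot be used directly is also correct. But the decisive step, which you defer as ``the hard part,'' is exactly where you diverge from the paper and where your plan is not yet a proof.

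You propose to break the Schmutz--Schaller degeneracy by a first-order analysis at $\epsilon=\pi/2$: compute $\partial_\epsilon M(\pi/2)$ and show it ``does not preserve the null space.'' Two problems. First, this is a plan, not an argument: you never say which symmetry of $\tau_g$ is responsible for the relations, nor why the alternating-angle deformation is guaranteed to kill it rather than merely shrinking the null space. Second, even as a plan the logic is too weak: if the null space of $M(\pi/2)^T$ has dimension $d>1$, ``not preserved'' by the first-order variation does not imply the rank jumps to full on a punctured neighbourhood of $\pi/2$; you would need the first-order map to be injective on the whole null space (modulo the image of $M(\pi/2)$), and even that can fail if the relevant minor vanishes to higher order in $\epsilon-\pi/2$. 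The paper avoids this entirely by working at the opposite end, $\epsilon\to 0$, where the geometry degenerates in a controlled way.

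Concretely, the paper's non-degeneracy certificate is a duality construction rather than a perturbation of the symmetric point. For each blue curve $B$ it defines a ``dual'' length function $L(B^*)=\tfrac{1}{2}\bigl(L(R)+L(R')-L(D_b)\bigr)$ built from a pair of pants $\Pi(k,\epsilon)$ immersed around a blue edge $b$ of $B$, and shows via Wolpert's cosine formula together with the asymptotic estimates of Sections 2--4 (in particular $\omega_i\to 0$ as $\epsilon\to 0$) that
\[
\lim_{\epsilon\to 0}\{L(A),L(B^*)\}(\sigma_\tau(\epsilon))=\delta_{A,B}.
\]
Hence $\delta(\epsilon):=\det\bigl(\{L(A),L(B^*)\}\bigr)$ is an analytic function with $\lim_{\epsilon\to 0}\delta(\epsilon)=1$, so its zero set $F$ is finite; and for $\epsilon\notin F$ a short argument (Poisson brackets kill the red coefficients, Fenchel--Nielsen coordinates kill the rest) gives linear independence of all the $\d L(C)$. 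This gives a self-contained, symmetry-independent witness of full rank, at the price of the trigonometric estimates in Sections 3--4, rather than relying on an unquantified ``symmetry-breaking'' assertion at $\pi/2$. If you want to salvage your route, you would need to identify the exact source of the Schmutz--Schaller relations and prove an injectivity statement for the first-order variation on the null space, which is likely harder than the paper's $\epsilon\to 0$ computation.
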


It implies that the assertions (1) and (2) are correct for $x=\sigma(\epsilon)$, where
$\epsilon\neq\pi/2$ is closed enough to $\pi/2$.

The proof of Theorem 2 is based on a  duality,
which is expressed in terms of the Poisson product associated with the Weil-Petersson symplectic structure on ${\mathcal T}_g$.  For any 
curve $B$ of the tesselation, we define a dual function $L(B^*)$ which is a linear combination (with coefficients $\pm 1/2$) of  lengths of three curves, which are the boundary components of a well-chosen pair of pants. 

It results from an  asymptotic analysis of Wolpert's formula \cite{wolpert83} that

\begin{equation}
 \lim_{\epsilon\to 0}\,\,
 \{L(B),L(A^*)\}(\sigma(\epsilon))=\delta_{A,B}
 \end{equation}
 
\noindent for any two curves
 $A$, $B$ of the tesselation, where,
as usual, $\delta_{A,B}$ denotes the Kronecker's symbol. Since $\delta:=\det (\{L(B),L(A^*)\})$ is an analytic function, it follows that 
$\delta(\sigma(\epsilon))$ is not zero for any
$\epsilon\neq\pi/2$ closed to $\pi/2$.

In fact the proof of equation (1) is based on 
elementary but lengthy trigonometric computations
of Sections 2-4.
To present the computations
and the figures  as simply as possible,
we have restricted ourself to hexagonal tesselations. However similar results hold for
tesselations by $2p$-gons for any $p\geq 3$.







\section{Background and Definitions}
\label{sub:defns}

\noindent
{\it 1.1 Marking of surfaces and the Teichm\"uller space}

\noindent 
Let $g\geq 2$. By definition, the {\it Teichm\"uller} space ${\mathcal T}_g$ is the space of
all marked oriented closed  hyperbolic surfaces of genus $g$. It means that ${\mathcal T}_g$ parametrizes the set of those hyperbolic surfaces, where the marking is a datum that distinguishes isometric surfaces corresponding to distinct paramaters. 

There are various equivalent definitions of the marking \cite{Bu}. Here we will adopt the
most convenient for our purpose. Let
$\Pi_g$ be the group given by the following presentation

$$\langle a_1,b_1,\dots,a_g,b_g
\mid (a_1,b_1)(a_2,b_2)\dots (a_g,b_g)=1\rangle .$$

Then a point $x$ of the Teichmuller space is a loxodromic (i.e. faithfull with discrete image) representation $\rho_x:\Pi_g\to\PSL(2,\R)$ modulo
linear equivalence. At the point $x$, the corresponding hyperbolic surface is 
$S_x:=\HP/\rho_x(\Pi_g)$. With this definition,
${\mathcal T}_g$ is a connected component of a
real algebraic variety.

Formally, a {\it curve} $c$ is a nontrivial conjugacy class of $\Pi_g$. For a hyperbolic surface, any
free homotopy class has a unique geodesic representative. Thus $c$  defines a closed geodesic
$c_x$ of $S_x$, for any  $x\in {\mathcal T}_g$.
Here closed geodesics are nonoriented, so we will not distinguish the conjugacy classes of $c$ and $c^{-1}$.
Concretely, the marking of the surface $S_x$ means that each geodesic $C$ of $S_x$ is marked by a conjugacy class in $\Pi_g$.

In this setting, the \textit{mapping class group} $\Gamma_{g}$ is the group of all outer automorphisms of $\Pi_g$ which act trivially on $H^2(\Pi_g)\simeq\Z$.
It  acts on $\mathcal{T}_{g}$ by 
changing the marking, or, more formally, by  twisting the representation of $\Pi_g$.

\smallskip\noindent
{\it 1.2 Length of curves}

\noindent The length  of an arc, or a closed geodesic, $e$ will be denoted $l(e)$. When there is no possibility of confusion, we will use the same letter for an arc $e$ and its length. For example the expression $\cosh\, H$ in the proof of Lemma \ref{inequality} stands for $\cosh\, l(H)$.

Let $x\in{\mathcal T}_g$.
Given a curve $c$, set $L(c)(x)=l(c_x)$  where
$c_x$ is  the geodesic representative
of $c$ at $x$.  
The formula $2\ch (L(c(x)))=\vert\Tr(\rho_x(c))\vert$ shows that the function 
$L(c):{\mathcal T}_g\to\R$ is analytic. Let $C$ be a closed geodesic of $S_x$.
We set $L(C)=L(c)$, where $c$ is the curve marking $C$.

\smallskip\noindent
{\it 1.3 The Thurston's spine $\mathcal{P}_g$} 

\noindent
In riemannian geometry,
a {\it systole} is an essential closed geodesic of
minimal length. In fact, for a hyperbolic surface, any closed geodesic is essential.
Let $\mathcal{P}_g$ be the set of all points $x\in \mathcal{T}_g$ such that the set of systoles fills $S_{x}$, i.e. it cuts $S_{x}$ into polygons. The subspace $\mathcal{P}_g$ is called the {\it Thurston spine}, see \cite{T85}.

By definition, the Thurston spine $\mathcal{P}_g$ is a semi-analytic subset \cite{T85}, and  therefore 
it admits a triangulation by \cite{L1}. In particular,
the  dimension $\dim_x\, \mathcal{P}_g$ at any point $x\in \mathcal{P}_g$ is well defined. Set

$$\dim\, \mathcal{P}_g=
\Max_{x\in \mathcal{P}_g}\,\dim_x\, \mathcal{P}_g\text{ and }\codim \mathcal{P}_g:=
\dim\,\mathcal{T}_g-\dim\,\mathcal{P}_g.$$

 \bigskip
\noindent
{\it 1.4 Orientation of the boundary components}

\noindent In what follows, all surfaces $S$ are given with an orientation. When $S$ has a boundary $\partial S$, it is
 oriented by the rule that, while moving forward along 
$\partial S$,    the interior of 
$S$ is on the  right. 
With this convention, when a circle of the plane
is viewed as the boundary of its interior, it 
is oriented in the clockwise direction.

\bigskip
\noindent
{\it 1.5 Angles}

\noindent Let $C$, $D$ be two distinct geodesic arcs of a surface and let $P$ be an intersection point. The angle of $C$ and $D$ at $P$, denoted 
$\angle_P CD$ is measured anticlockwise from $T_PC$ to $T_PD$, where $T_PC$ and $T_PD$ are the tangent line at $P$ of $C$ and $D$. By definition
$\angle_P CD$ belongs to $]0,\pi[$. When we permutes $C$ and $D$ there is the formula

\centerline{ $\angle_P DC=\pi-\angle_P CD$.}

\noindent In what follows, it will be convenient to set $\overline\alpha=\pi-\alpha$ for any $\alpha\in [0,\pi]$. Also it will be convenient to use the notation
$\angle DC$ when  the point $P$ is unambiguously defined.

The notion of {\it inner angles} is different.
Let $S$ be a surface whose boundary 
$\partial S$ is piecewise
geodesic. Let $c$ and $d$ be two consecutive 
geodesic arcs of $\partial S$ meeting at a point $P$. The inner angle is a real number 
$\alpha\in]0,2 \pi[$. We have $\alpha<\pi$ when $S$ is locally convex around $P$.
In that case, the equality 
$\angle\,cd=\alpha$ means that the arc $c$ preceeds $d$ when going forward along $\partial S$.

\bigskip
\noindent











\section{Trigonometry in $\HP$}

\noindent As stated in the Introduction, the analysis of Sanki's paths, defined in Section \ref{Gloop}, is based on many trigonometric computations. This section involves trigonometric computations in the Poincar\'e half-plane $\HP$.
Subsequent computations in the pairs of pants $\Pi(k,\epsilon)$ will be done
in Section \ref{Pants}.\\

 Let   $d_\HP$ be the hyperbolic distance
on $\HP$. By definition, a {\it line}   is  a complete geodesic $\Delta$ of $\HP$.
For any $P, Q\in\Delta$, the closed arc  between $P$  and $Q$ is called a {\it segment} and it will be  denoted $PQ$. When necessary, the segment $PQ$  is oriented from $P$ to $Q$.

Given three points $A,B$ and $C\in\HP$, we denote by $ABC$ the triangle $T$ whose sides are $AB$, $BC$ and $CA$. By our convention, $\partial T$ is oriented clockwise, but we do not require a specific orientation of the sides. Given four points $A,B,C$ and $D\in\HP$, we define in the same way the quadrilateral $ABCD$, not ruling out the possibility that one pair of opposite sides intersects.

For the whole section, we will fix an angle
$\epsilon\in]0,\pi[$. In the pictures, we will assume that $\epsilon<\pi/2$.

\bigskip
\noindent
{\it 2.1 The $\epsilon$-pencil $\mathcal{F}_\epsilon(\Delta)$ in $\HP$}

\noindent  Let $\Delta\subset\HP$ be a line. For any $P\in\Delta$, let $F(P)$ be the line passing through $P$ with $\angle \Delta F(P)=\epsilon$, see Section 1.5 for our convention concerning angles. Since no triangle has two angles of values $\epsilon$ and $\overline\epsilon$, any two lines $F(P)$ and $F(P')$ are parallel. The set
$\mathcal{F}_\epsilon(\Delta)
:=\{F(P)\mid P\in\Delta\}$ will be called the {\it $\epsilon$-pencil} along the line $\Delta$. \\

Let $\Delta'$ be a geodesic arc of $\HP$ with $\Delta\cap \Delta'=\emptyset$. When $F(P)$ meets $\Delta'$, set $\Omega(P)=F(P)\cap \Delta'$ and  

\centerline {$\omega(P):=\angle \Delta' F(P)$.}

\noindent The line $\Delta$ is oriented by the convention that, while going forward, the arc $\Delta'$ is on the right. Therefore the notion of an increasing function 
$f:\Delta\to\R$ is well-defined.\\

In contrast with the Euclidean geometry, the angle $\omega(P)$ is not constant. On the contrary, it can vary from $0$ to $\pi$, as it will now be shown.

\begin{lemma}
 \label{pencil} 
We have:
\begin{enumerate}
\item The set $I:= \{P\in \Delta\mid F(P)\cap \Delta' \neq\emptyset\}$ is an interval of $\Delta$. Moreover, the map $\omega:I\to ]0,\pi[$ is increasing.

\item Furthermore, if $\Delta'$ is a line of $\HP$ and $d_\HP(\Delta',\Delta)\neq 0$, then $\omega$ is  bijective. 
\end{enumerate}
\end{lemma}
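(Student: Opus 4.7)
The plan is to apply hyperbolic Gauss--Bonnet to a quadrilateral built from two points of $I$ and their $\Omega$-images, and to combine this with a limit analysis near the endpoints of $I$. Pick $P_0, P_1\in I$ with $P_0$ preceding $P_1$ in the orientation of $\Delta$, and set $\Omega_i=\Omega(P_i)$. The four points $P_0,P_1,\Omega_1,\Omega_0$ are the vertices of a simple quadrilateral $\mathcal{Q}$: its four sides lie on $\Delta$, $F(P_1)$, $\Delta'$, and $F(P_0)$, and non-adjacent pairs cannot cross because $\Delta\cap\Delta'=\emptyset$ and the $F(P_i)$ are parallel in the hyperbolic sense.

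For the interval claim in (1), I would take any $Q$ strictly between $P_0$ and $P_1$ on $\Delta$. One of the two rays of $F(Q)$ at $Q$ enters the half-plane of $\Delta$ containing $\Delta'$, hence enters the interior of $\mathcal{Q}$. By the pencil property --- no hyperbolic triangle has two angles equal to $\epsilon$ and $\pi-\epsilon$, as already noted in the text --- this ray cannot meet $F(P_0)$ or $F(P_1)$. It must therefore exit $\mathcal{Q}$ through the $\Delta'$-side, yielding $Q\in I$. Hence $I$ is an interval.

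For monotonicity, I would read off the four inner angles of $\mathcal{Q}$. The condition $\angle\Delta F(P)=\epsilon$ together with the orientation of $\Delta$ (so that $\Delta'$ lies on the right while going forward) forces the inner angles at $P_0$ and $P_1$ to be $\pi-\epsilon$ and $\epsilon$, summing to $\pi$; a similar bookkeeping at $\Omega_0,\Omega_1$ yields inner angles $\omega(P_0)$ and $\pi-\omega(P_1)$ there. Hyperbolic Gauss--Bonnet then gives
\[
(\pi-\epsilon)+\epsilon+\omega(P_0)+(\pi-\omega(P_1))=2\pi-\mathrm{Area}(\mathcal{Q})<2\pi,
\]
i.e.\ $\omega(P_0)<\omega(P_1)$, which is strict monotonicity.

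For part (2), the hypothesis $d_{\HP}(\Delta,\Delta')>0$ ensures that $\Delta'$ has two distinct ideal endpoints, neither on the closure of $\Delta$. Writing $I=\,]a,b[$, I would argue that as $P\to a^{+}$ the point $\Omega(P)$ must escape to an ideal endpoint of $\Delta'$; otherwise a subsequential limit in the interior of $\Delta'$ would, by continuity of $P\mapsto F(P)$ and transversality, extend $F(P')\cap\Delta'\neq\emptyset$ to $P'$ slightly past $a$, contradicting the maximality of $I$. In this limit $F(P)$ and $\Delta'$ share an ideal point, so are asymptotic parallel, and $\omega(P)\to 0$. The same argument at $b^{-}$ with the other ideal endpoint of $\Delta'$ yields $\omega(P)\to\pi$. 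Combined with continuity and the strict monotonicity from (1), the intermediate value theorem gives the bijectivity of $\omega:I\to\,]0,\pi[$. The main obstacle I anticipate is the careful orientation bookkeeping required to pin down the inner angles of $\mathcal{Q}$ as $(\pi-\epsilon,\epsilon,\omega(P_0),\pi-\omega(P_1))$ rather than the sign-reversed combination: the ``$\Delta'$ on the right of $\Delta$'' convention is what fixes which ray of $F(P)$ carries $\Omega(P)$ and onto which side the interior angles open.
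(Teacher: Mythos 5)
Your argument for part (1) is essentially identical to the paper's: both use the quadrilateral $P_0P_1\Omega_1\Omega_0$ and Gauss--Bonnet to get $\omega(P_0)<\omega(P_1)$, and both show $I$ is an interval by noting that $F(Q)$ enters the quadrilateral at an interior point $Q$, cannot exit through the two parallel pencil sides, and hence must exit through $\Delta'$.

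For part (2) your route differs slightly in presentation. The paper normalises $\Delta=\R i$, constructs explicitly the two members $F^{\pm}$ of the pencil that are asymptotic to $\Delta'$ (sharing the ideal endpoints $a$ and $b$ of $\Delta'$), identifies $I$ with the open segment $P^{-}P^{+}$, and reads off $\omega\to0$ at $P^-$ and $\omega\to\pi$ at $P^+$. You instead run a compactness/contradiction argument: if $\Omega(P)$ had a subsequential limit in the interior of $\Delta'$ as $P$ approaches an endpoint of $I$, continuity and transversality would let you extend $I$, a contradiction; so $F(P)$ becomes asymptotic to $\Delta'$ and $\omega(P)\to\{0,\pi\}$. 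These are logically equivalent, and monotonicity then sorts the two limits. The only point you leave implicit, and which the paper's explicit construction of $P^{\pm}$ handles directly, is that the endpoints $a,b$ of $I$ are \emph{interior} points of $\Delta$ rather than ideal endpoints --- your ``extend past $a$'' step presupposes there is a point of $\Delta$ past $a$. This is indeed a consequence of $d_{\HP}(\Delta,\Delta')>0$ (for $P$ near either ideal end of $\Delta$, the line $F(P)$ either shrinks to that ideal point or its two endpoints straddle those of $\Delta'$, and in either case it misses $\Delta'$), but it deserves a sentence in your write-up.
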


\noindent
{\it Proof of claim (1).}
Let PP' be a positively oriented arc of $I$. Consider the quadrilateral
$Q:=P P'\Omega(P')\Omega(P)$. With our conventions, its inner angles are $\epsilon,\overline{\epsilon},
\overline{\omega(P')}$ and $\omega(P)$. Since
the area of $Q$ is

\centerline{
$2\pi -[\epsilon+\overline{\epsilon}+
\overline{\omega(P')}+\omega(P)]=
\omega(P')-\omega(P)$,}

\noindent the function $\omega$ is increasing. \\

Next let $P''$ in the interior of the segment
$PP'$. Since the line $F(P'')$ enters $Q$ at $P''$,
it should left $Q$ at another point.
Since $F(P'')$ is parallel to $F(P)$ and $F(P')$, the
exit point lies in the segment
$\Omega(P)\Omega(P')$, hence $P''$ belongs to $I$. 
Since $I$ contains a segment whenever it contains its two extremal points,  $I$ is an interval.\\

\smallskip\noindent
{\it Proof of Claim (2).}
We can assume that $\Delta=\R i$. By the assumption that $d_\HP(\Delta,\Delta')>0$, the lines  $\Delta$ and $\Delta'$ do not intersect and their endpoints in $\partial\HP$ are distinct. Therefore the endpoints $a,\,b$ of $\Delta'$ in $\partial\HP$ are real numbers with same signs. Without loss of generality, we can also assume that $0< a< b$,  as shown in Figure 1.\\

There is a line $F^+$ (resp. $F^-$) in $\mathcal{F}_\epsilon(\Delta)$ 
whose the endpoint in $\R_{>0}$  is  $b$ 
(resp.  $a$). Set

\centerline {$P^\pm=
\Delta\cap F^\pm$.}

\bigskip
\begin{figure}[ht!]\label{FIG1}
\centering
\includegraphics[width=90mm]{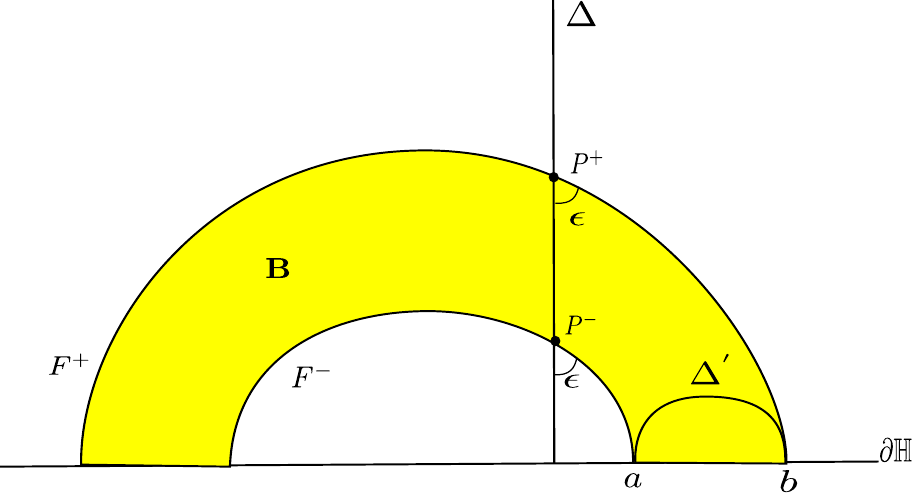}
\caption{The band ${\bf B}$}
\end{figure}

Let ${\bf B}$ be the open band delimited by $F^+$ and $F^-$. When $P$ belongs 
to the interior of $P^-P^+$, the line $F(P)$ lies in the interior of the band ${\bf B}$ and $F(t)$ meets $\Delta'$. It is clear from the definition of $F^\pm$ that

$$\lim_{P\to P_-}\,\omega(P)=0\,\,{\mathrm and}
\lim_{P\to P_+}\,\omega(P)=\pi.$$

\noindent Hence by Assertion (1),  $\omega$ is bijective.

\qed

\bigskip
\noindent
{\it 2.2 The $\epsilon$-edge}

\noindent Let $\epsilon\in]0,\pi[$ and let $\Delta$, $\Delta'$ be two lines with $d_\HP(\Delta',\Delta)>0$. Let $H$ be the  the common perpendicular arc to $\Delta$ and $\Delta'$ and let $S\in \Delta$ and $S'\in\Delta'$ be its endpoints.\\

By Lemma \ref{pencil} there is a unique $P\in\Delta$ such that $\omega(P)=\epsilon$. The edge $e =P\Omega(P)$ will be called the {\it $\epsilon$-edge} of $\Delta$ and $\Delta'$. For $\epsilon=\pi/2$, the $\epsilon$-edge is the perpendicular arc $H$.

\begin{lemma}\label{inequality} Set 
$L=d_\HP(P,P')$ where $P'=\Omega(P)$.
If $\epsilon\neq \pi/2$, then
\begin{enumerate}
\item $H$ and $e$ intersect at their midpoints.  

\item $d_\HP(P, S)=d_\HP(P', S')< L/2$.
\end{enumerate}
Moreover the segment  $SP$ is positively oriented whenever $\epsilon<\pi/2$.
\end{lemma}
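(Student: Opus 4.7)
The plan is to exploit the symmetry of the configuration $(\Delta,\Delta',H)$ under the hyperbolic involution $\sigma$: the rotation by $\pi$ about the midpoint $M$ of $H$. As an orientation-preserving isometry that fixes $M$ and swaps $S\leftrightarrow S'$, the map $\sigma$ swaps $\Delta\leftrightarrow\Delta'$ together with their chosen orientations (each defined by \emph{the other line is on the right}). I first show $\sigma(P)=P'$. Apply $\sigma$ to the geodesic $F(P)$: the image passes through $\sigma(P')\in\Delta$, and transporting the equality $\omega(P)=\epsilon$ via $\sigma$ (which respects anticlockwise angles and the orientation of $\Delta$) gives $\angle\Delta\,\sigma(F(P))=\epsilon$ at $\sigma(P')$. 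Hence $\sigma(F(P))\in\mathcal{F}_\epsilon(\Delta)$, so $\sigma(F(P))=F(\sigma(P'))$; symmetrically $\omega(\sigma(P'))=\epsilon$. The uniqueness in Lemma~\ref{pencil}(2) then forces $\sigma(P')=P$, i.e.\ $\sigma(P)=P'$. In particular $M$ is the common midpoint of $e=PP'$ and of $H=SS'$. When $\epsilon\ne\pi/2$ the geodesics $e$ and $H$ are distinct: $P=S$ would force $F(P)=H$, hence $\angle\Delta F(P)=\pi/2\ne\epsilon$. This proves claim~(1).

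The equality $d_\HP(P,S)=d_\HP(P',S')$ in claim~(2) is immediate from $\sigma$ exchanging the pairs $\{S,P\}$ and $\{S',P'\}$. For the strict inequality, I will apply hyperbolic Pythagoras in the triangle $SPM$, which has a right angle at $S$ (since $\Delta\perp H$ at $S$, with $SP\subset\Delta$ and $SM\subset H$), hypotenuse $PM$ of length $L/2$, and leg $SM$ of length $d_\HP(\Delta,\Delta')/2>0$. This gives
\[
\cosh(L/2)=\cosh\bigl(d_\HP(P,S)\bigr)\,\cosh\bigl(d_\HP(\Delta,\Delta')/2\bigr),
\]
and since the second factor is strictly greater than $1$, we conclude $d_\HP(P,S)<L/2$.

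The subtlest point is the orientation claim. I would proceed by analysing the inner angle $\alpha_P$ of the right triangle $SPM$ at $P$; as a non-right angle of a hyperbolic right triangle, $\alpha_P\in(0,\pi/2)$. On the other hand, the unoriented angle between the rays $PS$ (along $\Delta$) and $PM$ (along $e$) equals either $\epsilon$ or $\pi-\epsilon$, so the constraint $\alpha_P<\pi/2$ forces $\alpha_P=\min(\epsilon,\pi-\epsilon)$. The main obstacle is to decide which of the two possibilities is realised. I would resolve it as follows: the tangent direction of $e$ at $P$ dictated by the convention $\angle\Delta e=\epsilon\in(0,\pi)$ is obtained from $T_P\Delta$ by an anticlockwise rotation by an angle in $(0,\pi)$, hence lies in the half-plane to the left of $\Delta$ (on the side opposite to $\Delta'$); since $M$ lies on the right of $\Delta$ (toward $\Delta'$), the ray $PM$ is opposite to this prescribed tangent. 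A direct angle comparison then shows that the case $\alpha_P=\epsilon$ corresponds exactly to $PS=-T_P\Delta$, i.e.\ to $S$ lying in the negative direction from $P$ on $\Delta$, which is precisely the condition that $SP$ be positively oriented. Since $\alpha_P=\min(\epsilon,\pi-\epsilon)$, this case is realised exactly when $\epsilon<\pi/2$, as claimed.
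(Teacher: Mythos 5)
Your proof is correct, and it takes a genuinely different route from the paper's. The paper argues via the quadrilateral $Q=SP\Omega(P)S'$: since its angle sum $\pi/2+\epsilon+\overline\epsilon+\pi/2=2\pi$ is impossible for a simple geodesic quadrilateral in $\HP$, the sides $e$ and $H$ must cross at a point $M$; the resulting two triangles $SPM$ and $M\Omega(P)S'$ then have the same three angles (right angle, vertical angle, and a third angle forced by area cancellation) and are therefore congruent, giving the midpoint property. You instead exploit the intrinsic $\Z/2\Z$-symmetry of the pair $(\Delta,\Delta')$: the $\pi$-rotation $\sigma$ about the midpoint $M$ of $H$ swaps $\Delta\leftrightarrow\Delta'$ with their orientations, so transporting the defining equations $\angle\Delta F(P)=\epsilon$ and $\omega(P)=\epsilon$ through $\sigma$ shows that $\sigma(P')$ also satisfies $\omega(\cdot)=\epsilon$, and the uniqueness in Lemma~\ref{pencil}(2) forces $\sigma(P')=P$. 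This immediately gives $M$ as the common midpoint of $e$ and $H$ and the equality $d_\HP(P,S)=d_\HP(P',S')$ without any bowtie or area-cancellation discussion; it is shorter, more conceptual, and relies on a part of Lemma~\ref{pencil} that the paper's proof does not use. For the strict inequality both proofs invoke the right triangle $SPM$ with hypotenuse $PM$ of length $L/2$; you make the hyperbolic Pythagorean identity explicit, while the paper simply cites that the hypotenuse is the longest side. Notably, the paper's proof does not actually address the final orientation clause (``$SP$ is positively oriented whenever $\epsilon<\pi/2$''), whereas your angle chase — identifying $\alpha_P$ as $\min(\epsilon,\overline\epsilon)$ because it is a non-right angle of a hyperbolic right triangle, and then matching the two cases to the two possible positions of $S$ relative to $P$ — supplies a complete argument for it.
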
 

\begin{proof} 
We have $\angle \Delta e=\epsilon$ and $\angle e\Delta'=\overline \epsilon$. The sum of the four angles of the quadrilateral
$Q:=SP\Omega(P)S'$ is $2\pi$. It follows that a pair of opposite edges must intersect, so $e$ meets $H$ at some point $M$. The two triangles $SPM$ and $M\Omega(P)S'$ have the same three angles, and are therefore isometric. In particular 
$d_\HP(P, S)=d_\HP(P', S')$.\\

It follows that $e$ and the arc $H=SS'$ intersect at their midpoint $M$, thus we have 
$d_\HP(P,M)=L/2$. Since $PM$ is the hypothenuse
of the right-angled triangle $PSM$, we have
$d_\HP(P, S)< L/2$. The second claim follows.
\end{proof}

\bigskip
\noindent
{\it 2.3 Hexagon trigonometry and edge colouring}

\begin{lemma}\label{hexagon} Up to isometry, there exists a unique hyperbolic hexagon 
$H(\epsilon)$ whose sides all have the same length $L=L(\epsilon)$  and whose inner angles are alternately
$\epsilon$ and $\overline \epsilon$.\\

Moreover we have

$$\cosh L= 1+{1\over \sin\epsilon}.$$

\end{lemma}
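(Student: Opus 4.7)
The plan is to realize $H(\epsilon)$ as six congruent triangles glued at a common central vertex, from which the length formula and the uniqueness will both follow.

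Concretely, I would start from the hyperbolic triangle $T$ with angles $\pi/3$, $\epsilon/2$, and $\overline\epsilon/2$. Since these positive angles sum to $\pi/3 + \pi/2 = 5\pi/6 < \pi$, such a triangle exists and is unique up to isometry in $\HP$. Label its vertices $O$, $P$, $Q$ with the corresponding angles, and let $L = d_{\HP}(P,Q)$ be the length of the side opposite $O$. The reflections across the two sides $OP$ and $OQ$ generate a dihedral group of order six (the composition of the two reflections being a rotation by $2\pi/3$ about $O$), and the orbit of $T$ under this group consists of six triangles that tile a full neighborhood of $O$, consuming exactly the angle $6\cdot \pi/3 = 2\pi$. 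The outer boundary of this union is a geodesic hexagon whose vertices alternate between images of $P$ (two triangles each contribute angle $\epsilon/2$, giving an interior angle $\epsilon$) and images of $Q$ (interior angle $\overline\epsilon$), and whose six sides are all isometric to $PQ$, hence of common length $L$.

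To extract the formula, I would apply the hyperbolic law of cosines to $T$:
\[
\cos(\pi/3) \;=\; -\cos(\epsilon/2)\cos(\overline\epsilon/2) + \sin(\epsilon/2)\sin(\overline\epsilon/2)\cosh L.
\]
Using $\overline\epsilon/2 = \pi/2 - \epsilon/2$, one has $\cos(\overline\epsilon/2) = \sin(\epsilon/2)$ and $\sin(\overline\epsilon/2) = \cos(\epsilon/2)$, so the identity collapses to $\tfrac12 = \sin(\epsilon/2)\cos(\epsilon/2)(\cosh L - 1) = \tfrac12 \sin\epsilon\,(\cosh L - 1)$, yielding $\cosh L = 1 + 1/\sin\epsilon$.

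For uniqueness, I would argue that any hexagon $H'$ satisfying the hypotheses admits a $\Z/3$-rotational symmetry $V_i \mapsto V_{i+2}$, built step by step from the SAS-rigidity of hyperbolic triangles applied to the corner triangles $V_{i-1}V_iV_{i+1}$; its fixed point $O'$ together with the three long diagonals $V_1V_4$, $V_3V_6$, $V_5V_2$ then decomposes $H'$ into six triangles which, sharing the same angles and corresponding sides as $T$, are congruent to $T$, forcing $H'$ to be isometric to the constructed hexagon. The main point requiring care is precisely this last step — passing from the given angle-and-length data to the dihedral symmetry — since none of the symmetries of $H(\epsilon)$ are built into the hypotheses; once this symmetry is extracted, the decomposition and the law-of-cosines computation are routine.
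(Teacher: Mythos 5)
Your proposal is correct and follows essentially the same route as the paper. Both arguments build $H(\epsilon)$ by tiling a neighbourhood of a central vertex with six copies of the triangle $T$ with angles $\pi/3$, $\epsilon/2$, $\overline\epsilon/2$ (your reflection-group phrasing is just another way of saying ``alternately glue three copies of $T$ and three of $\overline T$''), extract $\cosh L = 1 + 1/\sin\epsilon$ from the hyperbolic law of cosines in $T$, and prove uniqueness by first using SAS to show the three corner triangles $A_{2i-1}A_{2i}A_{2i+1}$ of an arbitrary such hexagon are isometric, so that the inner triangle $A_1A_3A_5$ is equilateral, and then decomposing about its centre to recover $T$ up to isometry.
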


\bigskip
\noindent
{\it Proof of the existence of $H(\epsilon)$ and of the formula for  $\cosh L$.}

\noindent Let $T$  be an oriented triangle whose angles are
$\epsilon/2$, ${\overline \epsilon}/2$ and $\pi/3$
and let $X$ be the vertex
at the $\pi/3$-angle.
Let  $\overline T$ be the triangle isometric  to $T$ with opposite orientation. Let $H(\epsilon)$ 
be the hexagon obtained by alternately gluing three copies  of $T$ and three copies of $\overline T$ around $X$. This hexagon satisfies the required conditions, so the existence is proved.\\

By the law of cosines for the triangle $T$, we have

$$\cosh L=
{\cos \epsilon/2 \cos{\overline \epsilon}/2
+\cos \pi/3 \over \sin \epsilon/2 \sin{\overline \epsilon}/2}$$

 $$=1+ {1\over 2 \sin \epsilon/2 \cos \epsilon/2}$$   

$$=1+ {1\over \sin\epsilon}.$$ 

\bigskip
\noindent {\it Proof of the uniqueness.}

\noindent Let $H$ be a hexagon whose sides all have the same length l, and whose angles are alternately $\epsilon$ and $\overline \epsilon$. Let $(A_i)_{i\in\Z/6\Z}$ be the six vertices, arranged in a cyclic order. Moreover, we will assume that the angles at $A_2$, $A_4$ and $A_6$ are $\overline\epsilon$. \\

The triangles $A_1A_2A_3$, $A_3A_4A_5$ and $A_5A_6A_1$  have the same angles at the point $A_2$, $A_4$ and $A_6$, and the two sides originating from these points have the same length $l$. Hence they are isometric. It follows that
the triangle $A_1A_3A_5$ is equilateral.\\

Let $X$ be the center of $A_1A_3A_5$, and let $T'=A_1XA_2$ and ${\overline T}'=A_2A_3X$. Since $T'$ and ${\overline T}'$ have same side lengths they are isometric, with opposite orientations. Hence $H$ is obtained by alternately gluing three copies  of $T'$ and three copies of $\overline T'$ around $X$. It follows that the angles of $T'$  are $\epsilon/2$, ${\overline \epsilon}/2$ and $\pi/3$. Therefore $T'$ is isometric the the triangle $T$ of the existence proof. Hence $H$ is isometric to $H(\epsilon)$, proving uniqueness. \qed
\bigskip

We can alternately assign the colours blue and red to the sides of $H(\epsilon)$, as follows:
If $S$, $S'$ are consecutive sides with $\angle \mathrm{S\,S'} =\epsilon$, then $S$ is red and $S'$ is blue, or, quickly speaking,
$\angle \mathrm{red\,blue} =\epsilon$. For $\epsilon\neq \pi/2$ this colouring is unique.

\bigskip
\noindent
{\it 2.4 The Saccheri quadrilateral in $H(\pi/2)$}

\noindent In the hexagon $H(\pi/2)$, let $S_1, S_2$ and $S_3$ be three consecutive sides and let $D$ be the arc joining the  vertex of $S_1\setminus S_2$ and the vertex of $S_3\setminus S_2$. Thus $S_1$, $S_2$, $S_3$ and $D$ are the four sides of a Saccheri quadrilateral. Set $L'=l(D)$.

\begin{lemma}\label{saccheri}   We have 

\centerline{$L'<2L(\pi/2)$.}
\end{lemma}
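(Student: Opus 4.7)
The plan is to compute $L'$ in closed form and then compare with $2L$ using the double-angle identity for $\cosh$. From Lemma \ref{hexagon} applied at $\epsilon = \pi/2$ we know $\cosh L = 2$, so the target inequality $L' < 2L$ is equivalent to $\cosh L' < \cosh(2L) = 2\cosh^2 L - 1 = 7$. Since the hexagon is regular right-angled, the quadrilateral $A_1A_2A_3A_4$ (with $S_i = A_iA_{i+1}$ and $D = A_1A_4$) has right angles at the interior vertices $A_2$ and $A_3$ and three equal sides of length $L$; it is truly a Saccheri quadrilateral, and all its data is explicit.

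To compute $\cosh L'$, I would introduce the diagonal $A_1A_3$ to split the quadrilateral into two right hyperbolic triangles. In the isosceles right triangle $A_1A_2A_3$ (right angle at $A_2$, legs of length $L$), the hyperbolic Pythagorean theorem gives
$$\cosh |A_1A_3| = \cosh^2 L = 4,$$
hence $\sinh|A_1A_3| = \sqrt{15}$. By the hyperbolic law of sines, the base angle $\alpha := \angle A_2 A_3 A_1 = \angle A_2 A_1 A_3$ satisfies
$$\sin \alpha = \frac{\sinh L}{\sinh|A_1A_3|} = \frac{\sqrt 3}{\sqrt{15}} = \frac{1}{\sqrt 5}.$$
In the second triangle $A_1A_3A_4$, the angle at $A_3$ is $\pi/2 - \alpha$ (complement, since the full hexagon angle at $A_3$ is $\pi/2$), and $|A_3A_4| = L$. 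The hyperbolic law of cosines then gives
$$\cosh L' = \cosh|A_1A_3|\,\cosh L - \sinh|A_1A_3|\,\sinh L\,\cos(\pi/2 - \alpha) = 4\cdot 2 - \sqrt{15}\cdot\sqrt{3}\cdot\frac{1}{\sqrt 5} = 5.$$

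Comparing, $\cosh L' = 5 < 7 = \cosh(2L)$, and since $\cosh$ is strictly increasing on $[0,\infty)$ we conclude $L' < 2L$. There is no real obstacle here: the only thing to watch is the correct bookkeeping of the angle at $A_3$ (one must subtract $\alpha$ from $\pi/2$, not add, which is why the $\cos(\pi/2-\alpha) = \sin\alpha$ substitution is used). Everything else is a one-line application of standard hyperbolic trigonometry once the diagonal is drawn.
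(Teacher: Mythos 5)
Your proof is correct and reaches the same numerical conclusion as the paper, namely $\cosh L' = 5 < 7 = \cosh 2L$, but by a different decomposition of the quadrilateral. The paper drops the perpendicular from the midpoint of the base $S_2$ to the summit $D$, splitting the Saccheri quadrilateral into two isometric Lambert quadrilaterals, and then invokes the Saccheri identity $\sinh(L'/2)=\cosh L\,\sinh(L/2)$ in a single step (there is a misprint in the paper's display, $\cosh L/2$ where it should read $\cosh L$, but the following line $\sinh^2 L'/2 = 4\sinh^2 L/2$ is the correct consequence once $\cosh L=2$ is inserted). You instead draw the diagonal $A_1A_3$ and chain the hyperbolic Pythagorean theorem and law of sines in the isosceles right triangle $A_1A_2A_3$ with the law of cosines in $A_1A_3A_4$. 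Both routes exploit the same mirror symmetry of the quadrilateral --- the paper through its axis of symmetry, you through the equality of the two base angles $\alpha$ --- and both land on $\cosh L'=5$. The paper's version is shorter once the Saccheri formula is taken as known; yours is more elementary, relying only on basic triangle identities, at the cost of slightly more bookkeeping around the angle $\pi/2-\alpha$ at $A_3$.
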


\begin{proof}  Set $L=L(\pi/2)$. The perpendicular line at the middle of $S_2$ cuts
the Saccheri quadrilateral  into two isometric Lambert quadrilaterals. It follows that

\centerline{$\sinh L'/2=\cosh L/2\,\sinh L/2 $,}

\noindent or equivalently
$\sinh^2 L'/2=4 \sinh^2 L/2$, which implies that  $\cosh L'=5$. On another hand  $\cosh 2L=2 \cosh^2 2L -1=7$. It follows that  $L'<2L$.
 \end{proof}


\section{The three-holed sphere $\Sigma(k,\epsilon)$ and the pair of  pants $\Pi(k,\epsilon)$}

\noindent
From now on, let $k\geq 3$ be an integer and
let $\epsilon\in ]0,\pi[$. We will often think of $\epsilon$ as being an acute angle, as in the figures of this section.

In this section, we will define a pair of pants
$\Pi(k,\epsilon)$ endowed with a certain tesselation.
We will  first consider a three-holed sphere 
$\Sigma(k,\epsilon)$ which has
two geodesic boundary components $C$ and $C'$ and one 
piecewise   geodesic boundary component $\mathcal{D}$.
Since the inner angles of  $\mathcal{D}$ are $<\pi$,
it is homotopic to a unique geodesic $D$.
Then  $\Pi(k,\epsilon)$ is the pair of pants lying in
 $\Sigma(k,\epsilon)$, whose boundary components are
 $C, C'$ and $D$.

\bigskip
\noindent
{\it 3.1 The tesselated three-holed sphere $\Sigma(k,\epsilon)$}

\noindent
Let us start with the planar graph $\Gamma$ represented in Figure 2.
\begin{figure}[ht!]
\centering
\includegraphics[width=90mm]{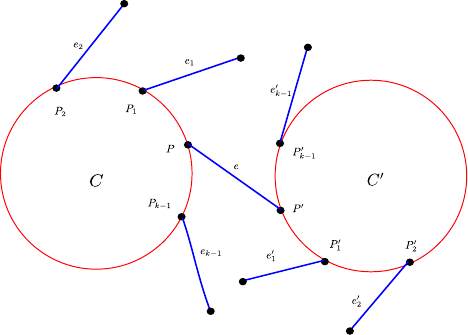}
\caption{The graph $\Gamma$}
\end{figure}

\noindent It consists of two cycles $C$ and $C'$ of length $k$, which are connected by an edge $e$ with endpoints $P\in C$ and $P'\in C'$. Starting from $P$ in an anticlockwise direction, the other points of $C$ are denoted by $P_1,\dots, P_{k-1}$. For each $1\leq i\leq k-1$ there is an additional edge $e_i$, pointing outwards from $C$. One  endpoint of $e_i$ is  $P_i$ and the other endpoint has valency one. The vertices $P'_1,\dots, P'_{k-1}$ of $C'$ and the edges $e'_i$ are defined similarly.\\

The edges of the cycles $C$ and $C'$ are coloured in red, the other edges are coloured in blue. Now we will attach $2k-2$-hexagons $H(\epsilon)$ along the edges of $\Gamma$ to obtain a hyperbolic surface $\Sigma(\epsilon,k)$. It will be convenient to define a metric on $\Gamma$ by requiring that each edge, red or blue, has length $L=1+1/\sin\epsilon$. \\

First we attach two copies of $H(\epsilon)$ along five consecutive edges of $\Gamma$. The first copy,  denoted 
$H_1(\epsilon)$, is glued along the edges $e_{k-1},\,P_{k-1}P,\,e,\,P'P'_1$ and $e'_1$. The
second copy, denoted $H_2(\epsilon)$, is glued along the edges $e'_{k-1},\,P'_{k-1}P',\,e,\,PP_1$ and $e_1$, see Figure \ref{step1}.

\begin{figure}[ht!]
\centering
\includegraphics[width=90mm]{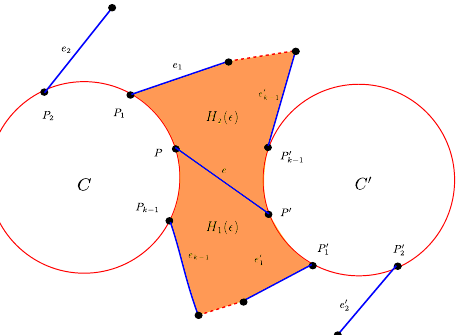}
\caption{Gluing two the first two copies $H_1(\epsilon)$ and $H_2(\epsilon)$ to 
$\Gamma$}
\label{step1}
\end{figure}

Next we attach the remaining $2k-4$  hexagons along three edges.
Indeed, for each integer $i$ with $1\leq i\leq k-2$, we glue one copy of $H(\epsilon)$ along the edges
$e_i,\,P_iP_{i+1}$ and $e_{i+1}$ and, symmetrically, we glue another copy along $e_i',\,P_i''P_{i+1}$ and $e_{i+1}'$, see Figure \ref{step2}.\\

\begin{figure}[ht!]
\centering
\includegraphics[width=90mm]{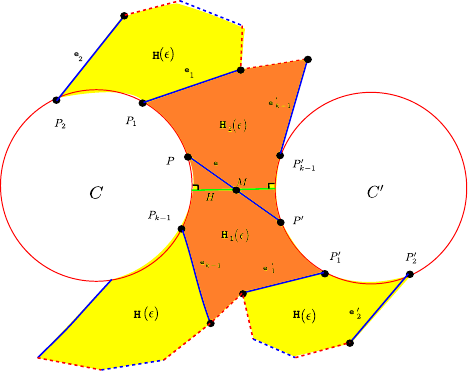}
\caption{Gluing the remaining hexagons 
$H(\epsilon)$ to 
$\Gamma$}
\label{step2}
\end{figure}

It is tacitly assumed that all gluings respect the metric and the colours of edges.\\

This defines a hyperbolic surface $\Sigma(k,\epsilon)$ which is homeomorphic to the 3-holed sphere $S_{0,3}$. Two boundary boundary components $C$ and $C'$ of $\Sigma(k,\epsilon)$ are geodesics. The third component, call it $\mathcal{D}$, is piecewise geodesic.

\begin{lemma}\label{geodesicD} The boundary component $\mathcal{D}$ is freely homotopic to a unique geodesic $D$.
Moreover
\begin{enumerate}
\item $D$ lies in the interior of $\Sigma(k,\epsilon)$,

\item $D$ meets each arc $e_i$ and $e'_i$ exactly once, and

\item $D$ does not intersect $e$.

\end{enumerate}
\end{lemma}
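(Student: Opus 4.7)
The plan is to handle assertions (1) and (2) via a standard CAT$(-1)$ argument after verifying local convexity of $\mathcal{D}$, and to deduce (3) and (4) by constructing an explicit piecewise-geodesic test curve $\mathcal{D}'$ freely homotopic to $\mathcal{D}$ with the desired intersection pattern, then invoking minimality of geodesic intersection.

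I first compute the inner angles of $\mathcal{D}$. Using the red/blue colouring rule following Lemma \ref{hexagon}, at the free endpoint of any antenna $e_i$ (or $e'_i$) two hexagons meet and contribute inner angles $\epsilon$ and $\overline\epsilon$ summing to $\pi$, so $\mathcal{D}$ is locally geodesic at that vertex; at every other corner of $\mathcal{D}$---an internal vertex of a single hexagon between two of its free sides---the inner angle is a single $\epsilon$ or $\overline\epsilon$, strictly less than $\pi$. Every inner angle of $\mathcal{D}$ therefore lies in $(0,\pi]$, so $\mathcal{D}$ is locally convex and $\Sigma(k,\epsilon)$ is a compact CAT$(-1)$ surface with convex boundary. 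The essential free homotopy class $[\mathcal{D}]$ contains a unique closed geodesic representative $D$, which proves (1). Since $\mathcal{D}$ is not itself a geodesic, $D\neq\mathcal{D}$; the fact that two simple geodesics in a CAT$(-1)$ surface cannot cobound a bigon yields $D\cap C=D\cap C'=\emptyset$, and a universal-cover argument (lift $\mathcal{D}$ to an open convex piecewise geodesic in $\HP$; the complete geodesic asymptotic to its endpoints at infinity lies in the open convex side) gives $D\cap\mathcal{D}=\emptyset$, proving (2).

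Next I construct $\mathcal{D}'$. In each ``small'' hexagon $h_i$ glued along $e_i,P_iP_{i+1},e_{i+1}$, I take the geodesic chord from the midpoint of $e_i$ to the midpoint of $e_{i+1}$, and symmetrically in each $h'_i$; in $H_1$ (resp.\ $H_2$) I take the geodesic chord from the midpoint of $e_{k-1}$ to the midpoint of $e'_1$ (resp.\ $e_1$ to $e'_{k-1}$). Concatenating these chords in the cyclic order induced by $\mathcal{D}$ gives a simple piecewise-geodesic closed curve $\mathcal{D}'\subset\Sigma(k,\epsilon)$ meeting each antenna transversely once (at its midpoint) and disjoint from $e$ and from every cycle edge, and it is freely homotopic to $\mathcal{D}$ because inside each hexagon the chord cobounds a disk with the corresponding arc of $\mathcal{D}$. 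The only nonformal check---and the main obstacle in the proof---is that the chord in $H_1$ does not meet $e$: this uses the reflection symmetry of the regular hexagon $H(\epsilon)$ about the axis joining the midpoints of $e$ and of the opposite free side. The chord is invariant under this reflection, hence meets the axis perpendicularly at a single interior point, and convexity together with the fact that $e_{k-1},e'_1$ are adjacent to the free side places that perpendicular-crossing strictly on the free-side half of the hexagon, away from $e$; the same argument applies to $H_2$. (If this hexagon-geometric step were to feel insufficiently rigorous, one could instead take $\mathcal{D}'$ inside $H_1,H_2$ to hug the free side arbitrarily closely, still obtaining a single transverse crossing of each antenna and no crossing of $e$---sufficient input for what follows.)

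To finish (3) and (4), observe that $D$ and each of the simple geodesic arcs $e,e_i,e'_i$ intersect minimally in their homotopy classes, since no embedded bigon between simple geodesics exists in negative curvature. Hence
\[
|D\cap e_i|\leq|\mathcal{D}'\cap e_i|=1,\qquad |D\cap e'_i|\leq 1,\qquad |D\cap e|\leq|\mathcal{D}'\cap e|=0.
\]
Since $D\cup\mathcal{D}$ cobounds an embedded annulus in $\Sigma(k,\epsilon)$, $D$ separates $\mathcal{D}$ from $C\cup C'$; the arc $e_i$ runs from $P_i\in C$ to $\mathcal{D}$, so $|D\cap e_i|$ must be odd, forcing $|D\cap e_i|=1$, and symmetrically $|D\cap e'_i|=1$; the arc $e$ joins $C$ to $C'$, both on the same side of $D$, so $|D\cap e|=0$.
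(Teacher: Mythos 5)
Your proof establishes the existence, uniqueness, and interiority of $D$ by essentially the same angle-counting and convexity argument as the paper, but then takes a genuinely different route for the intersection claims (2) and (3). The paper appeals to a bigon-non-increasing homotopy $(\mathcal{D}_t)_{t\in[0,1]}$ from the piecewise-geodesic boundary $\mathcal{D}_0=\mathcal{D}$ to the geodesic $\mathcal{D}_1=D$ and reads off that the intersection numbers $i(d,\mathcal{D}_t)$ with each blue edge $d$ are constant. You instead build an explicit interior test representative $\mathcal{D}'$, one geodesic chord per tile, bound $|D\cap e_i|$ and $|D\cap e|$ above by $|\mathcal{D}'\cap e_i|=1$ and $|\mathcal{D}'\cap e|=0$ using minimality of geodesic intersection (absence of bigons), and upgrade the antenna bound to equality by a parity argument across the annulus between $D$ and $\mathcal{D}$. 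Your route is somewhat more self-contained: it avoids asserting the existence of a monotone isotopy, and it sidesteps the slight awkwardness in the paper's argument that the endpoint $Q_i$ of $e_i$ actually lies on $\mathcal{D}_0$ itself (so $i(e_i,\mathcal{D}_0)$ needs a moment's interpretation). Both approaches are legitimate.

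Two small inaccuracies, both repairable. First, $H(\epsilon)$ for $\epsilon\neq\pi/2$ has no reflection symmetry through the midpoints of a pair of opposite sides: such a reflection would carry a vertex of angle $\epsilon$ onto its neighbour of angle $\overline\epsilon$. The equiangular-alternating hexagon's only reflection symmetries are through pairs of opposite vertices. Luckily you never needed that symmetry: the chord in $H_1$ is a segment between two boundary points of the convex region $H_1$, neither lying on the side $e$, so its interior lies in the open interior of $H_1$ and cannot meet $e$; and your ``hug-the-free-side'' fallback works too. Second, the reason $D\cap C=D\cap C'=\emptyset$ is not the absence of bigons: $D$ and $C$ lie in distinct free homotopy classes, which rules out bigons but not transverse crossings. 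The correct reason is that $D$ is contained in $\Sigma(k,\epsilon)$, which lies entirely on one side of the boundary geodesic $C$, so a geodesic meeting $C$ would be tangent to it and hence coincide with it. Your universal-cover argument for $D\cap\mathcal{D}=\emptyset$, by contrast, is exactly right.
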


\begin{proof} The curve $\mathcal{D}$ is piecewise geodesic. It is alternatively composed of $2(k-2)$ blue arcs and $2(k-2)$ red arcs. Since $k\geq 3$, $\mathcal{D}$ is not a geodesic. The inner angles of $\mathcal{D}$ are each less than $\pi$, therefore $\mathcal{D}$ is freely homotopic to a unique geodesic $D$ lying in the interior of $\Sigma(k,\epsilon)$.

Let $d$ be a blue edge. There is a 1-parameter family of curves $(\mathcal{D}_t)_{t\in[0,1]}$, realizing a homotopy from $\mathcal{D}=\mathcal{D}_0$ to $D=\mathcal{D}_1$, such that the number of bigons formed by $d$ and $D=\mathcal{D}_t$ does not increase. Since there are no bigons formed by $d$ and $\mathcal{D}_0$, the geometric intersection numbers $i(d,\mathcal{D}_t)$ is constant, which proves the last two claims.
\end{proof}

\bigskip
\noindent 
{\it 3.2 The pair of pants $\Pi(k,\epsilon)$ and its
central octogon ${\bf Q}(\epsilon)$}

\noindent The geodesic $D$ decomposes $\Sigma(k,\epsilon)$ into two pieces. The component
$\Pi(k,\epsilon)\subset \Sigma(k,\epsilon)$  with geodesic  curves $C$, $C'$ and $D$ is a pair of pants.

The blue arcs decompose $\Pi(k,\epsilon)$ into two hexagons adjacent to the central edge $e$ 
and $2k-4$ quadrilaterals. Let  
${\bf H}_1(\epsilon):=H_1(\epsilon)\cap 
\Pi(k,\epsilon)$
and 
${\bf H}_2(\epsilon):=H_2(\epsilon)\cap 
\Pi(k,\epsilon)$ be the two hexagons of the decomposition. Their union 
${\bf Q}(\epsilon):={\bf H}_1(\epsilon)
\cup {\bf H}_2(\epsilon)$ is a convex octogon.\\

Let $H$ be the unique perpendicular arc joining $C$ and $C'$ and let $S\in C$ and $S'\in C'$ be its endpoints. For $\epsilon=\pi/2$, we have $H=e$. Otherwise $H$ and $e$ meet as shown in the next lemma.

\begin{lemma}\label{meeting} The arc $H$ lies in the 
octogon ${\bf Q}(\epsilon)$. When $\epsilon\neq\pi/2$,
$H$ and $e$ intersect at their midpoint.

Moreover we have   
\begin{enumerate}
\item $d(P,S)<L/2$, and

\item for $\epsilon<\pi/2$, the point $P$ belongs to ${\bf H}(\epsilon)$.
\end{enumerate}
\end{lemma}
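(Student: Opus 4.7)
The plan is to lift to the universal cover and apply Lemma \ref{inequality}. Choose a lift $\tilde{e}$ of $e$ in $\HP$ with endpoints $\tilde{P} \in \tilde{C}$ and $\tilde{P}' \in \tilde{C}'$, where $\tilde{C}$ and $\tilde{C}'$ are the corresponding lifts of $C$ and $C'$. These two lines are disjoint and satisfy $d_{\HP}(\tilde{C},\tilde{C}') > 0$ since $C, C'$ are distinct boundary geodesics of $\Pi(k,\epsilon)$. By the gluing convention for the hexagons $H(\epsilon)$, the angle $\angle \tilde{C}\tilde{e}$ at $\tilde{P}$ equals $\epsilon$ on the side lying in the lift of ${\bf H}_1(\epsilon)$ and $\overline{\epsilon}$ on the other side, and similarly at $\tilde{P}'$. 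Hence $\tilde{e}$ is the $\epsilon$-edge of $\tilde{C}$ and $\tilde{C}'$ in the sense of Section 2.2.

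Applying Lemma \ref{inequality}: the common perpendicular $\tilde{H}$ of $\tilde{C},\tilde{C}'$ and the arc $\tilde{e}$ meet at their common midpoint $M$, and $d(\tilde{P},\tilde{S}) = d(\tilde{P}',\tilde{S}') < L/2$, where $\tilde{S},\tilde{S}'$ denote the feet of $\tilde{H}$. On projecting back to $\Pi(k,\epsilon)$, this yields the intersection statement and claim (1). Claim (2) follows from the orientation part of Lemma \ref{inequality} (for $\epsilon<\pi/2$, $\tilde{S}\tilde{P}$ is positively oriented on $\tilde{C}$) once one reconciles the convention of Section 2 (the line $\Delta$ is oriented so that $\Delta'$ lies on the right) with the cyclic labelling $P, P_1, \ldots, P_{k-1}$ of $C$ used in the graph $\Gamma$.

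To verify that $H$ lies in ${\bf Q}(\epsilon)$, I would split $\tilde{H}$ at $M$ and show that each half lies in the lift of one of the hexagons ${\bf H}_i(\epsilon)$. Each ${\bf H}_i(\epsilon)$ is geodesically convex: the four inner angles at the vertices inherited from the original $H_i(\epsilon)$ equal $\epsilon$ or $\overline{\epsilon}$, while the two remaining vertices arise from the transverse intersection of $D$ with the blue edges $e_i$ or $e_i'$, so those angles are strictly less than $\pi$. Since $d(\tilde{P},\tilde{S}) < L/2 < L$, the foot $\tilde{S}$ lies inside the red arc of $\tilde{C}$ of length $L$ that bounds the relevant hexagon, while $M$ lies on the blue interior edge $\tilde{e}$ of that hexagon. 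Convexity then forces the geodesic segment from $\tilde{S}$ to $M$ to be contained in that hexagon; the other half of $\tilde{H}$ is treated symmetrically.

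The main obstacle I anticipate is bookkeeping: correctly identifying $\tilde{e}$ as the $\epsilon$-edge (rather than the $\overline{\epsilon}$-edge) and tracking orientations in claim (2) both require carefully reconciling the abstract half-plane conventions of Section 2 with the concrete combinatorial structure of the graph $\Gamma$ and the gluing data for the hexagons. The geometric content, by contrast, is essentially an immediate corollary of Lemma \ref{inequality} once the lifting is set up.
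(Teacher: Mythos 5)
Your argument is correct and follows essentially the same path as the paper: transfer the picture to $\HP$ (the paper isometrically embeds the convex octagon ${\bf Q}(\epsilon)$ directly, using that all its inner angles are less than $\pi$; you instead lift via the universal cover of $\Pi(k,\epsilon)$), apply Lemma~\ref{inequality} to identify $e$ as the $\epsilon$-edge and locate the feet of the common perpendicular, and conclude by convexity. The remaining differences — lifting versus direct embedding, and arguing convexity hexagon-by-hexagon for ${\bf H}_1(\epsilon)$, ${\bf H}_2(\epsilon)$ rather than invoking the stated convexity of ${\bf Q}(\epsilon)$ in a single step — are cosmetic and do not change the substance.
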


\begin{proof} 
The inner angles of ${\bf Q}(\epsilon)$ are less than $\pi$, hence there is an isometric embedding  

\centerline{$\pi:{\bf Q}(\epsilon)\to \HP$.}

\noindent Let $\Delta$ and $\Delta'$ be the lines of $\HP$ containing, respectively, the arcs $\pi\bigl(C\cap {\bf Q}(\epsilon)\bigr)$ and $\pi\bigl (C'\cap {\bf Q}(\epsilon)\bigr)$. Let $\overline H$ be the common perpendicular arc to $\Delta$ and $\Delta'$ and let $\overline S:= \overline H\cap \Delta$ and $\overline S':= \overline H\cap \Delta'$ be its feet. By Lemma \ref{inequality}, we have

\centerline{$d_\HP(\pi(P),\overline S)<L/2$.}

\noindent 
Since $\pi\bigl(C\cap {\bf Q}(\epsilon)\bigr)$ is the geodesic arc of $\Delta$, centered at $\pi(P)$, of length $2L$, it follows that $\overline S$ is on the boundary of $\pi\bigl(C\cap {\bf Q}(\epsilon)\bigr)$. Similarly $\overline S'$ is on the boundary of $\pi\bigl(C\cap {\bf Q}(\epsilon)\bigr)$. By convexity, the arc $\overline H$ lies in $\pi\bigl({\bf Q}(\epsilon)\bigr)$. \\

Hence $H=\pi^{-1}(\overline H)$ belongs to ${\bf Q}(\epsilon)$. The other claims follow from Lemma \ref{inequality} and the fact that $\pi$ is an isometry.
\end{proof}

\section{Trigonometry in the pair of pants $\Pi(k,\epsilon)$}\label{Pants}

\noindent
Let $k\geq 3$ be an integer and let $\epsilon\in ]0,\pi[$.  

The pair of pants $\Pi(k,\epsilon)$ from the previous section is endowed with an $\epsilon$-edge $e$ joining $C$ and $C'$, whose endpoints are $P\in C$ and $P'\in C'$. Recall that the length of $e$ is
$L=\arcosh(1+1/\sin\,\epsilon)$.
Let $H$ (resp. $h$, resp. $h'$) be the unique common perpendicular arc to $C$ and $C'$ (resp. to $C$ and $D$, resp. to $C'$ and $D$). Cutting
$\Pi(k,\epsilon)$ along $H\cup h\cup h'$ provides 
the usual decomposition of the pair of pants into two right-angled hexagons.

 \bigskip\noindent
 {\it 4.1 Formula for $\cosh H$} 

\noindent
As usual, we will use the same letter for an arc and for its length. As a matter of notation, let $S\in C$ and $S'\in C'$ be the endpoints of
$H$.

\begin{lemma}\label{chH} We have

$$\cosh H=1+\sin\epsilon.$$

 \end{lemma}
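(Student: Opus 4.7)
The plan is to exploit Lemma \ref{meeting}, which tells us that $H$ and $e$ cross at their common midpoint $M$, and then apply hyperbolic right-triangle trigonometry. Once $M$ is available, the four points $S$, $P$, $M$ form a right-angled hyperbolic triangle in which we know the hypotenuse ($L/2$) and an acute angle ($\epsilon$), which pins down all remaining quantities, in particular the leg $SM = H/2$.

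First I would dispose of the easy case $\epsilon = \pi/2$: there $H = e$ by the very definition of the $\epsilon$-edge, so $\cosh H = \cosh L = 1 + 1/\sin(\pi/2) = 2 = 1 + \sin(\pi/2)$, which matches the claimed formula. From now on I would assume $\epsilon \neq \pi/2$.

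Next I would set up the triangle $T = SPM$. The side $SP$ lies on $C$, the side $SM$ is half of the perpendicular $H$, and the side $PM$ is half of $e$. The angle at $S$ is $\pi/2$ because $H \perp C$; the angle at $P$ is $\angle_P(e,C) = \epsilon$ by definition of the $\epsilon$-edge. By Lemma \ref{inequality} applied inside the octagon ${\bf Q}(\epsilon)$ (via the isometric embedding $\pi$ used in Lemma \ref{meeting}), the point $M$ is the common midpoint of $H$ and $e$, so
\[
SM = H/2, \qquad PM = L/2.
\]
The hyperbolic sine rule for the right-angled triangle $T$ (opposite side over hypotenuse equals $\sin$ of the angle) then gives
\[
\sinh(H/2) \;=\; \sinh(L/2)\,\sin\epsilon.
\]

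Finally I would square this and use the half-angle identity $\cosh x - 1 = 2\sinh^2(x/2)$ together with Lemma \ref{hexagon}. Since $\cosh L - 1 = 1/\sin\epsilon$, we get
\[
\cosh H - 1 \;=\; 2\sinh^2(H/2) \;=\; 2\sinh^2(L/2)\,\sin^2\epsilon \;=\; (\cosh L - 1)\sin^2\epsilon \;=\; \sin\epsilon,
\]
which is exactly the claim. I do not expect any real obstacle: the geometry was already reduced by Lemma \ref{meeting} to a right triangle in $\HP$, and the remaining work is purely formulaic. The only point requiring a little care is justifying that the trigonometric identity really applies inside $\Pi(k,\epsilon)$ rather than in $\HP$ alone, but the isometric embedding $\pi : {\bf Q}(\epsilon) \to \HP$ from the proof of Lemma \ref{meeting} handles this automatically, since $S$, $P$, $M$ all lie in the convex octagon ${\bf Q}(\epsilon)$.
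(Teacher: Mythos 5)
Your argument is correct and is essentially identical to the paper's proof: same triangle $PSM$ with midpoint $M$ from Lemma \ref{meeting}, same sine law, same half-angle identities combined with Lemma \ref{hexagon}, and the same treatment of the $\epsilon=\pi/2$ case.
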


\begin{proof} When $\epsilon=\pi/2$, we have $e=H$
and $\cosh  H=\cosh L=2$. Therefore we can assume that $\epsilon\neq \pi/2$.\\

By Lemma \ref{meeting}, $e$ and $H$ belong to the octogon
${\bf Q}(\epsilon)$ and intersect at their midpoint $M$. It follows that
 $SM=H/2$ and $PM=L/2$.
By the sine law applied to the triangle $PSM$, 
we have

$$\sinh H/2=\sin \epsilon \sinh L/2.$$  

\noindent
From the identities $2\sinh^2 H/2=\cosh H -1$ and
$2\sinh^2 L/2=\cosh L -1$, it follows that

$$\cosh H=1+ \sin^2\epsilon\, (\cosh L -1).$$

\noindent By Lemma \ref{hexagon}, we have 
$\cosh L-1= 1/\sin\epsilon$, from which it  follows
that $\cosh H=1+\sin\epsilon$. 
\end{proof}

\bigskip
\noindent
{\it 4.2 Conventions concerning the asymptotics of angle functions}

\noindent In what follows, we will consider
analytic functions $f:]0,\pi[\to\R$. In order to
study their asymptotic growth near  $0$,
we will use the following simplified notations. For any pair of functions
$A,\,B: ]0,\pi[\to\R$,
the expression

$$A\sim B$$

\noindent means that $$\lim_{\epsilon\to 0}{A\over B}=1.$$ Moreover the expression

$$A\sim *B$$

\noindent means that $A\sim a B$ for some positive real number $a$. Similarly, the expression
$A<<B$ means that

$$\lim_{\epsilon\to 0}{A\over B}=0. $$

\bigskip\noindent
{\it 4.3 Length estimates}

  \begin{lemma}\label{length}  We have 
 
\centerline{ $\cosh L\sim *\epsilon^{-1}$,} 
 
\centerline{ $H\sim *\epsilon^{1/2}$,}
 
 \centerline{$d(S,P)=L/2+o(1)$. }
 
 \noindent Moreover, the lengths  $h$ and $h'$
 are equal and we have
 
 \centerline{$ h\sim *\epsilon^{k-1\over 2}$.}
 
 \end{lemma}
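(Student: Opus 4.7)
The plan is to handle the four assertions of the lemma in increasing order of difficulty, using the trigonometric identities already established together with the standard right-angled hexagonal decomposition of $\Pi(k,\epsilon)$.

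The first three estimates require only quick computations. Since $\sin\epsilon\sim\epsilon$, the formula $\cosh L=1+1/\sin\epsilon$ from Lemma \ref{hexagon} gives $\cosh L\sim *\epsilon^{-1}$ immediately. From Lemma \ref{chH}, $\cosh H-1=\sin\epsilon$, and combining with the elementary $\cosh H-1\sim H^2/2$ as $H\to 0$ yields $H\sim *\epsilon^{1/2}$. For $d(S,P)$, I apply the hyperbolic Pythagorean theorem in the right triangle $PSM$ (right-angled at $S$, with hypotenuse $PM$ of length $L/2$ and leg $SM$ of length $H/2$, by Lemmas \ref{inequality} and \ref{meeting}), obtaining $\cosh(d(S,P))=\cosh(L/2)/\cosh(H/2)$; since $H\to 0$ and $\coth(L/2)\to 1$, a first-order expansion gives $d(S,P)=L/2+O(H^2)=L/2+o(1)$. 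The equality $h=h'$ follows from the evident reflection symmetry of the planar graph $\Gamma$ in Figure 2 exchanging the cycles $C$ and $C'$ (and each $P_i$ with $P_i'$): this symmetry interchanges $H_1(\epsilon)$ with $H_2(\epsilon)$ and analogously for the remaining pairs of hexagons, so it extends to an isometry of $\Pi(k,\epsilon)$ that must interchange $h$ and $h'$.

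The core of the proof is the estimate for $h$. I would cut $\Pi(k,\epsilon)$ along $H,\,h,\,h'$ into two isometric right-angled hexagons with cyclic side lengths $C/2,H,C'/2,h',D/2,h$, noting that $C=C'=kL$ since each cycle of $\Gamma$ is a concatenation of $k$ red edges of length $L$. Two hexagonal laws of cosines then give
\begin{align*}
\cosh(D/2) &= -\cosh^2(kL/2)+\sinh^2(kL/2)\cosh H \;=\; -1+\sinh^2(kL/2)\sin\epsilon,\\
\cosh h &= \frac{\cosh(kL/2)\bigl(1+\cosh(D/2)\bigr)}{\sinh(kL/2)\sinh(D/2)} \;=\; \coth(kL/2)\coth(D/4),
\end{align*}
where the last simplification uses the half-angle identities $1+\cosh(D/2)=2\cosh^2(D/4)$ and $\sinh(D/2)=2\sinh(D/4)\cosh(D/4)$.

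From $\cosh L\sim 1/\epsilon$ one reads $e^L\sim 2/\epsilon$, hence $\sinh^2(kL/2)\sin\epsilon\sim 2^{k-2}\epsilon^{-(k-1)}$; this gives $\cosh(D/2)\sim 2^{k-2}\epsilon^{-(k-1)}$ and in particular $e^{-D/2}\sim \epsilon^{k-1}/2^{k-1}$. Using $\coth x-1\sim 2e^{-2x}$ as $x\to\infty$, the dominant exponential in $\coth(kL/2)\coth(D/4)-1$ comes from the $\coth(D/4)$ factor, since $e^{-D/2}$ beats $e^{-kL}\sim(\epsilon/2)^k$ by a factor $\epsilon^{-1}$; this yields $\cosh h-1\sim \epsilon^{k-1}/2^{k-2}$. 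Combining with $\cosh h-1\sim h^2/2$, we conclude $h\sim *\epsilon^{(k-1)/2}$. The main technical obstacle is correctly identifying this dominant exponential: both $L$ and $D$ tend to infinity as $\epsilon\to 0$, and one must carefully compare their rates of divergence to see that the $D/4$ factor, rather than the $kL/2$ factor, controls the leading behaviour of $h$.
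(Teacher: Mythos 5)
Your proof is correct, and on the first three estimates it follows essentially the same path as the paper, up to small cosmetic differences (the paper gets $d(S,P)=L/2+o(1)$ from the triangle inequality $\vert d(P,S)-d(P,M)\vert\leq d(S,M)=H$ rather than from a Pythagorean expansion in the right triangle $PSM$). For $h=h'$ the paper applies the law of sines inside a single right-angled hexagon, $\sinh h'/\sinh(kL/2)=\sinh h/\sinh(kL/2)$, while you invoke a symmetry of $\Pi(k,\epsilon)$. Note that the relevant isometry is more naturally the rotation by $\pi$ about the midpoint of $e$ (which the paper explicitly uses in the proof of Lemma \ref{increasing}), rather than a planar reflection: a reflection reverses orientation and would send $P_i\mapsto P'_{k-i}$, not $P_i\mapsto P'_i$, and it is not immediate that it lifts to an isometry of the tiled surface. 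Either way, any isometry exchanging $C\leftrightarrow C'$ while stabilising $D$ exchanges the two common perpendiculars $h$ and $h'$, so your conclusion stands.

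Where you genuinely diverge from the paper is the core estimate $h\sim *\epsilon^{(k-1)/2}$. The paper first derives $1+\cosh d=\sinh^2(kL/2)(\cosh H-1)$ (where $d=l(D)/2$), giving $\cosh d\sim *\epsilon^{1-k}$, and then applies the law of sines $\sinh h=\sinh H\,\sinh(kL/2)/\sinh d$, so that $h\sim\sinh h\sim *\epsilon^{1/2}\cdot\epsilon^{-k/2}\cdot\epsilon^{k-1}=*\epsilon^{(k-1)/2}$ falls out directly, with no need to compare competing exponential rates. You instead push through to the closed-form identity $\cosh h=\coth(kL/2)\coth(D/4)$ and extract the asymptotics by identifying the dominant correction to $1$, observing that $e^{-D/2}\sim\epsilon^{k-1}/2^{k-1}$ beats $e^{-kL}\sim(\epsilon/2)^k$. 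Both routes are valid; the paper's law-of-sines computation is a touch more economical, since the estimate for $h$ comes packaged as a single quotient of already-estimated hyperbolic sines, whereas your $\coth$ identity is an appealing exact formula but shifts the work into a careful comparison of two exponentially small quantities.
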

 
 \begin{proof} Since by definition
  $\cosh L=1+1/\sin \epsilon$,  we have
 $\cosh L\sim *\epsilon^{-1}$. By Lemma 
 \ref{chH}, we have $H\sim *\epsilon^{1/2}$ .
 By Lemma \ref{meeting}, $H$ and $e$ intersect at their midpoint $M$. Since 

\centerline{$\vert d(P,S)-d(P,M)\vert \leq d(S,M)=H$}

\noindent we also have
$\vert d(P,S)-L/2\vert \in o(1)$, which proves the third claim.\\
  
 The perpendicular arcs $H, h$ and $h'$ decompose  
 the pair of pants $\Pi(k,\epsilon)$ into two isometric   right-angled hexagons 
 (see \cite{Bu}, Proposition 3.1.5) and let  ${\bf A}$ be one of them.\\
 
 Set $d=l(D)/2$. In an anticlockwise direction, the hexagon ${\bf A}$ has sides $h', kL/2, H, kL/2, h$ and $d$. By the law of sines
 
 $${\sinh kL/2\over \sinh h'}=
 {\sinh kL/2\over \sinh h}$$
 
 \noindent and therefore $h=h'$. \\
 
 In order to prove the final statement, we first estimate $\cosh d$. By the law of cosines, we have
 
 $$\cosh H={\cosh^2 kL/2 + \cosh d
 \over \sinh^2 kL/2}$$
 
 $$=1+{1+\cosh d\over \sinh^2 kL/2}.$$
 
 It follows that
 
 $$1+\cosh d=\sinh^2 kL/2 (\cosh H -1),$$
 
 \noindent and 
 $\cosh d\sim * e^{kL} H^2\sim *\epsilon^{1-k}$.
 By the law of sines, we have
 
 $$\sinh h={\sinh H \sinh kL/2\over \sinh d}$$
 
 \noindent and therefore 
 $h\sim \sinh h\sim * \epsilon^{(k-1)/2}$.
 \end{proof}
 
  \bigskip\noindent
 {\it 4.4 The angles $\omega_i$ and $\omega_i'$}
 
 \noindent
 By Lemma \ref{geodesicD}, the geodesic $D$
 meets the arcs $e_i$ and $e_i'$ exactly once each,
 so we can define
 
 \centerline{$\omega_i:=\angle De_i$ and 
 $\omega'_i:=\angle De'_i$.}
 
 \begin{lemma}\label{increasing} We have
 \begin{enumerate}
 \item $\omega_i=\omega'_i$, for any $1\leq i\leq k-1$.
 
\item $\omega_1<\omega_2<\dots<\omega_{k-1}$.
 \end{enumerate}
\end{lemma}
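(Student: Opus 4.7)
The plan is to deduce part~(1) from the $(C\leftrightarrow C')$-symmetry of the construction and part~(2) from an application of Lemma~\ref{pencil}.

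For part~(1), the construction of $\Sigma(k,\epsilon)$ in Section~3 is manifestly symmetric under swapping the data $(P,P_i,e_i,H_1)$ with $(P',P'_i,e'_i,H_2)$: all edge lengths, colours, and inner angles of the hexagonal tiling are preserved, so this swap extends to an isometric involution $\sigma$ of $\Pi(k,\epsilon)$ with $\sigma(C)=C'$, $\sigma(e_i)=e'_i$, and $\sigma(D)=D$ setwise (as $D$ is the unique geodesic in the third free homotopy class). Being an isometry, $\sigma$ preserves the angle of the intersecting arcs $D$ and $e_i$, giving $\omega_i=\omega'_i$.

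For part~(2), I would first show that every blue edge $e_i$ meets $C$ at the same angle on the same side. In each hexagon $H(\epsilon)$ (Lemma~\ref{hexagon}) the inner angles alternate between $\epsilon$ and $\overline\epsilon$ around the boundary. At each vertex $P_i\in C$, two hexagons of $\Pi(k,\epsilon)$ meet inside the pants, one on either side of $e_i$, with inner angles $\alpha_i$ and $\beta_i$ at $P_i$. Since $C$ is a smooth geodesic, $\alpha_i+\beta_i=\pi$; since $\beta_i$ and $\alpha_{i+1}$ are two consecutive inner angles of the single hexagon between $P_i$ and $P_{i+1}$, they satisfy $\beta_i+\alpha_{i+1}=\pi$ as well. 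Combining, $\beta_{i+1}=\beta_i$, so the angle is constant in $i$ and may be called $\epsilon$.

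Next I would lift the $C$-side strip of hexagons to $\HP$ along a single consistent chain: the lifted $C$-arcs all lie on a common line $\tilde C$, the points $\tilde P_1,\ldots,\tilde P_{k-1}$ are in order along $\tilde C$, and the lifts $\tilde e_i$ all belong to the pencil $\mathcal{F}_\epsilon(\tilde C)$. The geodesic $D$ lifts to a line $\tilde D$ disjoint from $\tilde C$ (as $C$ and $D$ are disjoint simple closed geodesics), and by Lemma~\ref{geodesicD} it crosses each $\tilde e_i$ exactly once. Applying Lemma~\ref{pencil}(1) with $\Delta=\tilde C$, $\Delta'=\tilde D$, and pencil $\mathcal{F}_\epsilon(\tilde C)$ yields that $\omega(\tilde P_i)=\angle\tilde D\,\tilde e_i$ is strictly monotonic in $i$, when $\tilde C$ is oriented so that $\tilde D$ lies on its right. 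By the convention of Section~1.4 this is the direction along $C$ keeping the interior of $\Pi(k,\epsilon)$ on the right; in Figure~2, where the hexagons are attached outside the disk bounded by $C$, this matches the anticlockwise labeling of $P_1,\ldots,P_{k-1}$. Hence $\omega_1<\omega_2<\cdots<\omega_{k-1}$.

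The main obstacle is the combinatorial bookkeeping in the second paragraph: verifying that the alternation of hexagon types around $C$ indeed puts all blue edges in a single $\epsilon$-pencil on one side, and matching the boundary-orientation convention with the anticlockwise indexing so that the monotonicity runs in the stated direction rather than in reverse.
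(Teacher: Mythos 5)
Your approach coincides with the paper's. For part (1), the swap you describe is precisely the paper's isometric rotation by $\pi$ about the midpoint $M$ of $e$; it is worth making explicit that this isometry is orientation-\emph{preserving}, since an orientation-reversing isometry would carry the anticlockwise angle $\omega_i=\angle D\,e_i$ to $\pi-\omega'_i$ rather than to $\omega'_i$ (your phrase ``being an isometry, $\sigma$ preserves the angle'' glosses over this). For part (2), the paper isometrically embeds into $\HP$ the quadrilateral $\mathbf{Q}_0$ bounded by $e_1$, $e_{k-1}$, an arc of $C$, and an arc of $D$, and applies Lemma~\ref{pencil}(1) to the image; this is exactly the region your ``$C$-side strip'' lift produces. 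Your angle-chasing argument showing all $e_i$ meet $C$ at a constant angle is correct but superfluous: the colour convention $\angle(\mathrm{red},\mathrm{blue})=\epsilon$ built into $H(\epsilon)$ already places $\pi(e_1),\dots,\pi(e_{k-1})$ in $\mathcal{F}_\epsilon(\Delta)$. Finally, the orientation bookkeeping you flag does resolve as stated: with the hexagons attached to the exterior of $C$ in Figure~2, the Section~1.4 rule ``interior of $\Pi(k,\epsilon)$ on the right'' (which is the Lemma~\ref{pencil} hypothesis, since $D$ lies on that side) picks out the anticlockwise direction, visiting $P_1,\dots,P_{k-1}$ in that order and giving $\omega_1<\cdots<\omega_{k-1}$.
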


\begin{proof} By definition of $\Sigma(k,\epsilon)$ and $\Pi(k,\epsilon)$, there is a isometric rotation of angle $\pi$ around the midpoint $M$ of the arc $PP'$. It follows that $\omega_i=\omega'_i$ for any $i$.\\

The pair of arcs $e_1$ and $e_{k-1}$ decompose $\Pi(k,\epsilon)$ into two connected components. Let ${\bf Q}_0$ be the contractible component,
which is a quadrilateral whose sides are
$e_1$, $e_{k-1}$, an arc of $C$ and an arc of $D$.
It is larger than the quadrilateral ${\bf Q}$ of Figure \ref{manycolours}, because the top edge is $e_1$ instead of $h$.\\

 Let 
$\pi: {\bf Q}_0\to\HP$ be a isometric embedding.
Set $\Delta'=\pi(D\cap {\bf Q})$ and let $\Delta$ be the line of $\HP$ containing the arc  $\pi(C\cap {\bf Q}_0)$. The arcs $\pi(e_1),\dots, \pi(e_{k-1})$ belongs to the $\epsilon$-pencil $\mathcal{F}_\epsilon(\Delta)$ of the line $\Delta$. The second claim therefore follows from Lemma \ref{pencil} (1).
\end{proof}

\bigskip\noindent
 {\it 4.5 Angle estimates}
 
 \begin{lemma}\label{angle} 
 For any $i=1,2,\dots,k-1$, we have
 
 $$\lim_{\epsilon\to 0} \omega_i=
 \lim_{\epsilon\to 0} \omega'_i=0 .$$
 \end{lemma}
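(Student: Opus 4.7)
By Lemma \ref{increasing}(1) we have $\omega_i = \omega_i'$, so it suffices to show $\omega_i \to 0$ as $\epsilon\to 0$ for each $i = 1,\dots, k-1$. The plan is to embed $\mathbf{Q}_0$ isometrically into $\HP$ via the map $\pi$ of Lemma \ref{increasing}, derive a trigonometric inequality for $\omega_i$ in terms of $\epsilon$, the length of the perpendicular arc $h$ from $C$ to $D$ in $\Pi(k,\epsilon)$, and the position of $\pi(P_i)$ on the line $\Delta$ containing $\pi(C\cap\mathbf{Q}_0)$, and then apply the length estimates of Lemma \ref{length}.

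Let $\Delta,\Delta'$ be the lines containing $\pi(C\cap\mathbf{Q}_0)$ and $\pi(D\cap\mathbf{Q}_0)$. By convexity of $\mathbf{Q}_0$, the arc $h$ (whose endpoints $T_C, T_D$ lie on $\partial \mathbf{Q}_0$) is contained in $\overline{\mathbf{Q}_0}$, so $\pi(h)$ realizes the common perpendicular of $\Delta,\Delta'$ and $d_\HP(\Delta,\Delta') = h$. The standard decomposition $\Pi(k,\epsilon) = \mathbf{A}\cup\overline{\mathbf{A}}$ along $H\cup h\cup h'$ forces the foot $T_C$ of $h$ and the foot $U_H$ of $H$ to be antipodal on $C$; combined with $d_C(P, U_H) = L/2 + o(1)$ from Lemma \ref{length}, one obtains $s_i := d_\HP(\pi(P_i), S) = d_C(T_C, P_i) \leq (k-1)L/2 + o(1)$ for every $i$, where $S := \pi(T_C)$. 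For the key trigonometric estimate, place $\Delta$ along the imaginary axis with $S = i$ in the half-plane model; then $\Delta'$ and each pencil line $F(\pi(P_i))$ are Euclidean semicircles with centres on the real axis, and a direct computation of their angle of intersection yields
$$|1-\cos\omega_i| \;\leq\; |1 - \cos\epsilon\cosh h| \;+\; |\sinh s_i|\,\sin\epsilon\,\sinh h.$$

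By Lemma \ref{length}, $\sinh h \sim * \epsilon^{(k-1)/2}$, $\sin\epsilon\sim *\epsilon$, and $e^L\sim * \epsilon^{-1}$, hence $\sinh((k-1)L/2)\sim * \epsilon^{-(k-1)/2}$, and therefore
$$|\sinh s_i|\,\sin\epsilon\,\sinh h \;\leq\; \sinh\bigl((k-1)L/2 + o(1)\bigr)\cdot\sin\epsilon\cdot\sinh h \;\sim *\epsilon\to 0.$$
Combined with $|1-\cos\epsilon\cosh h| = O(\epsilon^2)\to 0$, this forces $\cos\omega_i\to 1$, so $\omega_i\to 0$. The main obstacle is the trigonometric inequality, which requires a careful half-plane computation and tracking of orientation conventions; the auxiliary facts (position of $T_C$ on $C$ and containment $h\subset\overline{\mathbf{Q}_0}$) follow from the hexagonal decomposition together with convexity of $\mathbf{Q}_0$.
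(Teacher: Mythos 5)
Your approach is genuinely different from the paper's and, in outline, sound; I'll note both what it buys and where it is incomplete.

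\textbf{Comparison with the paper.} The paper first invokes Lemma~\ref{increasing}(2) to reduce everything to $\omega_{k-1}$, then works in the quadrilateral $\mathbf{Q}$ bounded by $h$ and $e_{k-1}$ (so that the perpendicular $h$ is a \emph{boundary} arc, which sidesteps any containment question), introduces the diagonal $v = P_{k-1}\Omega$, and runs a two-triangle analysis with auxiliary angles $\epsilon^\pm,\gamma^\pm$. You instead embed the larger quadrilateral $\mathbf{Q}_0$ (bounded by $e_1$ and $e_{k-1}$) and treat every $\omega_i$ at once via a single birectangular-quadrilateral identity. I checked your claimed relation by a half-plane computation: placing $\Delta$ on the imaginary axis with the foot of the common perpendicular at $i$, the pencil geodesic through $ie^{s_i}$ and the line $\Delta'$ are Euclidean semicircles whose angle of intersection $\omega_i$ satisfies (up to the sign ambiguities your absolute values absorb)
$$\cos\omega_i \;=\; \cosh h\,\cos\epsilon \;\mp\; \sinh h\,\sin\epsilon\,\sinh s_i,$$
from which your inequality $|1-\cos\omega_i|\le|1-\cos\epsilon\cosh h|+|\sinh s_i|\sin\epsilon\sinh h$ follows by the triangle inequality. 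Combined with $\sinh h\sim *\epsilon^{(k-1)/2}$ and $\sinh s_i=O(\epsilon^{-(k-1)/2})$ this does give $1-\cos\omega_i=O(\epsilon)$, matching the paper's $\omega_{k-1}\sim *\epsilon^{1/2}$ but more uniformly. So the route is cleaner: one exact trigonometric identity replaces the diagonal-and-two-triangles cascade.

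\textbf{Where the proposal has gaps.} Two steps carry the real weight and are only asserted. First, the displayed trigonometric inequality is the entire content of the lemma, yet you explicitly defer its derivation (``requires a careful half-plane computation and tracking of orientation conventions''); the derivation must actually be written down for the proof to stand, even if (as I verified) it does work out. Second, you use $d_\HP(\Delta,\Delta')=h$ and the identification $s_i = d_C(T_C,P_i)$, which require that \emph{both} feet $T_C$ and $T_D$ of $h$ lie on $\partial\mathbf{Q}_0$ so that $\pi(h)$ is the common perpendicular of $\Delta$ and $\Delta'$. You check the position of $T_C$ (via antipodality with $U_H$ and the estimate $d_C(P,U_H)=L/2+o(1)$), but you say nothing about $T_D$: why should the foot of $h$ on $D$ lie on the arc $D\cap\mathbf{Q}_0$ between $\Omega_1$ and $\Omega_{k-1}$? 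Unlike $T_C$, this is not immediate from the right-angled-hexagon decomposition, and without it you cannot conclude $\pi(h)\subset\overline{\mathbf{Q}_0}$ nor $d_\HP(\Delta,\Delta')=h$. This is precisely the kind of issue the paper avoids by choosing $\mathbf{Q}$ (bounded by $h$ itself) rather than $\mathbf{Q}_0$. Either prove $T_D\in\partial\mathbf{Q}_0$, or switch to the paper's quadrilateral, where your cleaner identity would still apply.
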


\begin{proof} By Lemma \ref{increasing}, it is enough to show that $\lim_{\epsilon\to 0} \omega_{k-1}=0$.\\

The pair of arcs $h$ and $e_{k-1}$ cut $\Pi(k,\epsilon)$ into two connected components, where ${\bf Q}$ is the contractible component,
 as shown in Figure \ref{manycolours}. Then ${\bf Q}$ is a convex quadrilateral whose vertices are the endpoints of $h$, namely   $N:=h\cap C$ and $\Omega:=h\cap D$ and the endpoints of $e_{k-1}$, namely $\Omega_{k-1}:=e_{k-1}\cap D$ and $P_{k-1}$. \\

Let $v$ be the diagonal of ${\bf Q}$ joining $P_{k-1}$ and $\Omega$. Set

$\epsilon^-=\angle Cv$, 
$\epsilon^+=\angle ve_{k-1}$,
$\gamma^-=\angle hv$ and 
$\gamma^+=\angle vD$. By definition we have

\centerline{$\epsilon^-+\epsilon^+=\epsilon$, and
$\gamma^-+\gamma^+=\pi/2$.}

\begin{figure}[ht!]
\centering
\includegraphics[width=90mm]{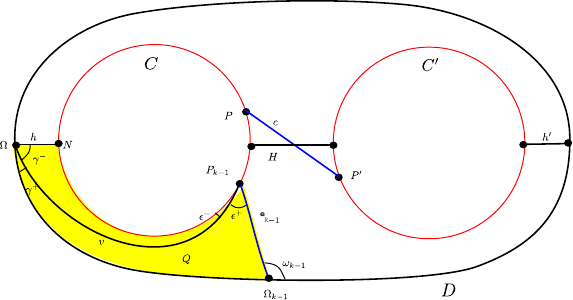}
\caption{This figure illustrates the notation from the proofs of Lemmas \ref{angle} and \ref{increasing}.}
\label{manycolours}
\end{figure}

First, we look at the trigonometry of the triangle
$P_{k-1}N\Omega$. Set $a=d(N,P_{k-1})$. We have
$a=(k/2-1)L + d(S,P)$. By Lemma \ref{chH},
we have $d(S,P)=L/2+o(1)$, therefore 
$a=uL+o(1)$, where $u=(k-1)/2$. It follows that

$$\cosh a\sim *\epsilon^{-u}.$$

Since $\cosh v=\cosh a\cosh h$, it follows from
Lemma \ref{length} that

$$\cosh v\sim\cosh a\sim *\epsilon^{-u}.$$

\noindent By the sine law, we have 
$\sin \epsilon^-=\sinh h/\sinh v$. It follows from
Lemma \ref{length} and the previous estimate that
$\epsilon^-\sim *\epsilon^{2u}$. Since $k\geq 3$,
we have $\epsilon^-<<\epsilon$ and therefore

$$\epsilon^+\sim\epsilon.$$

By combining the cosine and sine laws, we have
$\cos \gamma^-=\sin\epsilon^- \cosh a
=\sinh h \cosh a/\sinh v$, and therefore

$$\cos \gamma^-\sim *\epsilon^u.$$

Next, we will look at the trigonometry of the triangle $P_{k-1}\Omega\Omega_{k-1}$. Since
$\sin \gamma^+=\cos\gamma^-$, we have

$$\gamma^+\sim\sin\gamma^+\sim *\epsilon^u.$$

Using the cosine law, we have 
$\cos{\overline\omega_{k-1}}
=\sin\epsilon^+ \sin\gamma^+ \cosh v
-\cos \epsilon^+ \cos\gamma^+)$. Adding one on each side and using that 
$\cos{\overline\omega_{k-1}}=-\cos\omega_{k-1}$, we obtain

$$1-\cos\omega_{k-1}
=\sin\epsilon^+ \sin\gamma^+ \cosh v +
(1-\cos \epsilon^+ \cos\gamma^+).$$

We will now estimate the right term of the previous identity. 
Using a Taylor expansion, it is clear
that 

$$1-\cos \epsilon^+ \cos\gamma^+
\in O(\epsilon^2)+O(\epsilon^{2u})=O(\epsilon^2).$$

\noindent On the  other hand, we have

$$\sin\epsilon^+ \sin\gamma^+ \cosh v \sim *\epsilon.$$

\noindent Hence we have $1-\cos\omega_{k-1}\sim*\epsilon$,
i.e.

$$\omega_{k-1}\sim *\epsilon^{1/2},$$

\noindent and therefore we have proved that
$\lim_{\epsilon\to 0} \omega_{k-1}=0$.

\end{proof} 
 
\section{Sanki's paths and curve duality}\label{Gloop}

\noindent
For the whole section, we assume given an oriented topological closed manifold $\mathcal{S}$ 
of genus $g$ endowed with an isomorphism
$\rho:\Pi_g\to\pi_1({\mathcal S})$, defined modulo 
the inner conjugations. It will be called the {\it marking} of the topological surface $\mathcal{S}$.

We will consider a set $\mathrm{Tess}(\mathcal{S})$ of hexagonal tesselations of $\mathcal{S}$, which are defined by a set $\mathcal{C}_R$ of red curves and a set $\mathcal{C}_B$ of blue curves. 
Following an idea of {\cite S}\cite{FB20},
we define, for each $\tau\in \mathrm{Tess}(\mathcal{S})$, a path, called a Sanki's path, $\sigma:]0,2\pi[\rightarrow \mathcal{T}_{g}$. 
Intuitively, Sanki's path are infinitesimal analogs
of Penner's construction \cite{P88} of quasi-Anosov homeomorphisms.

When $\tau$ satisfies some additional properties, we define, for each blue curve $B$ a dual object $B^*$, which is a linear combination of three curves with coefficients  $\pm1/2$. Of course, $B^*$ is not
a multicurve, but its length function $L(B^*)$ is well defined.
The first result of the paper is Theorem \ref{sanki}, showing a kind of duality between $B$ and $B^*$. It is expressed in terms   of the Poisson
bracket $\{L(A),L(B^*)\}$ relative to the Weil-Petersson symplectic form \cite{wolpert83}.

\bigskip\noindent
{\it 5.1 The set of hexagonal tesselations
$\mathrm{Tess}(\mathcal{S})$ } 

\noindent
Let $\mathcal{H}$ be an oriented topological hexagon whose six sides are
alternatively coloured in red and blue. Strictly speaking $\mathcal{H}$ is a closed disc whose boundary is divided into six components, but the terminology hexagon is more suggestive.\\

Let $\mathrm{Tess}(\mathcal{S})$ be the set of all tesselations $\tau$ of $\mathcal{S}$ satisfying the following two axioms:
 
 \smallskip
(AX1)  The tiles are homeomorphic to $\mathcal{H}$ and they are glued pairwise along edges of the same colour.

\smallskip
(AX2)  Each vertex of the tesselation has valence four.

\smallskip
The last axiom implies that each vertex is the endpoint of four edges, which are alternately red and blue.
The graph consisting of red edges is a disjoint union of cycles. Those cycles are called the
{\it red curves of the tesselation} and the set of red curves is denoted $\mathcal{C}_{red}$. Similarly, we define the 
{\it blue curves of the tesselation} and the set $\mathcal{C}_{blue}$ of blue curves.  The set $\Curv(\tau):=
\mathcal{C}_{red}\cup \mathcal{C}_{blue}$ is called the set of {\it curves of the tesselation}.

\bigskip\noindent
{\it 5.2 Sanki paths}

\noindent We will now define the Sanki's path of a tessalation $\tau\in \mathrm{Tess}(\mathcal{S})$.
Let $\epsilon\in]0,\pi[$. Define a metric on  the $1$-skeleton $\tau_1$ of $\tau$  by requiring that all edges have length $L$. 
Recall that $L=\arcosh(1+1/\sin\epsilon)$ is 
the side lengths of  the hexagon $H(\epsilon)$ defined in Subsection 2.3.\\

For each closed face $f$ of the tesselation, let $\phi_f:H(\epsilon)\to f$ be a homeomorphism  such that its restriction to the boundary $\delta f:\partial H(\epsilon)\to\partial f$ preserves the metric and the colour of the edges.

A tesselation of $\mathcal{S}$ is obtained where each tile is endowed with a hyperbolic structure.
Along each  edge  of $\tau_1$, two geodesic arcs have been glued isometrically. Around each vertex of
$\tau_1$, the four angles are alternatively $\epsilon$ and $\overline{\epsilon}$, hence their sum
 is $2\pi$. By Theorem 1.3.5 of \cite{Bu}, there is a hyperbolic metric on $\mathcal{S}$ extending the  metric of the tiles. Together with the marking $\rho$ of ${\mathcal S}$, we obtain a well defined
 marked hyperbolic surface   $S_\tau(\epsilon)$.

 The idea of deforming right-angled regular polygons by polygons with angles of value alternatively $\epsilon$ and $\pi-\epsilon$ first appeared in \cite{S} and it was used in \cite{FB20}. Therefore the corresponding  path $\sigma_{\tau}:]0,\pi[\rightarrow \mathcal{T}_{g}$, $\epsilon\mapsto S_{\tau}(\epsilon)$ will be called the {\it Sanki's path} of the tessalation $\tau$.
 Since the function $\cosh L=1+1/\sin\epsilon$ is analytic, the path $\sigma_\tau$ is analytic. \\
 
 It should be noted that, around each vertex the colours, blue or red, and the angles, $\epsilon$ or $\overline\epsilon$, of the edges alternate. Therefore the blue curves and the red curves are geodesics with respect to the hyperbolic metric on $S_\tau(\epsilon)$.\\

\bigskip\noindent
{\it 5.3 $k$-regular tesselations}

\noindent
For a closed oriented surface $\mathcal{S}$, $(AX1)$ and $(AX2)$ is the minimal set of axioms required to define the Sanki's path. We will now define more axioms. The axiom (AX3) will ensure that the curves have the same length, while the axioms (AX4) and (AX5) are connected with the duality construction.

Let $k\geq 2$ be an integer and let $\mathcal{S}$ be a closed surface. A tesselation $\tau\in \Tess(\mathcal{S})$ is called a {\it $k$-regular tesselation} iff it satisfies the following  axiom 

\smallskip
(AX3) Each curve of $\tau$, blue or red, consists of exactly $k$ edges.

\smallskip\noindent Denote by Tess$(\mathcal{S},k)$ the set of all $k$-regular tesselations.
For any $k$-regular tesselation $\tau$, we will consider  two additional axioms. The first
axiom is  

\smallskip
(AX4) A blue edge and a red curve meet at most once.

\smallskip\noindent
Assume now that $\tau\in\mathrm{Tess}(\mathcal{S},k)$ satisfies (AX4).
Let $R$ be a red curve, let $b, b'$ be two blue edges adjacent to $R$ and let $N$ be a small regular neigborhood of $R$. Since $\mathcal{S}$ is oriented, $N\setminus R$ consists of two open   annuli, $N^{\pm}$. By axiom (AX4), $b$ has only one endpoint in $R$, therefore $b$ intersect either $N^+$ or $N^-$. Similarly, $b'$ intersect either $N^+$ or $N^-$.
We say that $b, b'$ are {\it adjacent on the same side} of 
$C$ if they both intersect $N^+$ or if they both intersect $N^-$. Our last axiom is

\smallskip\noindent
(AX5) Two distinct blue edges are adjacent on the same side of at most one red curve.

\smallskip
\noindent Denote by
$\mathrm{Tess}_{45}(\mathcal{S},k)$  the set of $k$-regular tesselations satisfying the axioms (AX4) and (AX5).

\bigskip\noindent
{\it 4.4 The isometric embedding
$\pi_b:\Pi(k,\epsilon)\to S_\tau(\epsilon)$ } 

\noindent
 From now on, assume that $k\geq 3$. Let $\tau\in \mathrm{Tess}_{45}(\mathcal{S},k)$. In order to define the duality, we first associate to each blue edge a pair of pants $\Pi(k,\epsilon)\subset S_\tau(\epsilon)$.

 Let $b$ be a blue edge with endpoints $Q$ and $Q'$, and let $R$ and $R'$ be the red curves passing through $Q$ and $Q'$.
By axiom (AX4), the two curves $R$ and $R'$ are distinct, so the graph 
$\Gamma_0:=R\cup R'\cup b$
is a union of two circles connected by an edge.
Since $\mathcal{S}$ is oriented, a small 
normal open neighbourhood $N$ of $\Gamma_0$ is a thickened eight.
Then $R\cup R'$ cuts $N$ into three components,
two of them are homeomorphic to annuli and the third one, call it $\Omega$, contains $b$.\\

In a planar representation of $\Gamma_0$, $\Omega$ is the exterior of $R\cup R'$. Since $R$ and $R'$ are
boundary components of $\Omega$, these curves inherit an orientation.
By axiom (AX3), $R$ contains $k$ vertices of the tesselation.
Starting from $Q$ in the anticlockwise direction, the other $k-1$ points of $R$ are 
denoted by $Q_1,\dots, Q_{k-1}$. For each $1\leq i\leq k-1$ let $b_i$ be the blue edge starting at 
$Q_i$ on the same side as $b$.
The points 
$Q'_1,\dots, Q'_{k-1}$ of 
$R'$  and the edges
$b'_i$ are defined similarly. Adding the edges
$b_i, b'_i$ to the graph $\Gamma_0$, we obtain a graph $\overline\Gamma$. \\

Let $\epsilon\in]0,\pi[$ and recall that $S_\tau(\epsilon)$ is the tesselated surface $S_{g}$ representing a point in $\mathcal{T}_{g}$.\\

Let $\Gamma$ be the graph defined in Section 4.1. There is a local isometry $\pi:\Gamma\to \overline\Gamma$ such that
$\pi(P)=Q$, $\pi(P_i)=Q_i$, $\pi(e_i)=b_i$, $\pi(C)=R$, $\pi(P')=Q'$, $\pi(P'_i)=Q'_i$, $\pi(e_i)=b_i$ and $\pi(C')=R'$. Clearly, it can be extended uniquely to a local isometry $\pi: \Pi(k,\epsilon)\to S_\tau(\epsilon)$. Let $\pi_{b}$ be its restriction to $\Pi(k,\epsilon)$.\\

\begin{lemma}\label{isometry} The  map 
$\pi_{b}: \Pi(k,\epsilon)\to S_\tau(\epsilon)$ is an isometric embedding.
\end{lemma}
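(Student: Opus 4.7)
The map $\pi_b$ is a local isometry by construction, so only injectivity needs to be established. My plan is first to extend the local isometry $\pi : \Gamma \to \overline\Gamma$ to a local isometry $\tilde\pi : \Sigma(k,\epsilon) \to S_\tau(\epsilon)$ by sending each of the $2k-2$ hexagonal tiles of $\Sigma(k,\epsilon)$ isometrically onto the unique hexagonal tile of $S_\tau(\epsilon)$ whose boundary in $\overline\Gamma$ is the image under $\pi$ of the boundary of the given tile. This extension is well-defined because two tiles of $\Sigma(k,\epsilon)$ sharing an edge of $\Gamma$ have images sharing the corresponding edge of $\overline\Gamma$, and at each interior vertex of $\Gamma$ the alternating $\epsilon/\overline\epsilon$ angle pattern of the incident tiles reproduces the pattern of the tesselation $\tau$. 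Since $\Pi(k,\epsilon) \subset \Sigma(k,\epsilon)$ and $\pi_b$ is the restriction of $\tilde\pi$ to $\Pi(k,\epsilon)$, it suffices to prove $\tilde\pi$ is injective.

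The heart of the argument is to show that the $2k-2$ hexagonal tiles of $\Sigma(k,\epsilon)$ map to $2k-2$ pairwise distinct tiles of $S_\tau(\epsilon)$; combined with injectivity of $\tilde\pi$ on each tile and on the edges and vertices of $\Gamma$, this forces $\tilde\pi$ itself to be injective. Axiom (AX4) immediately implies that the blue edges $b, b_1, \ldots, b_{k-1}, b'_1, \ldots, b'_{k-1}$ of $\overline\Gamma$ are pairwise distinct, since any coincidence would force a single blue edge to meet $R \cup R'$ at more than one point. Suppose, for contradiction, that two distinct tiles $T, T'$ of $\Sigma(k,\epsilon)$ share a common image $T^*$ in $S_\tau(\epsilon)$. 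The three blue edges of $T^*$ are then hit with total multiplicity six from the blue boundary edges of $T$ and $T'$, so at least two pairings must identify a blue edge of $T$ with a blue edge of $T'$. A case analysis depending on the type of $T, T'$ (either the two central hexagons $H_1, H_2$ containing $e$, or one of the side hexagons glued along three edges) shows, using (AX4), that such a forced identification produces two distinct blue edges in $\overline\Gamma$ each meeting both $R$ and $R'$, adjacent to $R$ on the same side and adjacent to $R'$ on the same side. This is precisely the configuration forbidden by (AX5), giving the desired contradiction.

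The main obstacle is this case analysis in the injectivity step. The subtlety is that (AX4) alone rules out only pointwise identifications of blue edges and is insufficient to prevent two full hexagons from collapsing onto one; axiom (AX5) is tailored exactly to exclude the remaining collapses. The most delicate case is $(T,T') = (H_1, H_2)$, because these two hexagons share the central blue edge $e$ and are each glued to $\Gamma$ along five edges, yielding the largest number of forced coincidences to track. Once injectivity of $\tilde\pi$ is verified in every case, restricting to $\Pi(k,\epsilon)$ gives the isometric embedding $\pi_b$.
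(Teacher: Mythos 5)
Your overall plan (extend $\pi$, show injectivity, restrict) is close to the paper's, but the first step contains a genuine gap that the rest of your argument does not repair. You assert that (AX4) alone makes $b, b_1, \ldots, b_{k-1}, b'_1, \ldots, b'_{k-1}$ pairwise distinct, ``since any coincidence would force a single blue edge to meet $R \cup R'$ at more than one point.'' But (AX4) forbids a blue edge from meeting a \emph{single} red curve more than once, not the union $R\cup R'$; indeed $b$ itself meets $R\cup R'$ at the two points $Q$ and $Q'$. Concretely, (AX4) does rule out $b_i=b_j$ ($i\neq j$), $b'_i=b'_j$, and $b=b_i$ or $b=b'_j$, since each would force some blue edge to hit $R$ (or $R'$) twice. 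It is silent on $b_i=b'_j$: that edge would meet $R$ once at $Q_i$ and $R'$ once at $Q'_j$, which (AX4) allows. This is exactly where (AX5) is needed: by construction $b$ and $b_i$ are adjacent on the same side of $R$, and if $b_i=b'_j$ then $b_i$ is also adjacent on the same side of $R'$ as $b$, so $b$ and $b_i$ are two distinct blue edges adjacent on the same side of two red curves, contradicting (AX5). The paper runs this combined (AX4)+(AX5) argument to show that $Q_i$ is the only endpoint of $b_i\cap\Omega$, from which pairwise distinctness follows.

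This matters structurally: if $b_i=b'_j$, then $\pi$ is already non-injective on the $1$-skeleton (the disjoint arcs $e_i\cap\Pi$ and $e'_j\cap\Pi$ have the same image), without any two hexagonal tiles necessarily having the same image. So your plan — establish edge distinctness by (AX4), then reserve (AX5) for a case analysis of colliding tiles — cannot catch this failure. Once edge distinctness is established using both axioms, injectivity on tiles follows by simple counting (each tile of $\Pi(k,\epsilon)$ is bounded by at least two blue edges of $\Gamma$, so colliding tiles must share a blue edge, hence coincide), and your elaborate tile case analysis is unnecessary. A secondary caution: you deduce injectivity of $\tilde\pi$ on all of $\Sigma(k,\epsilon)$ from control over tiles and the edges and vertices \emph{of} $\Gamma$ only, but $\partial\Sigma(k,\epsilon)$ contains $\mathcal{D}$, whose edges are not in $\Gamma$ and whose images you have not constrained; the paper avoids this by proving injectivity only on $\Pi(k,\epsilon)\subset\Sigma(k,\epsilon)\setminus\mathcal{D}$, which is all the lemma needs.
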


\begin{proof} Since $b$ joins $R$ and $R'$, it follows from Axiom (Ax4) that $R$ and $R'$ are distinct. By Axioms (Ax4) and (Ax5), the point $Q_i$, respectively $Q'_j$  is the unique endpoint of $b_i\cap\Omega$, respectively of $b'_j\cap\Omega$. Hence the blue edges $b$, $b_i$ and $b'_j$ are all distinct. For any two edges $e\neq e'$ of 
$\Gamma\cap \Pi(k,\epsilon)$, we therefore have $\pi(e)\neq \pi(e')$.\\

Let $F,F'$ be two faces of $\Pi(k,\epsilon)$ such that $\pi(F)= \pi(F')$. Since each face $F$ or $F'$ has at least two blue edges in $\Gamma$, it follows that $F$ and $F'$ contain a common blue edge $e\subset \Gamma$. It follows easily that $F=F'$.

Consequently, the restriction of $\pi$ to $\Pi(k,\epsilon)\setminus\mathcal{D}$ is injective.
By Lemma \ref{geodesicD}, $\Pi(k,\epsilon)$ lies in $\Sigma(k,\epsilon)\setminus\mathcal{D}$. Therefore
$\pi$ induces an isometric embedding 
$\pi_b:\Pi(k,\epsilon)\to S_\tau(\epsilon)$.
\end{proof}

 \bigskip\noindent
{\it 5.5 The dual functions $L(B^*)$} 

\noindent
Let $\tau\in \mathrm{Tess}_{45}(\mathcal{S},k)$ for some integer $k\geq 3$.

  We are now going to define
the dual function $L(B^*)$, for any   blue curve of $B$ of the tesselation. Choose anf fix one edge $b$ of $B$.  Let $R$, $R'$ be the two red curves containing the endpoints of $b$ and set $D_b=\pi_b(D)$. Set

$$L(B^*)={1\over 2} (L(R)+L(R')-L(D_b)).$$

\noindent

 Informally speaking, $L(B^*)$ is the length function  associated with the ``dual curve" 
 $B^*= 1/2(R+R'-D_b)$. Strictly speaking, the function $L(B^*)$ depends of the choice of the edge $b$.

For any $F, G\in C^\infty(\mathcal{T}_{g})$, let
$\{F,G\}$ be their Poisson bracket induced by the
the Weil-Petersson symplectic form on 
$\mathcal{T}_{g}$, see e.g. \cite{wolpert83}. 
The duality between $B$ and  $B^*$ is demonstrated in the next lemma.

\begin{lemma}\label{dual} Let 
$\tau\in \mathrm{Tess}_{45}(\mathcal{S},k)$ and let
$\sigma_\tau:]0,\pi[\rightarrow \mathcal{T}_{g}$ be the associated Sanki path.

For any $A,\,B\in\mathcal{C}_{blue}$, we have

$$\lim_{\epsilon\to 0} \{L(A), L(B^*)\}
(\sigma_\tau(\epsilon))=
\delta_{A,B},$$

\noindent where $\delta_{A,B}$ is the Kronecker delta.
\end{lemma}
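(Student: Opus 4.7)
The plan is to apply Wolpert's length-pairing formula to each summand in
\[
\{L(A),L(B^{*})\} = \tfrac{1}{2}\bigl(\{L(A),L(R)\} + \{L(A),L(R')\} - \{L(A),L(D_b)\}\bigr),
\]
and then pass to the limit $\epsilon\to 0$ using the combinatorics of $\tau$ and the trigonometric estimates of Section~\ref{Pants}. The curves $A$, $R$, $R'$ and $D_b$ are all simple closed geodesics on $S_\tau(\epsilon)$ (simplicity of $D_b$ following from Lemmas~\ref{geodesicD} and~\ref{isometry}), so Wolpert's formula \cite{wolpert83} applies in the form $\{L(\alpha),L(\beta)\}=\sum_{p\in \alpha\cap\beta}\cos\theta_p$.

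The combinatorial heart of the argument is the enumeration of the three intersection sets. Writing $Q_0=Q$, $Q'_0=Q'$ and $b_0=b$, let $\beta_i$ (resp.\ $\beta'_i$) denote the blue curve of $\tau$ containing the edge $b_i$ (resp.\ $b'_i$) for $0\le i\le k-1$. By the axioms of a $k$-regular tesselation, $R$ has exactly the $k$ vertices $Q_0,\ldots,Q_{k-1}$, the two blue edges at each vertex are colinear, and the blue curve $\beta_i$ therefore meets $R$ transversally once at $Q_i$. As $A$ and $R$ are both unions of edges of $\tau$, they intersect only at vertices, so setting $I_A=\{i:\beta_i=A\}$ and $I'_A=\{i:\beta'_i=A\}$ yields $A\cap R=\{Q_i:i\in I_A\}$ and $A\cap R'=\{Q'_i:i\in I'_A\}$. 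By Lemma~\ref{geodesicD} together with the isometric embedding $\pi_b$ of Lemma~\ref{isometry}, $D_b$ crosses each $b_i$ and $b'_i$ for $i\ge 1$ exactly once but does not meet $b$; since $D_b$ sits in the interior of $\pi_b(\Pi(k,\epsilon))$, these are all its intersections with $A$, and $|A\cap D_b|=|I_A\setminus\{0\}|+|I'_A\setminus\{0\}|$.

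The asymptotic estimates then handle the cosines: at every point of $A\cap R$ or $A\cap R'$ the angle is $\epsilon$ (an inner angle of $H(\epsilon)$, Lemma~\ref{hexagon}), and at every point of $A\cap D_b$ it is some $\omega_i$ or $\omega'_i$, which tends to $0$ as $\epsilon\to 0$ by Lemma~\ref{angle}. All the cosines therefore tend to $1$, and the limit of Wolpert's sum becomes
\[
\lim_{\epsilon\to 0}\{L(A),L(B^{*})\}(\sigma_\tau(\epsilon)) = \tfrac12\bigl(|I_A|+|I'_A|-|I_A\setminus\{0\}|-|I'_A\setminus\{0\}|\bigr) = \tfrac12\bigl(\mathbf{1}_{0\in I_A}+\mathbf{1}_{0\in I'_A}\bigr).
\]
Since $b=b_0$ lies on $B$, the blue curves $\beta_0,\beta'_0$ are both equal to $B$, so each indicator equals $\delta_{A,B}$ and the sum is $\delta_{A,B}$, as required.

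The delicate point I expect is the sign convention in Wolpert's formula: one has to check that every cosine genuinely tends to $+1$, rather than a mixture of $\pm 1$ which could partially cancel. This should follow from the uniform geometry of the tiling (every blue-red crossing is isometric to a single model) and from the monotonicity in Lemma~\ref{pencil}(1), which forces the angles $\omega_i$ to approach $0$ all from the same side, but the sign bookkeeping deserves some care.
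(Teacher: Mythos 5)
Your proof is essentially the same as the paper's: split $L(B^*)$ into the three length functions, apply Wolpert's cosine formula, identify the intersection points of $A$ with $R$, $R'$, $D_b$ via the vertices $Q_i$, $Q'_i$ and the crossing points $\Omega_i$, $\Omega'_i$, and send the angles to $0$ using Lemma~\ref{angle}. The only real difference is bookkeeping: you include the index $i=0$ in your sets $I_A$, $I'_A$ (so the contribution of the edge $b$ itself is absorbed into the indicator $\mathbf 1_{0\in I_A}$), whereas the paper keeps the $2\cos\epsilon$ term for the endpoints of $b$ separate and lets $I, I'$ range over $\{1,\dots,k-1\}$; the two are equivalent. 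Your caveat about the sign convention in Wolpert's formula is a fair one to raise -- the paper asserts the formula with all cosines entering with the same sign without comment -- but it is not a gap specific to your argument, and it is resolved as you suspect by the fact that every blue--red crossing of the tesselation is locally isometric to the same model and every $\omega_i$ lies in $(0,\pi)$ and tends to $0$ from above.
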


\begin{proof} Let $B\in\mathcal{C}_{blue}$. By definition there is an edge $b$ of $B$ such that 
$2 L(B^*)=L(R)+L(R')-L(D_b)$ where $R$ and $R'$ are the two red curves containing the endpoints of $b$. \\

Set $\overline{\Pi}=\pi_b({\Pi}(k,\epsilon))$ and $\overline{D}=\pi_b(D_b)$. For each $i\in\{1,2,\dots,k-1\}$, set

\centerline{$\beta_i=c_i\cap \overline{\Pi}(k,\epsilon)$ and $\beta_i'=c_i'\cap \overline{\Pi}(k,\epsilon)$.}

\noindent By Lemma \ref{isometry}, $\beta_i$ is an arc, with one endpoint $P_i$ in $R$ and the other endpoint $\Omega_i$ on $\overline{D}$. Similarly, $\beta_i'$ is an arc, with one endpoint $Q_i'$ in $R'$ and the other endpoint, say $\Omega_i'$, belongs to $\overline{D}$. We have 

\begin{enumerate}
\item $\beta_i$ does not intersect $R'$,

\item $\beta_i\cap R=Q_i$ and
$\angle R \beta_i=\epsilon$

\item $\beta_i\cap \overline{D}=\Omega_i$ 
$\angle \overline{D} \beta_i=\omega_i$,
\end{enumerate}
\noindent where the angles $\omega_i$ are defined in Section 4.4. Similarly, we have
\begin{enumerate}
\item $\beta_i'$ does not intersect $R$,

\item $\beta_i'\cap R'=P_i'$ and
$\angle R \beta_i=\epsilon$

\item $\beta_i'\cap \overline{D}=\Omega_i$ 
$\angle \overline{D} \beta_i'=\omega_i'$.
\end{enumerate}

Let $A\in\mathcal{C}_{blue}$ be another blue curve.
Set 

\centerline{$I=\{i \mid 1\leq i\leq k-1 \,
\mathrm{and} \,  \beta_i\subset A\}$, and}

\centerline{$I'=\{i\mid 
1\leq i\leq k-1 \,\,
\mathrm{and} \beta_i'\subset A\}$.}

When $A\neq B$, the curve $A$ meets 
$R\cup R'\cup D_b$ exactly at the points
$P_i, \Omega_i$ for $i\in I$ and 
$P_i', \Omega_i'$ for $i\in I$.
Therefore by Wolpert's formula \cite{W}, we have

\hskip1cm $\{L(A), L(R)+L(R')-L(D_b)\}(\sigma_\tau(\epsilon))$

\hskip1cm$=[\sum_{i\in I} \cos\epsilon-\cos\omega_i]
+ [\sum_{i\in I'} \cos\epsilon-\cos\omega_i']$.

\noindent By Lemma \ref{angle}, we have

$$\lim_{\epsilon\to0}\omega_i=0,$$ 

\noindent and therefore 

$$\lim_{\epsilon\to 0} \{L(A), L(B^*)\}
(\sigma_\tau(\epsilon))=0$$

When $A=B$, the computation is similar except
that, in addition to the arcs $\beta_i$ for $i\in I$ and $\beta'_i$ for $i\in I'$,  the geodesic $A$ contains $b$. Therefore, one obtains

\hskip1cm $\{L(A), L(R)+L(R')-L(D_b)\}
(\sigma_\tau(\epsilon))$ 

\hskip1cm$=2\cos \epsilon+
[\sum_{i\in I} \cos\epsilon-\cos\omega_i]
+ [\sum_{i\in I} \cos\epsilon-\cos\omega_i']$,

\noindent and therefore 

$$\lim_{\epsilon\to 0} \{L(A), L(A^*)\}
(\sigma_\tau(\epsilon))=1.$$
\end{proof}

\bigskip\noindent
{\it 5.6 The duality theorem}

\noindent
Suppose $k\geq 3$ and choose  $\tau\in \mathrm{Tess}(\mathcal{S},k)$. Recall that 
 $\sigma_\tau:]0,\pi[\to \mathcal{T}_{g}$ is the Sanki path.

\begin{thm}\label{sanki} Assume that $\tau$ satisfies the axioms 
(AX4) and AX(5). Then for any 
$\epsilon\in]0,\pi[$ outside some finite set $F$, the set 

$$\{\d L(C)\mid C\in \Curv(\tau)\}$$

\noindent is linearly independent at the point 
$\sigma_\tau(\epsilon)$.
\end{thm}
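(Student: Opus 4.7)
The plan is to exhibit a square matrix of Poisson brackets on $\Curv(\tau) \times \Curv(\tau)$ whose determinant, evaluated at $\sigma_\tau(\epsilon)$, is a nontrivial analytic function of $\epsilon \in ]0,\pi[$; its discrete zero locus then defines the finite exceptional set $F$.

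For each $D \in \Curv(\tau)$, I attach a dual function $L(D^*)$: for a blue $D$, take the definition from Section~5.5; for a red $D$, use the analogous construction obtained by systematically interchanging the roles of red and blue throughout Sections~3--5, so that $L(D^*) = \tfrac{1}{2}(L(B) + L(B') - L(D_d))$ where $B, B'$ are the blue curves at the endpoints of a chosen red edge of $D$. Form the square matrix $M(\epsilon) = (\{L(C), L(D^*)\}(\sigma_\tau(\epsilon)))_{C,D \in \Curv(\tau)}$. Lemma~\ref{dual} gives that the blue--blue block of $M$ converges to the identity as $\epsilon \to 0$, and applying the same lemma to the color-swapped tesselation $\tau'$ (using $\sigma_{\tau'}(\epsilon) = \sigma_\tau(\pi-\epsilon)$) shows that the red--red block converges to the identity as $\epsilon \to \pi$.

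The heart of the argument is the identical vanishing of the two cross-blocks. Since distinct same-color curves of $\tau$ are disjoint (at every vertex the two edges of one color are collinear, hence on a single curve), every Poisson bracket $\{L(B), L(B')\}$ of blue curves, and every $\{L(R), L(R')\}$ of red curves, vanishes. Consequently $\{L(B), L(R^*)\}$ collapses to $-\tfrac{1}{2}\{L(B), L(D_r)\}$, and symmetrically $\{L(R), L(B^*)\} = -\tfrac{1}{2}\{L(R), L(D_b)\}$. The color-swapped analog of Lemma~\ref{geodesicD} then gives that $D_r$ meets only red edges of $\tau$ (the ``inner'' edges of the color-swapped construction), so a blue curve $B$, whose geodesic path lies entirely on the blue $1$-skeleton, has $B \cap D_r = \emptyset$ and hence $\{L(B), L(D_r)\} = 0$; the symmetric statement handles $\{L(R), L(D_b)\} = 0$. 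Thus $M$ is block-diagonal.

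Therefore $\det M(\sigma_\tau(\epsilon)) = \det M_{BB}(\sigma_\tau(\epsilon)) \cdot \det M_{RR}(\sigma_\tau(\epsilon))$ is a product of two analytic functions on $]0,\pi[$; each factor is nonvanishing near one of the endpoints, hence is a nontrivial analytic function, and so $\det M$ is nontrivial analytic with discrete zero set $F$, finite on every compact subinterval of $]0,\pi[$. For $\epsilon \notin F$, invertibility of $M(\sigma_\tau(\epsilon))$ yields the desired linear independence: a relation $\sum_C c_C\, \d L(C) = 0$ at $\sigma_\tau(\epsilon)$, paired with the Hamiltonian vector field $X_{L(D^*)}$, gives $\sum_C c_C \{L(C), L(D^*)\} = 0$ for every $D$, i.e., $M \cdot (c_C) = 0$, forcing $c_C = 0$. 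The main obstacle is carrying out rigorously the color-swapped pair-of-pants construction and its analog of Lemma~\ref{geodesicD}: this either requires that axioms (AX4)--(AX5) be self-dual, or needs explicit introduction of their symmetric counterparts, so that the isometric embedding $\pi_r$ is well-defined and $D_r$ provably does not meet any blue edge of $\tau$.
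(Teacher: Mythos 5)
Your proposal takes a genuinely different route from the paper's. The paper only forms the blue$\times$blue matrix $\bigl(\{L(A),L(B^*)\}\bigr)_{A,B\in\mathcal{C}_{blue}}$. Given a relation $\sum_{C\in\Curv(\tau)} a_C\,\d L(C)=0$, it pairs with the Hamiltonians of the blue duals $L(B^*)$; the red contributions drop out because neither $D_b$ nor the red boundary curves meet any red curve transversally, so invertibility of the blue block alone forces $a_A=0$ for blue $A$. The residual relation among red curves is then killed by a completely separate and elementary observation: the red curves are pairwise disjoint, hence extend to a pants decomposition, so $\{\d L(A)\mid A\in\mathcal{C}_{red}\}$ is linearly independent as a subset of Fenchel--Nielsen coordinates at every point of $\mathcal{T}_g$. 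You instead symmetrize the construction, building a $\Curv(\tau)\times\Curv(\tau)$ matrix with a block-diagonal structure from duals of both colours. The symmetry is appealing, but it introduces two problems that the paper's asymmetric argument simply never meets.

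First, as you flag, the red duals $L(R^*)$ need the colour-swapped pair-of-pants construction, hence colour-swapped versions of (AX4) and (AX5). These are not hypotheses of the theorem and are not formal consequences of (AX4)--(AX5): a blue edge backtracking to a single red curve and a red edge backtracking to a single blue curve are logically independent combinatorial events. (In the intended application the swapped axioms do hold, since Lemma~\ref{axioms45} uses only the systole condition, which is colour-blind; but the theorem as stated is more general.) The paper never needs $\pi_r$ or $D_r$ and so avoids this entirely.

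Second, your finiteness argument for $F$ is incomplete. You establish $\det M_{BB}\to 1$ as $\epsilon\to 0$ and $\det M_{RR}\to 1$ as $\epsilon\to\pi$, so each block determinant is a nonzero analytic function on $]0,\pi[$; but a nonzero analytic function on an open interval can have zeros accumulating at an endpoint, and you yourself only conclude finiteness of $F$ on compact subintervals, whereas the statement asserts $F$ is finite. The paper closes this by the orientation-reversal symmetry, which shows $\det M_{BB}$ also tends to $\pm1$ at $\epsilon\to\pi$; in your setup you would need to apply that same observation to each of the two blocks so that the product has nonzero limits at both endpoints. (For the use made of the theorem in Corollary~\ref{codim} local finiteness near $\pi/2$ is enough, but that is weaker than what is claimed.)
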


\begin{proof} For $\epsilon\in ]0,\pi[$, let $\delta(\epsilon)$ be the determinant of the square matrix

$$(\{L(A), L(B^*)\}(\sigma_\tau(\epsilon)))_{A,B\in\mathcal{C}_{blue}},$$

\noindent and set 
$F=\{\epsilon\in]0,\pi[\,\mid \delta(\epsilon)=0\}$.

By Lemma \ref{dual}, we have 
$\lim_{\epsilon\to0}\,\delta(\epsilon)=1$. Moreover,
changing the orientation of $\mathcal{S}$ amounts to replacing $\epsilon$ by $\overline\epsilon$, so we also have
$\lim_{\epsilon\to\pi}\,\delta(\epsilon)=\pm 1$.
Since $\delta$ is an analytic function on $]0,\pi[$, it follows that $F$ is finite.

It remains to show that, for $\epsilon\notin F$, the differentials at $\sigma_\tau(\epsilon)$ of the set of length functions 
$\{L(C)\mid C\in \Curv(\tau)\}$ are linearly independent.

Let $\epsilon \notin F$.  Let 
$(a_A)_{A\in\Curv(\tau)}$ be an element of $\R^{|\Curv(\tau)|}$
such that 

$$\sum_{A\in\Curv(\tau)}\, a_A dL(A)\vert_{\sigma_\tau(\epsilon)}=0.$$

Let $B\in \mathcal{C}_{blue}$. Recall that $B^*$ is a linear combination of two red curves $R, R'$ and
a certain geodesic $D_b$. Neither the geodesics 
$D_b$ nor the red curves 
meet  any red curve transversally. Hence we have 
$\{L(C),L(B^*)\}=0$ for any red curve $C$. It follows that

$$\sum_{A\in \mathcal{C}_{blue}}
\{ a_A L(A), L(B^*)\}$$

\noindent is zero at $\sigma_\tau(\epsilon)$. Since
$\delta(\epsilon)\neq 0$, we have
$a_A=0$ for any $A\in\mathcal{C}_{blue}$.

Therefore, it follows that
$$\sum_{A\in \mathcal{C}_{red}}\, a_A dL(A)\vert_{\sigma_\tau(\epsilon)}=0.$$  

\noindent  Since it is a subset  of some Fenchel-Nielsen coordinates, the set of differentials $\{\d L(A)\mid A\in\mathcal{C}_{red}\}$ is linearly independent. Therefore we also have 
$a_A=0$ for any $A\in\mathcal{C}_{red}$.

Hence the differentials 
$(\d L(A))_
{A\in\Curv(\tau)}$
are linearly independent at $\sigma_\tau(\epsilon)$. 
\end{proof}


\section{Local structure of ${\mathcal P}_g$ along a Sanki's path}
\label{sub:Schmutz}

\noindent
Our previous work \cite{IM1} provides a systematic construction of $k$-regular tesselations $\tau$. 
To apply Theorem \ref{sanki}, we first show
that the axioms $(AX4)$ and $(AX5)$ are automatically satisfied when 
the curves of $S_\tau(\pi/2)$ are  the
systoles.  

Then we deduce the local structure of the Thurston's spine along a Sanki's path.

To prove the main result, we will apply 
Theorem \ref{sanki} to the tesselations
$\tau_g$ of the surface $S_g$ obtained in \cite{IM1}.
Obviously they satisfy the axioms $(AX1-3)$ and it remains to prove that the tesselations $\tau_g$ also satisfy $(AX4)$ and $(AX5)$.

\bigskip\noindent
{\it 6.1 Verification of the axioms $(AX4)$ and $(AX5)$ }

\noindent
As usual, suppose $\mathcal{S}$ is a surface of genus $g\geq 2$, $k\geq 3$ and $\tau\in\Tess(\mathcal{S},k)$.

\begin{lemma}\label{axioms45}
Assume that the set of systoles of 
$S_\tau(\pi/2)$ is the set of curves of $\tau$.
 Then the tesselation $\tau$ satisfies the axioms (AX4) and (AX5).
\end{lemma}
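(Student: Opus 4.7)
The plan is to argue both axioms by contradiction, using the hypothesis in the form: every essential closed curve on $S_\tau(\pi/2)$ has length $\ge kL$, with equality exactly on the curves of $\tau$. So if I can construct an essential loop of length strictly below $kL$, I am done.

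For (AX4), suppose a blue edge $b$ has both endpoints on a red curve $R$, splitting $R$ into two arcs, the shorter of which I call $\alpha$, with $l(\alpha)\le kL/2$. The piecewise geodesic loop $\gamma := b\cup\alpha$ has length at most $L+kL/2<kL$ for $k\ge 3$. Essentiality comes from a universal-cover argument: if $\gamma$ were contractible, some lift $\tilde b$ of $b$ would have both endpoints on a single lift $\tilde R$ of $R$; two distinct geodesic lines in $\HP$ meet in at most one point, so $\tilde b\subset\tilde R$, hence $b\subset R$, contradicting the disjointness of the red and blue 1-skeleta. Thus $\gamma$ is essential of length below $kL$, contradicting the systole hypothesis.

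For (AX5), suppose distinct blue edges $b\ne b'$ and distinct red curves $R\ne R'$ witness a violation. By the (AX4) case just proved, each of $b,b'$ has its two endpoints on two distinct red curves, so being adjacent to both $R$ and $R'$ forces these to be precisely $R$ and $R'$; write $v,w$ for the endpoints of $b$ on $R,R'$ and $v',w'$ for those of $b'$. Form the piecewise geodesic loop $\gamma:=b\cdot\beta\cdot(b')^{-1}\cdot\alpha^{-1}$, where $\alpha\subset R$ joins $v$ to $v'$ and $\beta\subset R'$ joins $w$ to $w'$. At $\epsilon=\pi/2$ all four corners of $\gamma$ are right angles, so essentiality follows by lifting: a contractible lift would exhibit $\tilde b$ and $\tilde b'$ as common perpendiculars of the same pair of lifts $\tilde R,\tilde R'$, but the common perpendicular of two disjoint lines in $\HP$ is unique, forcing $\tilde b=\tilde b'$ and hence $b=b'$, a contradiction.

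It remains to show $l(\gamma^*)<kL$ for the geodesic representative of $\gamma$. The main lever is Lemma \ref{saccheri}: the Saccheri-quadrilateral formula $\sinh(d/2)=\cosh L\cdot\sinh(l(\beta)/2)$ gives the diagonal across three consecutive sides of the lifted $4$-gon, while Gauss--Bonnet forbids any hyperbolic $4$-gon with four right angles (its area would vanish), so the lifted $4$-gon in $\HP$ fails to close up; the resulting defect equals the translation length of the deck transformation $g$ associated with $\gamma$, which bounds $l(\gamma^*)$. The hard part will be making this strict bound uniform in the range $l(\alpha),l(\beta)\in L\cdot\{1,\ldots,\lfloor k/2\rfloor\}$, because the naive piecewise bound $l(\gamma^*)\le 2L+l(\alpha)+l(\beta)$ reaches $(k+2)L$ at the extreme. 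The small-arc regime $l(\alpha)=l(\beta)=L$ is handled by a direct arccosh computation in the three-line configuration $\tilde R,\tilde R',g\tilde R$ of perpendicularly-related lifts; the long-arc regime must be closed by iteratively exploiting the Saccheri shortcut on each right-angle corner, or equivalently by computing the translation length of $g$ directly from the trigonometry of the pair $(\tilde R,g\tilde R)$.
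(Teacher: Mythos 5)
Your treatment of (AX4) is correct, though more laborious than necessary: the paper simply invokes the classical lemma that two distinct systoles intersect in at most one point. A blue edge $b$ lies on a blue curve $B$; if both endpoints of $b$ were on the red curve $R$, then the systoles $B$ and $R$ would cross at least twice. Your surgery $b\cup\alpha$ essentially re-derives that lemma in this special case, and it works, but it is more than is needed.

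For (AX5) there is a genuine gap, and you flag it yourself: the estimate $l(\gamma^*)<kL$ is precisely ``the hard part'' and your proposal leaves it unresolved. Neither suggested route is carried out, and one of them is in fact insufficient. Shortcutting each right-angled corner by a hypotenuse is not even well defined (adjacent corners share an edge), and even if one pretends it is, the per-corner saving is $2L-\arcosh(\cosh^2 L)=\arcosh 7-\arcosh 4\approx 0.571$, so four corners save only about $2.28$, less than the needed $2L\approx 2.63$. The paper's resolution is sharper and different: one chooses the arcs so that $l(\gamma_1)+l(\gamma_1')\le kL$ (WLOG), then factors the loop so that it contains two triples of \emph{three consecutive edges} $g_1\ast c\ast g_1'$ and $f_1'\ast d\ast f_1$, each forming the feet-and-base of a Saccheri quadrilateral inside a single hexagon; Lemma~\ref{saccheri} replaces each such triple of total length $3L$ by the summit of length $L'<2L$, saving strictly more than $L$ per triple. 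Two triples save more than $2L$, giving $l(\tilde\gamma)\le l(\gamma_1)+l(\gamma_1')+2L'-4L<kL$. (One must also verify $F_1\neq G_1$ so that the two triples are disjoint, which your sketch does not address.) Your identification of Lemma~\ref{saccheri} as the key lever is on the right track, but the specific application --- shortcutting a red--blue--red triple inside one hexagon, not a single corner --- is the missing idea, and the proof is not complete without it.
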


\begin{proof} By a well-known lemma of riemannian geometry, two distinct systoles intersect in at most one point, therefore (AX4) is satisfied.\\

The proof of Axiom (AX5) is more delicate. Let $C,C'\in\mathcal{C}$ and let $c,d$ be two distinct blue edges connecting $C$ and $C'$ on the same side. Let $(P,P')\in C\times C'$ be the endpoints of $c$, and let $(Q,Q')\in C\times C'$ be the endpoints of $d$, as shown in Figure \ref{deadbug}.\\

Since $c$ and $d$ are adjacent to $C$ on the same side, there is a planar representation of $C$ and $C'$ where 
$c$ and $d$ are on the exterior of $C$. This planar representation provides an orientation of $C$ and $C'$, called the {\it direct} orientation. \\

Let $F_1$ and $F_2$ be the two hexagons containing 
$d$. Set $f_1=F_1\cap C$, $f_2=F_1\cap C$, 
$f_1'=F_1\cap C$, $f_2'=F_1\cap C$. 
By definition, $f'_1, d$ and $f_1$ are consecutive edges of $F_1$. We can assume $f'_1, d$ and $f_1$ are
ordered relative to the direct orientation. Consequently $f_2$ follows $f_1$ relative to the orientation of $C$. Since $\mathcal{S}$ is oriented,  $f'_1$ follows $f'_2$ in the direct orientation of $C'$. Therefore the relative position of $F_1$ and $F_2$ along $C$ and $C'$ is as in Figure \ref{deadbug}.\\

\begin{figure}[ht!]
\centering
\includegraphics[width=120mm]{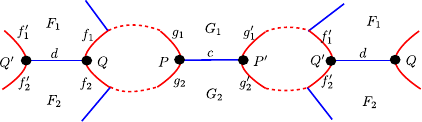}
\caption{Respective positions of $F_1$ and $F_2$.
They appear on the left side and the right sides of the figure: it should be understood that they lie
on a cylinder. }
\label{deadbug}
\end{figure}

For $i=1$ or $2$, let  $\gamma_i$ be the arc of $C$ 
from  $Q$ to $P$ and containing $f_i$. Similarly
let $\gamma_i'$ be the arc of $C'$ 
from  $P'$ to $Q$ and containing $f_i'$. Let us orient $c$ from $P$ to $P'$ and $d$ from $Q'$ to $Q$.\\

The arc $\gamma_1$, $\gamma_2$, $\gamma_1'$ and  
$\gamma_2'$ cover $C\cup C'$, therefore we have 

\centerline{$L(\gamma_1)+L(\gamma_1')
+L(\gamma_2)+L(\gamma_2')=2kL$.}

\noindent It is possible to assume without loss of generality that

\centerline{$L(\gamma_1)+L(\gamma_1')\leq kL$.}

The path $\gamma_1$ consists of 
$L(\gamma_1)/L$ edges. Let $g_1$ be the last edge
of $\gamma_1$.  Similarly, let $g_1'$ be the first edge of $\gamma_1'$. By definition, 
$g_1$ contains $P$ and $g_1'$ contains $P'$.
Let $G_1$ be the hexagon with the three consecutive edges $g_1$, $c$ and $g_1'$.\\

Now $f_1\neq g_1$ and $f_1'\neq g_1'$, otherwise the edge $d$ would join $g_1$ and $g'_1$. Hence we have $F_1\neq G_1$. Therefore there is a factorization $\gamma_1=f_1*\delta_1*g_1$, where $\delta_1$ is the geodesic arc between $f_1$ and $g_1$ and where the notation $*$ stands for the concatenation of paths. Similarly, there is a factorization $\gamma_1'=f_1*\delta_1'*g_1$. \\

We show now that the loop

\centerline{$\gamma:=\gamma_1*c*\gamma_1'*d$,}

\noindent 
is not null-homotopic. Assume otherwise. Set $S_{g}=S_\tau(\pi/2)$, let $\pi:\HP\to S_{g}$ be the universal cover of $S_{g}$ and let $\tilde{\gamma}$ be a lift of $\gamma$ in $\HP$. Since $\gamma$ is composed of four geodesic arcs, and the angles between them are $\pi/2$, the lift $\tilde{\gamma}$ would bound a quadrilateral whose inner angles are all $\pi/2$ or $3\pi/2$  which is impossible. It follows that $\gamma$ is not null-homotopic.\\

The hexagon $F_1$ contains a Saccheri quadrilateral whose basis is $d$ and with feet given by $f_1$ and $f'_1$. Let $d'$ be the fourth side, oriented from $f'_1$ to $f_1$. As a path, $d'$ is homotopic to $f'_1*d*f_1$.\\

Similarly, let $c'$ be the last side, oriented from $g_1$ to $g'_1$, of the Saccheri quadrilateral in $G_1$ whose basis is $c$ and whose feet are $g_1$ and $g'_1$. Similarly, $c'$ is homotopic to $g_1*d*g'_1$.\\

Up to a reparametrization, we have

\centerline{$\gamma=\delta_1*g_1*c*g'_1*
\delta_1'*f_1'*d*f_1$.}

\noindent Hence $\gamma$ is homotopic to
$\tilde{\gamma}=\delta_1*c'*\delta_1'*d'$.
Since we have $L(c')=L(d')=L'$, we have 

\centerline{$L(\tilde{\gamma})=L(\gamma_1)+L(\gamma_1')+2L'-4L<kL$}

\noindent by Lemma \ref{saccheri}, which contradicts that $\mathcal{C}$ is the set of systoles.
\end{proof}

\bigskip\noindent
{\it 6.2 Two corollaries}

\noindent
We will now derive two corollaries concerning the structure of ${\mathcal P}_g$ at the neighborhood of a Sanki's path.

Given a finite set of curves 
${\mathcal C}=\{C_1, C_2,\dots,C_n\}$,  let 
$E({\mathcal C})$ be the set of 
$x\in {\mathcal T}_g$ such that

\centerline{$L(C_1)(x)=L(C_2)(x)=\dots=L(C_n)(x)$.}

\noindent Also let $\Sys({\mathcal C})$ be the set of points
$x\in{\mathcal P}_g$ such that ${\mathcal C}$ is the set of systoles at $x$.

Let $X\subset {\mathcal P}_g$ be a locally closed subset, and let $x\in \overline{X}$. We say that
$X$ is {\it locally a smooth manifold at $x$} if
$U\cap X$ is a smooth manifold for some open neighborhod of $X$. When it is the case,
the local codimension $\codim_x\,X$ is well defined.
Our previous definition do not require that $x$ belongs to $X$.

\begin{cor}\label{codim} Let 
$\tau\in\Tess(\mathcal{S},k)$ for some $k\geq 3$
such that $\Curv(\tau)$ is the set of
systoles of $S_\tau(\pi/2)$.

Let $\mathcal{C}\subset \Curv(\tau)$  be any filling subset. Then for any $\epsilon\neq \pi/2$ closed to $\pi/2$, we have

(i) $\sigma_\tau(\epsilon)$ belongs to
$\overline{\Sys(\mathcal{C})}$,

(ii) $\Sys(\mathcal{C})$ is a smooth manifold in the neighborhood of $\sigma_\tau(\epsilon)$, and

(iii) $\codim_{\sigma_\tau(\epsilon)}\,
\Sys(\mathcal{C})=\Card \mathcal{C}-1$.
\end{cor}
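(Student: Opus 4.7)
The plan is to combine Lemma \ref{axioms45}, Theorem \ref{sanki}, and a Mumford compactness argument to identify $\Sys(\mathcal{C})$ locally with an open subset of the smooth level set $E(\mathcal{C})$. Since $\Curv(\tau)$ is assumed to be the systole set of $S_\tau(\pi/2)$, Lemma \ref{axioms45} implies that $\tau$ satisfies axioms (AX4) and (AX5), so Theorem \ref{sanki} provides a finite set $F \subset\, ]0,\pi[$ off which the family $\{\d L(C)\mid C \in \Curv(\tau)\}$ is linearly independent at $\sigma_\tau(\epsilon)$. Because $F$ is finite, for $\epsilon$ close enough to $\pi/2$ and distinct from it, we have $\epsilon \notin F$; I fix such an $\epsilon$.

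Write $\mathcal{C}=\{C_1,\dots,C_n\}$. The subfamily $\{\d L(C_1),\dots,\d L(C_n)\}$ is then linearly independent at $\sigma_\tau(\epsilon)$, so the $n-1$ equations $L(C_i)=L(C_1)$ for $2\le i\le n$ cut out, by the implicit function theorem, a smooth submanifold $E(\mathcal{C})$ of codimension $n-1$ in some open neighborhood $V$ of $\sigma_\tau(\epsilon)$. Note that $\sigma_\tau(\epsilon)$ itself lies in $E(\mathcal{C})$ because the Sanki construction forces every curve in $\Curv(\tau)$ to have the same length $L(\epsilon)$. It will remain to show that, in a possibly smaller neighborhood, $\Sys(\mathcal{C})$ coincides with the open subset of $E(\mathcal{C})$ defined by the strict inequalities $L(D) > L(C_1)$ for $D \in \Curv(\tau) \setminus \mathcal{C}$, and that $\sigma_\tau(\epsilon)$ lies in the closure of that open subset.

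The main obstacle is ruling out essential curves outside $\Curv(\tau)$ as potential systoles nearby. At $\sigma_\tau(\pi/2)$ the systole length equals $\arcosh 2$ and the systoles are exactly $\Curv(\tau)$. Since a hyperbolic surface has only finitely many essential curves of any bounded length and each such length function is continuous on $\mathcal{T}_g$, Mumford compactness furnishes an $\eta>0$ and a neighborhood $W$ of $\sigma_\tau(\pi/2)$ on which every curve in $\Curv(\tau)$ has length less than $\arcosh 2 + \eta$ while every other essential curve has length greater than $\arcosh 2 + \eta$. Thus on $W$ the systoles of any surface automatically lie within $\Curv(\tau)$.

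Taking $\epsilon$ close enough to $\pi/2$ that $\sigma_\tau(\epsilon) \in W$, the map $\Phi := (L(C))_{C \in \Curv(\tau)}$ is a submersion at $\sigma_\tau(\epsilon)$, so any nearby target vector is attained by some point $y$ close to $\sigma_\tau(\epsilon)$ and still in $W$. Prescribing $L(C)(y)=\ell'$ for $C \in \mathcal{C}$ and $L(D)(y)>\ell'$ for $D \in \Curv(\tau)\setminus\mathcal{C}$, with $\ell' \to L(\epsilon)$, produces a sequence in $\Sys(\mathcal{C})$ converging to $\sigma_\tau(\epsilon)$, proving (i). The same dichotomy identifies $\Sys(\mathcal{C})$ on $V\cap W$ with the open subset of $E(\mathcal{C})$ defined by the strict inequalities above, which is therefore a smooth submanifold of codimension $n-1$, yielding (ii) and (iii).
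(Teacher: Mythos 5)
Your proof is correct and follows essentially the same route as the paper: invoke Lemma \ref{axioms45} to verify (AX4)--(AX5), apply Theorem \ref{sanki} to get that the differentials $\{\d L(C)\mid C\in\Curv(\tau)\}$ are independent at $\sigma_\tau(\epsilon)$ off a finite set, conclude via the submersion theorem that $E(\mathcal{C})$ is smooth of codimension $\Card\mathcal{C}-1$ near $\sigma_\tau(\epsilon)$, and identify $\Sys(\mathcal{C})$ locally with the open subset of $E(\mathcal{C})$ cut out by the strict inequalities $L(C)<L(C')$ for $C\in\mathcal{C}$, $C'\in\Curv(\tau)\setminus\mathcal{C}$. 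Where the paper simply cites \cite{SS} and \cite{T85} for the statement that $\Sys(\mathcal{C})$ is open in $E(\mathcal{C})$, you supply the underlying compactness/finiteness argument (there are only finitely many essential curves of length below any fixed bound, so a neighborhood of $\sigma_\tau(\pi/2)$ sees no competing systoles outside $\Curv(\tau)$); this is a correct and standard filling-in of that cited step. Two small slips worth fixing: the systole length at $\sigma_\tau(\pi/2)$ is $k\arcosh 2$ rather than $\arcosh 2$, since each curve of the tesselation consists of $k$ edges of length $\arcosh 2$; and for (i) it is cleaner to say you use the local surjectivity from the submersion theorem to produce, for each prescribed length vector near $(L(\epsilon),\dots,L(\epsilon))$ with the entries over $\mathcal{C}$ equal and strictly smaller than the rest, a point in $W$ converging to $\sigma_\tau(\epsilon)$, rather than phrasing it as "any nearby target vector is attained."
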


\begin{proof}  
By Lemma \ref{axioms45}, the tesselation $\tau$ belongs to
$\Tess_{45}(\mathcal{S},k)$. Hence by Theorem \ref{sanki}, the map 

\centerline{$L(\mathcal{C}):
\mathcal{T}(\mathcal{S})\to \R^\mathcal{C},
x\mapsto (L(C)(x))_{C\in\Curv(\tau)}$}

\noindent is a submersion at the point 
$\sigma_\tau(\epsilon)$
for all $\epsilon\neq\pi/2$ closed to $\pi/2$.
By the submersion theorem,  $E(\mathcal{C})$ is smooth
of codimension $\Card\,\mathcal{C}-1$ around 
the point $\sigma_\tau(\epsilon)$
and $\sigma_\tau(\epsilon)$ is adherent to
the set
$E^+({\mathcal C})$ of all $x\in E({\mathcal C})$
defined by the inequations 

\centerline{
$ L(C)(x)<L(C')(x)$,  for all  $C\in{\mathcal C}$ and $C'\in \Curv(\tau)\setminus {\mathcal C}$.}

 Thus Assertion (ii) and (iii) follows from the fact that $\Sys(\mathcal{C})$ is an open set of $E(\mathcal{C})$, see \cite{SS}\cite{T85}.
\end{proof}

\begin{cor}\label{cor2}
Under the hypothesis of Corollary \ref{codim}, the point $\sigma_\tau(\pi/2)$ is adherent to
$\Sys(\mathcal{C})$ and we have

\centerline{$\codim\, \Sys(\mathcal{C})<\Card(\mathcal{C})$.}
\end{cor}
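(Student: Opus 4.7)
The plan is to obtain both conclusions as direct consequences of Corollary~\ref{codim}, by a continuity argument combined with the observation that points arbitrarily close to $\sigma_\tau(\pi/2)$ on the Sanki path already realize the desired codimension bound in a clean manifold form. Because Corollary~\ref{codim} already provides, for all $\epsilon \ne \pi/2$ close to $\pi/2$, the adherence of $\sigma_\tau(\epsilon)$ to $\Sys(\mathcal{C})$ together with the exact local codimension, no new computation is needed; the work is purely topological.

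First I would establish the adherence of $\sigma_\tau(\pi/2)$ to $\Sys(\mathcal{C})$. By Corollary~\ref{codim}(i), there is a punctured neighbourhood $U \setminus \{\pi/2\} \subset \,]0,\pi[\,$ of $\pi/2$ such that $\sigma_\tau(\epsilon) \in \overline{\Sys(\mathcal{C})}$ for every $\epsilon$ in that punctured neighbourhood. Since the Sanki path $\sigma_\tau : \,]0,\pi[\, \to \mathcal{T}_g$ is analytic, in particular continuous, we obtain
\[
\sigma_\tau(\pi/2) \;=\; \lim_{\epsilon \to \pi/2} \sigma_\tau(\epsilon) \;\in\; \overline{\,\overline{\Sys(\mathcal{C})}\,} \;=\; \overline{\Sys(\mathcal{C})},
\]
which is exactly the first assertion.

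Next I would derive the codimension bound. Fix any $\epsilon \ne \pi/2$ sufficiently close to $\pi/2$ so that Corollary~\ref{codim}(ii)–(iii) apply. Then $\Sys(\mathcal{C})$ is a smooth submanifold of $\mathcal{T}_g$ in an open neighbourhood $V$ of $\sigma_\tau(\epsilon)$, of local codimension $\Card\,\mathcal{C} - 1$. Because $\sigma_\tau(\epsilon)$ is adherent to $\Sys(\mathcal{C})$, the intersection $V \cap \Sys(\mathcal{C})$ is non-empty, so we can pick an actual point $x \in \Sys(\mathcal{C})$ inside $V$. At that point $\codim_x \Sys(\mathcal{C}) = \Card\,\mathcal{C} - 1$. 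Since by the convention of Subsection~1.3 the global codimension is the minimum (equivalently, $\dim$ is the maximum of local dimensions), we conclude
\[
\codim \Sys(\mathcal{C}) \;\le\; \codim_x \Sys(\mathcal{C}) \;=\; \Card\,\mathcal{C} - 1 \;<\; \Card\,\mathcal{C},
\]
which is the desired inequality.

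There is essentially no obstacle here: both statements are formal consequences of the stronger Corollary~\ref{codim}, together with continuity of $\sigma_\tau$ and the definition of codimension for semi-analytic sets. The only subtlety worth flagging is that one must remember that $\sigma_\tau(\epsilon)$ itself need not lie in $\Sys(\mathcal{C})$ (only in its closure), which is why one extracts a nearby point $x \in \Sys(\mathcal{C})$ from the smooth neighbourhood $V$ before reading off the local codimension.
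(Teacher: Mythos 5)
Your proof is correct and is essentially the argument the paper has in mind; the paper states Corollary~\ref{cor2} without a separate proof precisely because it follows formally from Corollary~\ref{codim} in the way you describe (continuity of the analytic path $\sigma_\tau$ for the adherence, and picking an actual point of $\Sys(\mathcal{C})$ in the smooth neighbourhood for the codimension bound). You have also correctly identified the one subtlety worth noting, namely that $\sigma_\tau(\epsilon)$ need only be adherent to $\Sys(\mathcal{C})$, and your workaround via a nearby $x\in\Sys(\mathcal{C})$ in $V$ is exactly what is needed given the paper's convention that $\dim$ is the maximum of local dimensions over points of the set itself.
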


\bigskip\noindent
{\it 6.3 The main result from \cite{IM1}}

\noindent
A {\it decoration} of the hexagon $H(\pi/2)$
is a cyclic  indexing of it six sides by 
$\Z/6\Z$. Up to direct isometries, there are exactly
two decorated hexagons, say ${\mathcal H}$ and
$\overline{\mathcal H}$. 

Let $S$ be a closed hyperbolic surface. A {\it standard} hexagonal tesselation $\tau$ of $S$ is a
tesselation of $S$, where each tile is isomorphic to 
${\mathcal H}$ or $\overline{\mathcal H}$. 
Of course, it is assumed that tiles are glued along
edges of the same index. 
 
\begin{citethm}[Theorem 25 of \cite{IM1}]
There exists an infinite set $A$ of integers $g\geq 2$, and, for any $g\in A$, a closed oriented hyperbolic surface $S_g$ of genus $g$ endowed with a standard  tesselation $\tau_g$ satisfying the following assertions

\begin{itemize}
\item[(1)] the  systoles of $S_g$ are the curves of 
$\tau_g$, and
\item[(2)] we have 
$$\Card\,\Syst(S_g)\leq {57\over \sqrt{\ln\ln\ln g}}\,\,
{g\over \sqrt{ \ln g}} .$$
\end{itemize}
\end{citethm}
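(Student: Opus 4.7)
The plan is to treat the statement as a construction problem, decomposing it into an arithmetic/combinatorial counting step and a geometric systole-comparison step.

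First, translate the counting assertion (2) into combinatorics. A standard hexagonal tesselation of a closed genus-$g$ surface has $V$ vertices of valence $4$ and $F$ hexagonal faces, so by Euler's formula $F = 4(g-1)$, $E = 12(g-1)$, $V = 6(g-1)$. Since each edge of $\tau_g$ is bicoloured with colours matching across gluings, each of the $6(g-1)$ red (resp. blue) edges lies in exactly one red (resp. blue) cycle. If the tesselation is $k$-regular, each such cycle has length $kL$ with $L=\arcosh 2$, and the total number of curves is $12(g-1)/k$. Assertion (2) therefore reduces to producing, on infinitely many genera, standard $k$-regular tesselations with $k$ on the order of $\sqrt{\ln g\cdot \ln\ln\ln g}$.

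Second, I would build the family. Fix the $(2,3,6)$-triangle group $\Gamma_0 \subset \PSL(2,\R)$, whose orbifold quotient carries the canonical decomposition into copies of $\mathcal{H}$ and $\overline{\mathcal{H}}$, and choose a descending family of torsion-free finite-index subgroups $\Gamma_g \subset \Gamma_0$, for example principal congruence subgroups in an arithmetic realisation of $\Gamma_0$. The surfaces $S_g = \HP/\Gamma_g$ inherit standard hexagonal tesselations $\tau_g$, and the Buser--Sarnak lower bound for systoles of arithmetic surfaces forces the combinatorial girth of the $1$-skeleton to grow roughly like $\log g$. This leaves enough flexibility to select a subfamily in which every cycle of the $1$-skeleton has \emph{exactly} $k$ edges, with $k$ of the required order of growth; a Chebotarev or pigeonhole argument on the congruence levels controls the slow variation responsible for the $\sqrt{\ln\ln\ln g}$ factor.

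Third, and this is the decisive obstacle, I would prove that every closed geodesic of $S_g$ not already a curve of $\tau_g$ has length strictly greater than $kL$, so that the curves of $\tau_g$ form exactly $\Syst(S_g)$. A candidate competitor $\gamma$ lifts to $\HP$ and crosses a sequence of tiles; at each tile it either follows an edge of $\tau_g$ or cuts diagonally. The subtlety is that Lemma \ref{saccheri} shows a Saccheri diagonal of length $L'<2L$ can actually \emph{shorten} three consecutive edges, so diagonal shortcuts really do save length. The construction must therefore be designed to exclude, inside $\tau_g$, any combinatorial pattern that would realise such a shortcut by a closed geodesic of length $\le kL$: this is exactly the axioms (AX4) and (AX5) of the present paper, together with finer forbidden subconfigurations for longer possible shortcuts. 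The hardest step is showing that a subfamily of congruence covers simultaneously avoids \emph{all} forbidden patterns up to combinatorial length $k$; this is a graph-girth problem for the dual graph of $\tau_g$ under the no-shortcut constraints, and the precise rate at which such a simultaneous avoidance is possible is what forces the $\sqrt{\ln\ln\ln g}$ in the denominator and the explicit constant $57$ in the numerator.
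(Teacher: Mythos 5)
This statement is not proved in the present paper; it is imported verbatim from Theorem~25 of \cite{IM1}. The only internal evidence about how that proof goes is the sketch in Section~7.1: the surfaces arise as $\HP/H$ for suitable finite-index subgroups $H$ of the right-angled Coxeter group $W=\langle s_i\mid (s_is_{i+1})^2=1,\ i\in\Z/6\Z\rangle$ generated by the six side-reflections of the regular right-angled hexagon, with the systole property controlled by ``Criterion~18 of \cite{IM1}'' (cited but not stated). So what follows assesses your reconstruction against that framework.

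Your first step is correct and matches the paper: $F=4(g-1)$, $E=12(g-1)$, $V=6(g-1)$, and for a $k$-regular tesselation $\Card\Syst(S_g)=12(g-1)/k$, so assertion~(2) reduces to achieving $k$ of order $\sqrt{\ln g\cdot\ln\ln\ln g}$. This part of your reduction is sound.

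Your second step has a genuine gap, and it is decisive. The Buser--Sarnak bound gives $\Syst\gtrsim\tfrac43\log g$ for principal congruence covers of an arithmetic surface. Since every curve of the tesselation is a closed geodesic of length $kL$ with $L=\arcosh 2$ fixed, a congruence cover would force $k\gtrsim\log g$ and hence $\Card\Syst(S_g)\lesssim g/\log g$ --- a \emph{smaller} count than the theorem claims, which would make the $\sqrt{\ln\ln\ln g}$ factor pointless. Conversely, the theorem's rate $k\sim\sqrt{\ln g\cdot\ln\ln\ln g}$ makes the curves far shorter than any congruence cover's systole permits, so the family $S_g$ of \cite{IM1} cannot be the congruence family you propose. (The incompatibility cannot be repaired by ``selecting a subfamily'': the girth of the $1$-skeleton cannot be simultaneously $\gtrsim\log g$ and equal to $k\sim\sqrt{\log g}$.) A secondary slip: the relevant ambient group is the right-angled hexagon group $W$, not the $(2,3,6)$ triangle group --- the barycentric subdivision of the regular right-angled hexagon produces $(2,4,6)$ triangles, but more to the point the paper's subgroups $H$ live in $W$, not in a triangle group. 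Finally, your third step correctly names the systole comparison as the hardest obstacle, but it is left as a forbidden-subconfiguration/graph-girth problem with no mechanism specified; as written this is a restatement of the theorem rather than a proof, and in particular does not locate the source of the $\sqrt{\ln\ln\ln g}$ denominator or the constant~$57$.
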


\bigskip\noindent
{\it 6.4 Proof of Theorem 1}

\begin{Main1} 
There exists an infinite set $A$ of integers $g\geq 2$ such that

\centerline{$\codim\, \mathcal{P}_g <
{38\over \sqrt{\ln\ln\ln g}}\,\,
{g\over \sqrt{ \ln g}}$,}

\noindent for any $g\in A$.
\end{Main1} 

\begin{proof} Let $A$ be the set of the
of the  theorem of Subsection 6.3.
Let $g\in A$ and let  $\tau_g$ be the corresponding
tesselation. \\

By hypotheses, any curve $C$ of $\tau$ consists
of edges of the same index. By extension it will be
called the index of the curve. Let
${\mathcal C}$ be the set of all curves of index 
$3, 4,5$ or $6$. We claim that ${\mathcal C}$ fills the surface.

Let $P$ be a vertex at the intersection of 
two curves of index $1$ and $2$. Let
${\bf Q}$ be the union of the four hexagons
surrounding $P$. It turns out that 
${\bf Q}$  is a $12$-gon whose edges have indices
distinct from $1$ and $2$. It follows that
${\mathcal C}$ cuts the surface into these $12$-gons. \\

It is clear that 
$\Card\,{\mathcal C}=2/3\, \Card\,\Syst(S_g)$.
To finish the proof, it is enough to show that 
$\tau_g$ satisfies the hypothesis of Corollary \ref{cor2}.
 
We can assign the red colour to the edges 
of $\tau_g$, of index $1,2$ or $3$ and the blue colour to other edges.
Moreover, since all curves have the same 
length, the tesselation is $k$-regular for some $k$. The case $k=2$ was excluded from consideration
in \cite{IM1} so we have $k\geq 3$. In fact, the decoration implies that $k$ is  even \cite{IM1}, so 
we have $k\geq 4$. It follows that 
$\tau_g$ belongs to $\Tess(\mathcal{S},k)$ for some
$k\geq 4$.

It follows from Corollary \ref{cor2} that

\centerline{$\codim\, \mathcal{P}_g <
{38\over \sqrt{\ln\ln\ln g}}\,\,
{g\over \sqrt{ \ln g}}$.}
\end{proof}

\section{Examples}

\noindent Before \cite{FB23}, it was a challenging question to know if $\codim {\mathcal P}_g$ was less than $2g-1$. Since the bounds in \cite{FB23} are not explicit, it is still interesting to know
the smallest $g$ for which 
$\codim {\mathcal P}_g<2g-1$. We will describe our construction for $g=17$ and show that
$\codim {\mathcal P}_{17}<33$.

We will first  briefly explain the case $2k=2$ which was excluded from consideration
in order to avoid some specificity.

\bigskip\noindent
{\it 7.1 Standard $2k$-regular tesselations}

\noindent We will briefly explain 
the construction  of all standard $2k$-regular tesselations, following \cite{IM1}. 
Let ${\mathcal H}$ be a decorated right-angled regular hexagon
of the Poincar\'e half-plane $\HP$.
For each $i\in\Z/6\Z$,  
let $s_i$ be the reflection in the line $\Delta_i$
containing the side of index $i$ of ${\mathcal H}$.
The group $W$ generated by these reflections is a Coxeter group with presentation

$$\langle s_i\mid (s_is_{i+1})^2=1, \forall i\in\Z/6\Z\rangle.$$

\noindent   
By a  theorem of Poincar\'e, the
collection of hexagons 
$\{w.{\mathcal H}\}$ is the set of tiles of a tesselation of $\tau$ of $\HP$.

Let $W^+$ be the subgroup of index two consisting of  products of an even number of generators.
Let $k\geq 1$. Let $H$ be a subgroup of
$W$ satisfying

\begin{enumerate}

\item $H$ is a finite index subgroup of $W^+$,

\item $H^w\cap \langle s_i, s_{i+1}\rangle =\{1\}$,
and

\item $H^w\cap \langle s_i s_{i+1}\rangle =
\langle (s_i s_{i+1})^k\rangle$,

\end{enumerate} for any $i\in \Z/6\Z$ and $w\in W$,
where $H^w$ stands for $wHw^{-1}$.

\noindent Then $\HP/H$ is a closed oriented 
hyperbolic surface endowed with a $2k$-regular
standard tesselation. Conversely, any
such tesselated surface is  isometric to
 $\HP/H$, where $H$ satisfies the previous conditions, see \cite{IM1}, Theorem 12.
This leads to the question, only partially answered by 
Criterion 18 of \cite{IM1} - when the curves of the tesselation are the systoles of the surface?

\bigskip\noindent
{\it 7.2 Schmutz's genus two surface.}

\noindent
 The case $2k=2$ is  simple, but it has been excluded   because of its
 particularity. In fact, the three-holed sphere
 $\Sigma(2,\epsilon)$ is  equal to
 $\Pi(2,\epsilon)$. 
 
 There is only one subgroup $H$ of $W$ satisfying the previous three conditions, and we have
$W/H\simeq(\Z/2\Z)^2$. The corresponding surface
$S_2:=\HP/H$ is the genus $2$ surface tesselated by $4$ hexagons, see Figure 7. It has been proved in
\cite{SS} that the set ${\mathcal C}$ of the curves of the tesselation are the systoles. 

The curve $S_2$ has six points $P_i$,
for $i\in\Z/6\Z$, which are fixed by the 
hyperelleptic invotion. Denote by $C_{i,i+1}$
the  curve of ${\mathcal C}$ containing 
$P_i$ and $P_{i+1}$.

Tedious computations show that 
${\mathcal C}$ is the set of systoles at the point
$\sigma(\epsilon)$ for any
$\epsilon\in ]\pi/4,\pi/2$. The limit point
$\sigma(\pi/4)$ is the Bolza's surface 
with $12$-systoles \cite{Bol}. Let 
${\mathcal C}'$ be the six new systoles of 
$\sigma(\pi/4)$. Each of these systoles contains
the hyperelliptice point $P_i$ and $P_{i+2}$ for some $i$. For $\epsilon<\pi/4$,  ${\mathcal C}'$ is the set of systoles at the point $\sigma(\epsilon)$.
Since  ${\mathcal C}'$ does not fill,
$\sigma(\epsilon)$ is no more in ${\mathcal P}_g$ for $\epsilon<\pi/4$.

A similar analysis can be carried for
$\epsilon\geq \pi/2$. The limit points at $\pi/4$ and $3\pi/4$ are   Bolza's surface with  
distinct markings.

\begin{figure}[!thpb]
\centering
\includegraphics[width=0.6\textwidth]{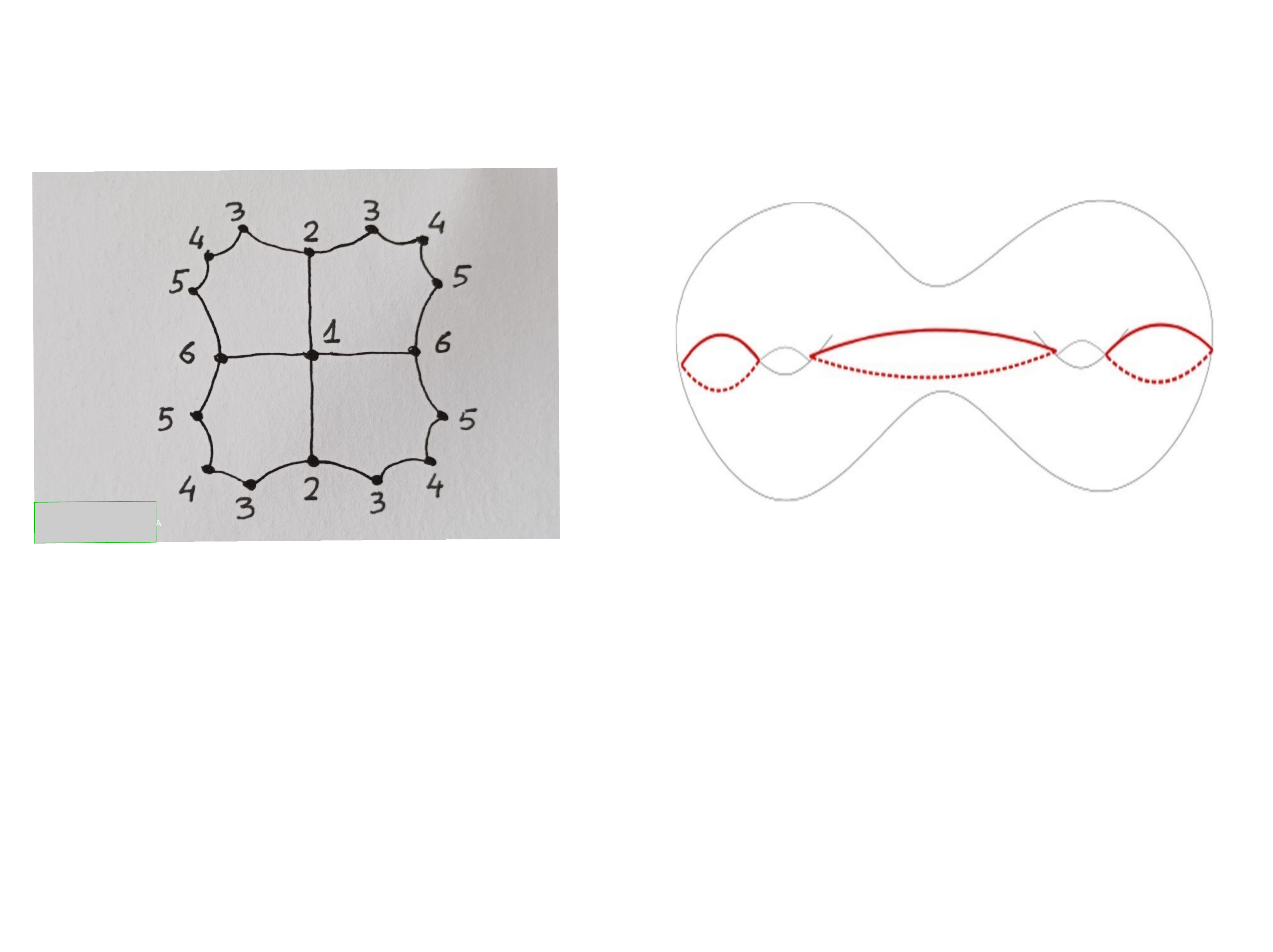}
\caption{Up to repetition, there are only six vertices on the left side of this figure, which are the  points  $P_i$ indexed by 
$1,2,\dots,6$. They are located 
on the $x$-axis of the figure on the right. The hyperelliptic involution is a $180$-degree rotation around this axis. Three systoles are located on the vertical plane and the other three are on the horizontal plane.}
\label{genus2example}
\end{figure}

 \bigskip\noindent
{\it 7.3 An exemple of genus $17$.}

\noindent When $2k=4$, the analysis is more complicated. 
We will  describe a surface of genus $17$ endowed with a $4$-regular tesselation.

Let $H\subset W$ be the normal subgroup generated by the elements $(s_i s_{i+3})^2$ and set
 $S_{17}=\HP/H$. The quotient $\Gamma:= W/H$ is isomorphicto $(\Z/2\Z)^6$, hence
$S_{17}$ is tesselated by $64$ hexagons. It follows that $S_{17}$ has genus $17$.

\begin{lemma}\label{17} The systoles of $S_{17}$ are exactly
the curves of the given tesselation.
\end{lemma}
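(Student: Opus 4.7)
The strategy is to adapt the approach of Criterion~18 of \cite{IM1}, which gives a combinatorial sufficient condition for the curves of a standard tesselation to coincide with the systoles. Since that criterion is only a partial answer in general, some additional input specific to $S_{17}$ is likely needed. The first step is to pin down the common length of the tesselation curves. The quotient $\Gamma = W/H \simeq (\Z/2\Z)^6$ acts on $S_{17}$ by isometries, transitively on tiles and on edges of a fixed colour, so every curve of the tesselation is a closed geodesic of length $kL$, where $L = \arcosh 2$ is the common edge length from Lemma~\ref{hexagon} at $\epsilon = \pi/2$ and $k$ is the number of edges on each curve. A direct combinatorial check, consistent with the Euler characteristic count $V - E + F = 96 - 192 + 64 = -32$ (confirming genus $17$), gives $k = 4$, so every tesselation curve has length $4\arcosh 2$.

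Next I would show that no closed geodesic on $S_{17}$ is shorter than $4\arcosh 2$, with equality attained only by tesselation curves. Any closed geodesic lifts to the translation axis of a hyperbolic element $h \in H$, and its length equals the translation length $\ell(h)$. Using the description of $H$ as the normal closure in $W$ of the elements $(s_i s_{i+3})^2$ together with the abelianization $W/H \simeq (\Z/2\Z)^6$, I would enumerate the $\Gamma$-orbits of conjugacy classes in $H$ whose translation length could a priori be at most $4\arcosh 2$. For each representative, the translation length is estimated from below either by exhibiting a Saccheri-quadrilateral shortcut, in the spirit of the proof of Lemma~\ref{axioms45} (which rests on $L' < 2L$ from Lemma~\ref{saccheri}), or by a direct trace computation via the identity $2\cosh(\ell(h)/2) = |\tr \rho(h)|$ applied in an explicit $\PSL(2,\R)$-representation of $W$ lifting the tesselation of $\HP$.

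The main obstacle is the combinatorial case analysis. Even after the $\Gamma$-symmetry collapses the enumeration, one must still rule out any closed geodesic that crosses each tile through a pair of non-adjacent sides, possibly running obliquely through a long sequence of tiles before closing up. In practice, the Saccheri bound $L' < 2L$ eliminates every ``obvious'' shortcut, so only a small handful of genuinely oblique candidates should survive and have to be compared to $4\arcosh 2$ by direct trigonometric computation, exactly in the style of Section~\ref{Pants}. As a safety net, the conclusion can be confirmed numerically, in the spirit of the Sage computation \cite{Calculation} that verified $\dim \mathcal{P}_2 = 3$.
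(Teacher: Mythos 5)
Your road map is plausible in spirit, but it is missing the key reduction that makes the argument finite. You propose to ``enumerate the $\Gamma$-orbits of conjugacy classes in $H$ whose translation length could a priori be at most $4\arcosh 2$,'' but you do not say how to bound this enumeration: without some structural input, nothing stops a closed geodesic from running obliquely through arbitrarily many tiles (you yourself identify this as the main obstacle but leave it unresolved). The paper's proof supplies exactly this missing ingredient. It maps each closed geodesic $\gamma$ to a loop $\omega(\gamma)$ in the Cayley graph $\Cay\,\Gamma$ by recording the sequence of edge-indices crossed, observes that $\Gamma \simeq (\Z/2\Z)^6$ forces both the red letter count $l_R(\omega(\gamma))$ and the blue letter count $l_B(\omega(\gamma))$ to be even, and then invokes Lemma~17 of \cite{IM1}, which states that any geodesic with $l(\omega(\gamma)) > 4$ (and not a tesselation curve) is strictly longer than $4L$. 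This collapses the problem to the single case $l(\omega(\gamma)) = 4$, where the word is $s_i s_j s_i s_j$ with $i \neq j$.

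Your proposal also reaches for the wrong tools in the residual case. You suggest the Saccheri bound $L' < 2L$ of Lemma~\ref{saccheri}; but that is what the paper uses for the separate verification of axiom (AX5) in Lemma~\ref{axioms45}, not here. For the genuinely oblique candidate $\omega(\gamma) = s_i s_{i+3} s_i s_{i+3}$ (the only $l=4$ word that is neither a tesselation curve nor null-homotopic), the paper instead cuts the hexagon into two right-angled pentagons by the arc joining the midpoints of the opposite sides $i$ and $i+3$, and computes via the pentagon formula (Theorem~3.5.10 of \cite{Rat}) that this arc has length $\arcosh 3$, hence $l(\gamma) = 4\arcosh 3 > 4\arcosh 2$. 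So your proposal correctly identifies the flavour of the computation (trigonometry in the tile, Cayley-graph combinatorics) and gets the Euler characteristic, genus, and systole length right, but it omits both the Cayley-word parity argument and the appeal to Lemma~17 of \cite{IM1} that together make the case analysis finite and tractable. The fallback to a numerical check is not a substitute for that reduction.
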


\begin{proof} This specific example
does not fully satisfies the hypotheses of
criterion 18 of \cite{IM1}, so we will briefly explain the proof.

The group $\Gamma$ is given with $6$ generators,
and its Caley graph $\Cay\,\Gamma$ is the one-skeleton of a 
$6$-dimensional cube.
There is an embedding of $\Cay\,\Gamma$
in $S_{17}$. The  vertices are  the centers of the hexagons  and the edges are the geodeosic arcs
connecting two vertices belonging to two adjacent faces and crossing their
common edge.

A loop in  $\Cay\,\Gamma$ is a word $w$ on the letters
$(s_i)_{i\in\Z/6\Z}$ representing $1$ in $\Gamma$.
The letters $s_1,s_3$ and $s_5$ are called the red letters, and the other three are called the blue letters.
For any word $w$, let $l_R(w)$, resp.   $l_B(w)$, be the number of occurences of red letters, resp.
of blue letters. Also set 
$l(w)=l_R(w)+l_B(w)$.

As in Lemma 14 of \cite{IM1}, any closed geodesic $\gamma$ is freely homotopic to a loop
$\omega(\gamma)$  in $\Cay\,\Gamma$. 
Indeed if $\gamma$ crosses sucessively some
edges of index $i_1$, $i_2$, $\dots i_n$
then $\omega(\gamma)$ is the word 
$s_{i_1}s_{i_2}\dots s_{i_n}$. If at some point
$\gamma$ crosses a vertex at the intersection 
of two edges of indices $i$ and $i+1$, the previous definition is ambiguous. By convention, 
we will consider that $\gamma$ crosses first an edge of index $i$ and then an edge of index $i+1$. 

We claim that the systoles of 
$S_{17}$ are the curves of the tesselation,
which have length $4L$, where $L=\arcosh\,2$.
Let $\gamma$ be a closed geodesic. Note that
$l_R(\omega(\gamma)$ and  
$l_B(\omega(\gamma)$ are even. 

First assume that $l(\omega(\gamma)>4$. We have
$l_R(\omega(\gamma)\geq 4$ or 
$l_B(\omega(\gamma)\geq 4$ and
$\gamma$ is not a curve of the tesselation. Therefore
$\gamma$ has length bigger that $4L$ by
Lemma 17 of \cite{IM1}.

Next assume that $l(\omega(\gamma)\leq 4$
It is obvious that $l(\omega(\gamma)$ is bigger than $2$, so we have  $l(\omega(\gamma))=4$. 

Note that
$\omega(\gamma)$ cannot contains two identical
consecutive letters, so $\omega(\gamma)=s_i s_js_is_j$ for some $i\neq j$.  Note also that the words $s_is_{i+1}s_is_{i+i}$ are null-homotopic in
$S_{17}$.
If $\omega(\gamma)=s_i s_{i+2}s_is_{i+2}$, then
$\gamma$ is a curve of the tesselation.
If $\omega(\gamma)=s_is_{i+3}s_i s_{i+3}$,
then $\gamma$ is a concatenation of four arcs
which connects the middles of a side of index $i$ to a side of index $i+3$. If $c$ is one of these arcs, it cut an  hexagon into two right-angled pentagons.
By the formula of Theorem 3.5.10 of \cite{Rat},
we have $l(c)=\arcosh 3$, therefore
$l(\gamma)=4 \arcosh 3$ is bigger than $4L$.
Since $\gamma$ is defined
up to an orientation, we have treated all cases where $l(\omega(\gamma))=4$.
\end{proof}

The next lemma shows $\codim\,{\mathcal P}_g<2g-1$
for $g=17$.

\begin{lemma} We have
$\codim\,{\mathcal P}_{17}<33$.
\end{lemma}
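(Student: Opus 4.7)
The plan is to apply Corollary~\ref{cor2} to $S_{17}$ with a suitably chosen filling subfamily $\mathcal{C}$ of the curves of its tesselation $\tau$. By Lemma~\ref{17}, the systoles of $S_{17}=S_\tau(\pi/2)$ are exactly the curves of $\tau$; moreover $\tau$ is a standard $k$-regular tesselation with $k=4\geq 3$ (each curve has length $4L$), so $\tau\in\Tess(\mathcal{S},k)$ and the hypothesis of Corollary~\ref{cor2} is satisfied.

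A direct enumeration yields the necessary numerical data. Since $S_{17}$ is tiled by $64$ hexagons, it has $V=96$ vertices and $E=192$ edges. For each index $i\in\Z/6\Z$ there are $192/6=32$ edges of index $i$, and each curve of index $i$ consists of $k=4$ edges; hence there are exactly $8$ curves of each index, and $48$ curves in total.

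Following the argument used in Section~6.4 for Theorem~1, I would take $\mathcal{C}$ to be the set of curves of indices $3,4,5,6$, so that $\Card\,\mathcal{C}=32$. The one point to verify is that $\mathcal{C}$ fills $S_{17}$: at every vertex $P$ where a curve of index $1$ crosses a curve of index $2$, the union of the four hexagons containing $P$ is a convex $12$-gon whose boundary uses only edges of indices $3,4,5,6$. Since every hexagon has a unique corner between its sides of indices $1$ and $2$, these $12$-gons partition the $64$ tiles into $16$ disjoint pieces, and the complement of $\mathcal{C}$ in $S_{17}$ is therefore a disjoint union of $16$ open discs.

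Corollary~\ref{cor2} then gives $\codim\,\Sys(\mathcal{C})<\Card\,\mathcal{C}=32$. Since $\mathcal{C}$ fills, $\Sys(\mathcal{C})\subseteq\mathcal{P}_{17}$, and so
$$\codim\,\mathcal{P}_{17}\leq\codim\,\Sys(\mathcal{C})<32<33,$$
as required. The only step with any real content is checking that $\mathcal{C}$ fills; all other ingredients are direct applications of results already proved. The one cautionary item is ensuring that $(1,2)$-vertices actually occur on $S_{17}$, but this is immediate from the explicit Coxeter description of $\tau$, where such vertices form a single $\Gamma$-orbit.
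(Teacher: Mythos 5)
Your proposal is correct and takes essentially the same route as the paper: choose $\mathcal{C}$ to be the $32$ curves of indices $3,4,5,6$, observe that $\mathcal{C}$ fills because the complement is a union of polygons centered at the $(1,2)$-vertices, and invoke Corollary~\ref{cor2} to get $\codim\,\mathcal{P}_{17}\le\codim\,\Sys(\mathcal{C})<\Card\,\mathcal{C}=32$. You supply more bookkeeping than the paper (the $V=96$, $E=192$ count, the $16$ twelve-gons, and the explicit step $\Sys(\mathcal{C})\subseteq\mathcal{P}_{17}$), but the argument is the same.
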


\begin{proof} The surface $S_{17}$ has $48$
curves. Let ${\mathcal C}$ be the set of all curves of index 3,4,5, or 6. As in the proof of
Theorem 1, the set ${\mathcal C}$ fills the surface. Since $\Card {\mathcal C}=32$,
we have 
 $\codim\,{\mathcal P}_{17}<32$ by Corollary \ref{cor2}.
 \end{proof}

 \smallskip\noindent
{\it Remark.} 
The set ${\mathcal C}$ of the proof is not a minimal filling subset. Intuitive computations suggest that the minimal filling subsets have 
cardinality $25$, and that
$\codim\,{\mathcal P}_{17}=24$.

 \bibliography{EstimatingBib}

\begin{thebibliography}{10}

\bibitem{Bol}
O.~Bolza.
\newblock On binary sextics with linear transformations between themselves.
\newblock {\em Amer. J. Math.}, 10:47–70, 1888.

\bibitem{FB20}
M.~Fortier Bourque.
\newblock Hyperbolic surfaces with sublinearly many systoles that fill.
\newblock {\em Commentarii Mathematici Helvetici}, 95:515--534, 2020.

\bibitem{FB23}
M.~Fortier Bourque.
\newblock The dimension of {T}hurston's spine.
\newblock {\em Int. Math. Res. Not.}, pages 1--10, 2023.

\bibitem{BV}
M.~Bridson and K.~Vogtmann.
\newblock Automorphism groups of free groups, surface groups and free abelian
  groups.
\newblock In {\em Problems on mapping class groups and related topics,},
  volume~74 of {\em Proceedings of Symposia in Pure Mathematics}, pages
  301--316. American Mathematical Society, 2006.

\bibitem{Bu}
P.~Buser.
\newblock {\em Geometry and {S}pectra of {C}ompact {R}iemann {S}urfaces},
  volume 106 of {\em {P}rogress in {M}athematics}.
\newblock Birkh{\"a}user, 1992.

\bibitem{FM}
B.~Farb and D.~Margalit.
\newblock {\em A primer on mapping class groups}, volume~49 of {\em Princeton
  Mathematical Series}.
\newblock Princeton University Press, Princeton, NJ, 2012.

\bibitem{H}
J.~Harer.
\newblock The virtual cohomological dimension of the mapping class group of an
  orientable surface.
\newblock {\em Inventiones Mathematicae}, 84:157--176, 1986.

\bibitem{Calculation}
I.~Irmer.
\newblock An explicit deformation retraction of the genus 2 moduli space.
\newblock To appear.

\bibitem{Me}
I.~Irmer.
\newblock The differential topology of the {T}hurston spine.
\newblock arXiv:2211.03429, 2022.

\bibitem{IM1}
I.~Irmer and O.~Mathieu.
\newblock Small systole sets and {C}oxeter groups.
\newblock Preprint, 2023.

\bibitem{LiJi}
Lizhen Ji.
\newblock Well-rounded equivariant deformation retracts of {T}eichm\"uller
  spaces.
\newblock {\em L’Enseignement Mathématique}, 60(02):109--129, 2013.

\bibitem{L1}
S.~Lojasiewicz.
\newblock Triangulation of semi-analytic sets.
\newblock {\em Annali della Scuola Normale Superiore di Pisa - Classe di
  Scienze}, 18(4):449--474, 1964.

\bibitem{P88}
R.~C. Penner.
\newblock A construction of pseudo-{A}nosov homeomorphisms.
\newblock {\em Trans. Am. Soc.}, 310(1):179--197, 1988.

\bibitem{Rat}
J.~Ratcliffe.
\newblock {\em Foundations of hyperbolic manifolds}, volume 149 of {\em
  Graduate Texts in Mathematics}.
\newblock Springer, New York, second edition, 2006.

\bibitem{S}
B.~Sanki.
\newblock Systolic fillings of surfaces.
\newblock {\em Bulletin of the {A}ustralian {M}athematical {S}ociety},
  98:502--511, 2018.

\bibitem{SS}
P.~Schmutz Schaller.
\newblock Systoles and topological {M}orse functions for {R}iemann surfaces.
\newblock {\em Journal of Differential Geometry}, 52(3):407--452, 1999.

\bibitem{T85}
W.~Thurston.
\newblock A spine for {T}eichm\"uller space.
\newblock Preprint, 1985.

\bibitem{W}
S.~Wolpert.
\newblock An elementary formula for the {F}enchel-{N}ielsen twist.
\newblock {\em Comment. Math. Helv.}, 56(1):132--135, 1981.

\bibitem{wolpert83}
S.~Wolpert.
\newblock On the symplectic geometry of deformations of hyperbolic spaces.
\newblock {\em The Annals of Mathematics}, 117:207--234, 03 1983.

\end{thebibliography}
\bibliographystyle{plain}
\end{document}